\newtheorem{thm}{Theorem}[section]
\newtheorem*{thm*}{Theorem}
\newtheorem*{conj*}{Conjecture}
\newtheorem{cor}[thm]{Corollary}
\newtheorem{lem}[thm]{Lemma}
\newtheorem{prop}[thm]{Proposition}
\theoremstyle{remark}
\newtheorem{rem}[thm]{Remark}
\theoremstyle{definition}
\newtheorem{defn}[thm]{Definition}
\newcounter{claim}[thm]
\newtheorem{problem}[thm]{Problem}
\DeclareMathOperator{\Alt}{Alt}
\DeclareMathOperator{\Sym}{Sym}
\DeclareMathOperator{\Sub}{Sub}
\DeclareMathOperator{\Aut}{Aut}
\DeclareMathOperator{\Out}{Out}
\newcommand{\Inn}{\mathrm{Inn}}
\newcommand{\soc}{\mathrm{soc}}
\newcommand{\AGL}{\mathrm{AGL}}
\newcommand{\GL}{\mathrm{GL}}
\newcommand{\PGL}{\mathrm{PGL}}
\newcommand{\SL}{\mathrm{SL}}
\newcommand{\PSL}{\mathrm{PSL}}
\newcommand{\Sp}{\mathrm{Sp}}
\newcommand{\PSU}{\mathrm{PSU}}
\newcommand{\qsr}{quasi-semiregular}
\newcommand{\qsre}{quasi-semiregular element}
\newcommand{\qsres}{quasi-semiregular elements}
\newcommand{\la}{\langle}
\newcommand{\ra}{\rangle}
\title{Finite permutation groups with quasi-semiregular elements}
\author{Michael Giudici, Luke Morgan and  Cheryl E.~Praeger\\Department of Mathematics and Statistics\\
The University of Western Australia, Perth, Australia
}
\begin{document}

 \maketitle
 \begin{abstract}
 A quasi-semiregular element in a permutation group is an element that has a unique fixed point and acts semiregularly on the remaining points. Such elements were first studied in the context of automorphisms of graphs and occur naturally in many families of permutation groups, such as Frobenius and Zassenhaus groups. They also arise in the context of groups with a strongly $p$-embedded subgroup.
 
 We investigate the question of which finite permutation groups contain quasi-semiregular elements, with particular attention to the primitive permutation groups. We determine  the O'Nan-Scott types of primitive groups that can contain {\qsres} and reduce the question to the affine and almost simple cases. In the almost simple case, we obtain a complete classification when the socle is alternating  or  sporadic.

 \medskip

\noindent{\bf Keywords:} semiregular, quasi-semiregular, derangements, primitive groups\\
\noindent{\bf 2010 Mathematics Subject Classification:} 20B05, 20B15

 \end{abstract}

\section{Introduction}
 
A   permutation group $G$ on a set $\Omega$ is \emph{semiregular} if  the only element of $G$ that fixes a point is the identity. A nonidentity permutation $g\in \Sym(\Omega)$ is   \emph{semiregular} if $\langle g \rangle$ is semiregular. In particular, a semiregular permutation has all of its cycles of the same length. 

\medskip

\noindent {\bf 1.1 Semiregular automorphisms in graph theory.} \quad Semiregular permutations have appeared in many applications of group theory (see  \cite{PabloSurvey} for a good survey). Notably in graph theory, semiregular elements have been well investigated, possibly due to Frucht's method of compact descriptions of graphs \cite{Frucht}.  Maru\v{s}i\v{c} \cite[Problem 2.4]{MarusicPolyCirc} asked if all vertex-transitive graphs admit a semiregular automorphism (see also \cite{JordanPC,Leighton}). This was generalised by Klin \cite{Klin} to what has become known as the Polycirculant Conjecture: `Every non-trivial finite transitive $2$-closed permutation group contains a semiregular element' \cite{7author}. (In particular, the automorphism group of a graph is a $2$-closed permutation group.) Since nonidentity powers of semiregular elements are also semiregular, a group contains a semiregular element if and only if it contains one of prime order. Such an element is therefore a \emph{derangement}, that is, an element without fixed points. A theorem of Jordan from 1872  \cite{Jordan}  shows that every finite transitive permutation group contains a {derangement}.  As such, the study of derangements has always been a central part of permutation group theory. Questions concerning the proportion of derangements in permutation groups and the possible orders of derangements have motivated much work in the field. See for example \cite{Serre}. Further,  connections to graph theory, number theory and algebra have guaranteed the longevity of this topic. Although Jordan's theorem guarantees the existence of a derangement, there is no information about the order of a derangement. A deep theorem of Fein, Kantor and Schacher \cite{FKS}, using the Classification of the Finite Simple Groups, shows that every finite transitive permutation group has a derangement of \emph{prime power order}. However, there do exist transitive groups without derangements of \emph{prime} order. Such groups are termed \emph{elusive}; one example is the Mathieu group $M_{11}$ acting on $12$ points (see \cite[\S2]{7author} for this and other examples). A natural goal is to classify the elusive permutation groups, that is, groups in which every element of prime order has a fixed point. This would also form a plan of attack on the Polycirculant Conjecture -- classify all elusive permutation groups, and then check which are $2$-closed.

\medskip

\noindent {\bf 1.2 Quasi-semiregular automorphisms in graph theory} \quad When studying vertex-transitive graphs, the `local to global' approach has often yielded strong results. Local here refers to a single vertex, and the method uses only knowledge of vertex-stabilisers to derive results on the whole graph or the full automorphism group. A \emph{local} version of semiregularity is the following. A permutation group $G$   is \emph{\qsr} if $G$ has a unique fixed point   and acts semiregularly on the remaining points. A permutation $g\in G$ is \emph{\qsr} if $\langle g \rangle$  is {\qsr}.  If the set acted upon has $n$ points, then as a permutation, a {\qsre} of order $m$ has cycle type $1^1m^{(n-1)/m}$.

Quasi-semiregular elements were first defined by Kutnar, Malni\v{c}, Mart\'{i}nez and Maru\v{s}i\v{c}  \cite{KMMM2013} where they were used to construct a family of graphs that are generalisations of Cayley graphs. Let $\Gamma$ be a graph and let $G\leqslant \Aut(\Gamma)$. We say that $\Gamma$ is an $m$-Cayley graph over $G$ if $G$ acts semiregularly on $V\Gamma$ with $m$-orbits. (So a $1$-Cayley graph is a Cayley graph.)  A graph $\Gamma$ is defined in \cite{KMMM2013} to be a quasi $m$-Cayley graph over a group $G  \leqslant \Aut(\Gamma)$ if $G$ fixes a unique vertex, $\infty$ say, and $G$ acts semiregularly on $V\Gamma\setminus\{\infty\}$ with $m$-orbits. (Note that every nonidentity element of $G$ is  therefore quasi-semiregular.) The authors of \cite{KMMM2013} were interested in finding  families of quasi $m$-Cayley graphs with ``good symmetry properties" and in particular   new strongly regular graphs. The idea has been validated by Williford \cite{Williford} who recently showed that a strongly regular graph with parameters $(65, 32, 15, 16)$ found by Gritsenko \cite{Gritsenko} in 2021 is a quasi $2$-Cayley  graph. Following this discovery, further examples have now been found by Martin and Williford \cite{Martin}.    We also note that under the name `$1$-rotational difference families', {\qsres} have been used to construct designs with various parameters, see \cite[\S16.6]{HCD} and \cite{Burattietal2025}.

 Recall that the Polycirculant Conjecture asks if every vertex-transitive graph has a semiregular automorphism.
The corresponding question `does every vertex-transitive graph admit a {\qsr} automorphism?' is readily seen to have a negative answer: let $\Gamma=C_{2n}$, then each non-trivial element of $\Aut(\Gamma)\cong D_{2n}$  fixes either  $0$ or $2$ points.  Investigations suggest that arc-transitive graphs admitting {\qsr} automorphisms seem to be rather rare. The only  arc-transitive cubic graphs admitting a {\qsr} automorphism are  $K_4$, the Petersen graph and the Coxeter graph   \cite[Theorem 1.1]{FengHujetal}. 
(Note that the order of a {\qsr} automorphism must divide the valency of the graph -- the neighbourhood of a vertex fixed by the automorphism is a union of its cycles.)
Also, if $\Gamma$ is an arc-transitive $4$-valent graph admitting a {\qsr} automorphism, then it is shown in \cite[Theorem 1.1]{FengHujetal} and \cite{YinFeng2021} that $\Gamma$ is a Cayley graph over an abelian group of odd order (and there exist infinitely many examples). In the prime valent case, \cite[Theorem 1.1]{Yinetal2023} shows that a connected arc-transitive graph $\Gamma$ of valency $p\geqslant 5$ admitting a {\qsr} automorphism is either a Cayley graph over a $2$-group, or, a normal cover of a `basic' graph $\overline{\Gamma}$.
For the $5$-valent case, \cite[Theorem 1.2]{Yinetal2023} shows that there are exactly eleven graphs occurring as `basic' graphs. One of the basic graphs is $K_6$, and infinitely many covers are constructed   which are quasi Cayley graphs, \cite[Theorem 1.3]{Yinetal2023}. 
A classification of all vertex-transitive graphs admitting a {\qsr} automorphism is called for in \cite[Problem A]{Yinetal2023}.
Hujdurovi\'{c} \cite[Theorem 1.1--1.3]{Hujdurovic2013} has determined which circulants  admit {\qsr} automorphisms with few orbits; some famous examples are  complete graphs and  Paley graphs. Motivated by the progress on {\qsres} acting on graphs, we aim to make a general description of the kinds of transitive permutation groups that contain {\qsres}.  

\medskip
\noindent {\bf 1.3 Quasi-semiregular elements in permutation groups} \quad 
 It turns out that {\qsres} are present in a rich collection of permutation groups, for example, in Frobenius  and Zassenhaus groups. {  We also find examples  in abstract  group theory. Several families arise from strongly $p$-embedded subgroups and the new concept of subnormalisers yields examples, see Section~\ref{sec:egs-gp theory}.}
 We now turn to general permutation groups. 
Let $G \leqslant \Sym(\Omega)$ be a transitive permutation group and suppose $x\in G$ is a {\qsre} with unique fixed point $\alpha \in \Omega$. 
{  In Lemma~\ref{lem:red to prim} we prove that $x$ induces a {\qsr} permutation on each system of imprimitivity preserved by $G$.}
Hence it makes sense to first consider which \emph{primitive} permutation groups contain {\qsres}. 

The finite primitive permutation groups are classified according to the O'Nan-Scott Theorem. Following the division in \cite{PraegerPrimInc}, there are eight O'Nan-Scott types;  HA (holomorph of an abelian group), AS (almost simple), HS (holomorph of nonabelian simple group), HC (holomorph of a nonabelian composite group), SD (simple diagonal), CD (compound diagonal), PA (product action of AS), TW (twisted wreath). See \cite[Section 3]{PraegerPrimInc} for further description of the types and discussion.
In Section~\ref{sec: reduction} we establish the following result.

\begin{thm}
\label{introthm:prims}
    Suppose that $G$ is a finite primitive permutation group. If $G$ contains a {\qsre}, then $G$ is of type HA, AS, PA, SD or CD.  Furthermore:
    \begin{enumerate}[(a)]
    \item if $G$ is a maximal group of  type SD with $\soc(G)=T^k$, then $G$ contains {\qsres} if and only if $k$ is prime and $|T|$ is coprime to $k$;
    \item if $G$ is a maximal product action  group $H \wr \Sym(\ell)$ (of type PA or CD), then $G$ contains {\qsres} in its action on $\Delta^\ell$ if and  only if $H$ (of type AS or SD) contains {\qsres} in its action on $\Delta$.
    \end{enumerate} 
 \end{thm}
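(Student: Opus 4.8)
The plan is to handle the three assertions through one common reduction followed by a structural case analysis. The reduction underlying everything is that every nonidentity power of a {\qsre} $x$ is again a {\qsre}: since $\la x\ra$ acts semiregularly off the fixed point $\alpha$, each point $\neq\alpha$ has trivial $\la x\ra$-stabiliser, so no nonidentity power of $x$ gains a second fixed point. Hence I may always assume $x$ has prime order $p$, and for such $x$ the condition ``{\qsr}'' is equivalent to ``$x$ has a unique fixed point'', because $|x|=p$ forces every nonfixed point into a $p$-cycle.

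For the classification statement I would rule out HS, HC and TW by noting that each has a regular, nonabelian minimal normal subgroup $N\cong T^k$ with $T$ nonabelian simple. Identifying $\Omega$ with $N$ so that $\alpha\leftrightarrow 1$, the stabiliser $G_\alpha$ acts on $N$ by conjugation, so a prime-order {\qsre} $x\in G_\alpha$ induces an automorphism $\theta\colon n\mapsto n^x$ of $N$ of order dividing $p$, whose fixed-point set is $\C_N(x)$. Quasi-semiregularity gives $\C_N(x)=1$, so $\theta$ is a fixed-point-free automorphism of prime order and $N$ is nilpotent by Thompson's theorem, contradicting $N\cong T^k$ nonabelian. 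This excludes exactly HS, HC, TW and leaves HA, AS, SD, CD, PA.

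For (a), write the stabiliser of the diagonal point as $G_\alpha\cong\Aut(T)\times\Sym(k)$ and conjugate a prime-order {\qsre} to $x=(\psi,\sigma)$. Realising $\Omega$ as $k$-tuples over $T$ modulo the left diagonal, a point $[n_1,\dots,n_k]$ is fixed precisely when $\psi(n_j)=n_{\sigma(j)}d$ for all $j$ and some $d\in T$, and counting representatives gives $|\mathrm{Fix}(x)|=\tfrac1{|T|}\sum_{d\in T}N(d)$ with $N(d)=\prod_{C}N_C(d)$ factoring over the cycles $C$ of $\sigma$. If $\sigma=1$ this equals $|\C_T(\psi)|^{k-1}>1$ (a simple group has no fixed-point-free automorphism of prime order), so $\sigma\neq1$. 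Evaluating the $d=1$ term gives $N(1)=|T|^{m}|\C_T(\psi)|^{f}$, where $m\ge1$ and $f$ count the $p$-cycles and fixed coordinates of $\sigma$; a single fixed point forces $\sum_d N(d)=|T|$ and hence $N(1)\le|T|$, which yields $m=1$, $f=0$. Thus $\sigma$ is a single $k$-cycle and $k=p$ is prime. It then remains to see that $|\mathrm{Fix}(x)|$ equals the number of solutions of the twisted-norm equation $\nu_\psi(d):=d\,\psi(d)\cdots\psi^{p-1}(d)=1$; recognising $\nu_\psi(d)=(d\psi)^p$ inside $T\rtimes\la\psi\ra$, the $T$-conjugates of $\psi$, all of order $p$ and lying in the coset $T\psi$, produce $|T:\C_T(\psi)|$ distinct solutions $d$, so a unique fixed point forces $\psi=1$. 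With $\psi=1$ we have $\nu_1(d)=d^p$, whose only root is $1$ exactly when $p\nmid|T|$, i.e. $\gcd(|T|,k)=1$; conversely, since maximality makes the full $\Sym(k)$ available, the $k$-cycle is then a {\qsre}. I expect this fixed-point count, and in particular the twisted-norm identity, to be the main obstacle.

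For (b), sufficiency is immediate: if $h$ is a {\qsre} of $H$ on $\Delta$ with fixed point $\delta_0$, the diagonal element $(h,\dots,h;1)$ of $H\wr\Sym(\ell)$ fixes only $(\delta_0,\dots,\delta_0)$ and, as $\la h\ra$ is semiregular off $\delta_0$, is quasi-semiregular on $\Delta^\ell$. For necessity, take a {\qsre} $x=(h_1,\dots,h_\ell;\tau)$ of prime order $p$. A point is fixed iff, on each $\tau$-cycle $C$, its seed coordinate is fixed by the cycle product $H_C$ of the relevant $h_i$, so $|\mathrm{Fix}(x)|=\prod_C|\mathrm{Fix}_\Delta(H_C)|$. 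For a $p$-cycle $C$ the product $H_C$ coincides with a coordinate of $x^p$, hence $H_C=1$; since $\tau\neq1$ would create such a cycle and a factor $|\mathrm{Fix}_\Delta(1)|=|\Delta|>1$, we conclude $\tau=1$. Then $\prod_i|\mathrm{Fix}_\Delta(h_i)|=1$ forces each $h_i$ to have a unique fixed point and none to be trivial, so every nonidentity $h_i$ is a prime-order element of $H$ with a unique fixed point, i.e. a {\qsre} of $H$ on $\Delta$, as required.
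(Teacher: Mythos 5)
Your proposal is correct and establishes all three assertions, but it is executed rather differently from the paper in several places, so a comparison is worthwhile. For excluding types HS, HC and TW the paper gives three separate arguments (an explicit fixed function $\hat f$ for TW, Thompson's theorem applied to $\Inn(T)\leqslant G_\alpha\leqslant \Aut(T)$ for HS, and a reduction of HC to HS through the product-action corollary), whereas you unify all three by noting that each such group has a regular nonabelian normal subgroup $N$, that the action of $G_\alpha$ on $\Omega$ is equivalent to its conjugation action on $N$, and that a prime-order {\qsre} would therefore induce a fixed-point-free automorphism of prime order on $N$, contradicting Thompson's nilpotency theorem; this is genuinely cleaner (one should just remark that $C_N(x)=1$ together with $N\neq 1$ forces the induced automorphism to be nontrivial, and note that under the eight-type division the ``swapped'' holomorph groups are classified as SD/CD, so HS and HC groups really do have regular minimal normal subgroups). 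For SD and PA the paper argues by exhibiting explicit second fixed cosets or points --- e.g.\ the coset built from a Thompson-fixed element $t$ when $\sigma$ stabilises a proper subset of coordinates, and the point $y=(\beta,\beta^{h_1},\ldots)$ in the wreath product --- while you argue by counting: $\tfrac{1}{|T|}\sum_d N(d)$ for SD and $\prod_C|\mathrm{Fix}_\Delta(H_C)|$ for PA, forcing every factor to equal $1$. The underlying identities are the same (the norm relation $\nu_\psi(d)=(d\psi)^p$, which the paper meets as $s^k=1$, and the cycle-product relation $h_{j_1}\cdots h_{j_p}=1$ extracted from $x^p=1$), but your counting versions make $k=p$ fall out of the single inequality $N(1)=|T|^m|C_T(\psi)|^f\leqslant|T|$ and replace the paper's construction of a second fixed coset for $\alpha\neq 1$ by the observation that the $|T:C_T(\psi)|$ conjugates of $\psi$ in the coset $T\psi$ each yield a solution of the norm equation. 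Both routes use the same external input, namely Thompson's theorem on fixed-point-free automorphisms of prime order.
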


We note that not all SD groups contain {\qsres}, see Remark~\ref{rem:noSDeg}. Theorem~\ref{introthm:prims}(b) shows that  the PA  and CD type primitive groups containing a {\qsre} are   understood   given complete knowledge for primitive groups of type AS and SD respectively (but see Remark~\ref{rem:CDeg}) for CD groups. Further, if an affine group $G\leqslant \AGL(d,q)$ contains an element $g$ that is conjugate to a nonidentity scalar in $\GL(d,q)$, then $g$ is {\qsr}. In addition, there exist affine groups containing {\qsres}  that are not (conjugates of) nonidentity scalars: $\mathrm{ASL}(2,2)\cong \Alt(4)$ is one such example.    In fact,  {\qsres} feature in all $\frac{3}{2}$-transitive affine groups (and in particular in all $2$-transitive affine groups). A nonregular transitive permutation group $G$ on a set $\Omega$ is $\frac{3}{2}$-transitive if  for any $\alpha\in \Omega$, all non-trivial $G_\alpha$-orbits have the same size. 
The theorem below depends on the classification of the finite $\frac{3}{2}$-transitive groups completed in 2019 (see~\cite{LPS2019}).  

\begin{thm}
\label{intro thm:3/2}
Every $\frac{3}{2}$-transitive  affine group contains a {\qsre}.
\end{thm}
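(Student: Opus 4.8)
The plan is to reduce the statement to the existence of a single fixed-point-free linear element, and then to run through the classification of $\frac{3}{2}$-transitive groups. Write the group as $G = V \rtimes G_0$, with $V = \mathbb{F}_p^d$ the regular translation group and $G_0\leqslant\GL(d,p)$ the stabiliser of $0$, and write a general element as $(v,A)\colon x\mapsto Ax+v$. The stabiliser of $0$ in $G$ is $G_0$, and its nontrivial orbits on $\Omega=V$ are precisely the orbits of $G_0$ on $V\setminus\{0\}$; hence $G$ being $\frac{3}{2}$-transitive is equivalent to $G_0$ being a \emph{$\frac{1}{2}$-transitive linear group}, meaning $G_0\neq1$ and all $G_0$-orbits on $V\setminus\{0\}$ share a common length. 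First I would record that $(v,A)$ is {\qsr} if and only if $\langle A\rangle$ is semiregular on $V\setminus\{0\}$: if $A-I$ is singular then $(v,A)$ has either no fixed point or more than one and so is not {\qsr}, while if $A-I$ is invertible then $(v,A)$ has a unique fixed point and is conjugate in $G$, by the translation $((A-I)^{-1}v,I)$, to $(0,A)$. Thus it suffices to find one \emph{fixed-point-free} element $A\in G_0$, that is, a nonidentity $A$ with $\langle A\rangle$ semiregular on $V\setminus\{0\}$ (equivalently, every eigenvalue of $A$ has multiplicative order $|A|$). In particular, by the observation recorded above that a nonidentity scalar is {\qsr}, any nonidentity scalar in $G_0$ already does the job.

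Next I would split into the three regimes distinguished by the classification of $\frac{1}{2}$-transitive linear groups in \cite{LPS2019}: (I) $G_0$ is semiregular on $V\setminus\{0\}$, i.e.\ $G$ is a Frobenius group; (II) $G_0$ is transitive on $V\setminus\{0\}$, i.e.\ $G$ is $2$-transitive and $G_0$ lies in Hering's list of transitive linear groups; and (III) the finitely many genuinely $\frac{1}{2}$-transitive configurations that are neither transitive nor semiregular. Case (I) is immediate, since a subgroup of a semiregular group is semiregular, so every nonidentity element of $G_0$ is fixed-point-free. I would also record a small reduction for later use: if $G_0=\langle A\rangle$ is cyclic and $\frac{1}{2}$-transitive with common orbit length $\ell$, then $A^{\ell}$ fixes $V\setminus\{0\}$ pointwise, forcing $A^{\ell}=1$ and hence $\ell=|A|$; thus a cyclic $\frac{1}{2}$-transitive group is automatically semiregular and falls under Case (I).

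Recall that $\frac{3}{2}$-transitive groups are primitive, so $G_0$ is irreducible on $V$, and for Cases (II) and (III) I would use two tools. If $G_0$ has nontrivial centre, then by Schur's lemma a nonidentity central element is a scalar over the commuting field $\mathbb{F}_{q'}=\mathrm{End}_{\mathbb{F}_p G_0}(V)$, and such a scalar is fixed-point-free; this settles every class in which $G_0$ contains a nonidentity scalar, in particular all $G_0\leqslant\Gamma\mathrm{L}(1,q)$ with nontrivial scalar subgroup and all the generic Hering classes with normal subgroup $\SL(n,q)$, $\Sp(2m,q)$ or $\mathrm{G}_2(q)$ over a field $\mathbb{F}_q$ with $q>2$ (where a nonidentity scalar exists). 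When instead the scalars are trivial—most notably over $\mathbb{F}_2$—I would descend into the normal classical or exceptional subgroup $S\trianglelefteq G_0$ and produce a fixed-point-free element acting without nonzero fixed vector on $V$: for $\SL(n,q)$, multiplication by a norm-one field element $\alpha\neq1$ of $\mathbb{F}_{q^n}$; and for $\Sp(2m,q)$ or $\mathrm{G}_2(q)$, an element of a cyclic maximal torus (of order $q^{m}+1$, respectively $q^{2}-q+1$) acting irreducibly. Finally, for $G_0\leqslant\Gamma\mathrm{L}(1,q)$ with trivial scalar subgroup, $G_0$ embeds in the cyclic Galois group of order $f$ and so is cyclic, whence by the reduction above it is semiregular and already covered by Case (I).

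The main obstacle is the finite list of sporadic examples in \cite{LPS2019}: the small-dimensional extraspecial-normaliser cases in dimensions $2,4,6$ and the exceptional transitive linear groups (those involving $\SL(2,5)$, $\SL(2,13)$, $\Alt(6)$, $\Alt(7)$ and the like), together with the genuinely intermediate groups of Case (III), where no uniform structural argument applies. For these I expect that in odd characteristic the scalar $-I$ is again fixed-point-free and resolves most cases, leaving only a short list over $\mathbb{F}_2$ to be dispatched by direct inspection—exhibiting in each an element whose characteristic polynomial has all roots of full multiplicative order, equivalently a semiregular cyclic subgroup, read off from the known conjugacy-class data. Combining the three cases yields a fixed-point-free element of $G_0$ in every instance, and hence a {\qsre} of $G$, proving the theorem.
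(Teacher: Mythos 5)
Your proposal is correct and follows essentially the same route as the paper: both rest on the Liebeck--Praeger--Saxl classification of $\frac{3}{2}$-transitive affine groups, dispatch the Frobenius and scalar-containing cases immediately, produce semiregular extension-field/torus elements inside the generic Hering classes $\SL(n,q)$, $\Sp(2m,q)$, $\mathrm{G}_2(q)$, and leave a finite residue of exceptional groups to direct verification (which the paper carries out by computer). The one caveat is that the residual soluble exceptional cases occur in degrees $3^2,5^2,7^2,11^2,17^2,3^4$ --- all odd characteristic, and these stabilisers need not contain $-I$ --- so your ``short list over $\mathbb{F}_2$'' mislabels where the remaining hand/machine check actually lives, though since your plan ends with case-by-case inspection of that finite list anyway this is a mislabel rather than a gap.
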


On the other hand, there are primitive affine groups with no {\qsres}: for example, the group $3^3 \rtimes \Alt(4)$ acting on $27$ points has no {\qsre}. An important open problem is the following:

\begin{problem}
    Classify the primitive affine groups having no  {\qsres}. 
\end{problem}

We are thus motivated to 
begin a program to classify the primitive almost simple groups $G$ with {\qsres}. In this paper we start  with groups having alternating or sporadic socles.  In forthcoming work, we will consider the groups of Lie type \cite{GMPlieqsres}.  Our first theorem considers the case where the socle of $G$ is an alternating group. (For a group $G$ with subgroup $H$, we denote by $[G:H]$ the set of right cosets of $H$ in $G$ on which $G$ acts transitively by right multiplication.)

\begin{thm}
\label{intro thm:alt or sym}
Let $G$ be a finite almost simple group with $\soc(G) = \Alt(n)$ and let $H$ be a maximal subgroup of $G$ not containing $\Alt(n)$. In the action of $G$ on $[G:H]$, $G$ contains a {\qsre} of prime order $p$ if and only if one of the following holds:
\begin{enumerate}[(1)]
\item $H$ and $p$ are as in one of the lines of Table~\ref{tab:infinite families};
\item $G=\Alt(n)$ and $H$, $p$ and $n$ are as in  one of the lines of Table~\ref{tab:exceps for altn};
\item $n=6$ and, either $p=3$, $|G:H| =10$ and $\Alt(6) \leqslant G \leqslant \Aut(\Alt(6))$; or, $p=5$, $|G:H|=36$ and $G$ contains an outer automorphism of $\Sym(6)$.
\end{enumerate}
\end{thm}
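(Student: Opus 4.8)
The plan is to convert quasi-semiregularity into a purely group-theoretic condition on the pair $(G,H)$ and then run through the maximal subgroups $H$. Since $x$ has prime order $p$, every non-trivial cycle of $x$ on $[G:H]$ has length $p$, so $x$ is quasi-semiregular exactly when it has a single fixed coset. A coset $Hg$ is fixed by $x$ if and only if $gxg^{-1}\in H$, and counting these gives
\[
\mathrm{fix}(x)=\sum_{i=1}^{t}|\cent{G}{x_i}:\cent{H}{x_i}|,
\]
where $x_1,\dots,x_t$ represent the $H$-classes comprising $x^G\cap H$. Every summand is a positive integer, so $\mathrm{fix}(x)=1$ if and only if $t=1$ and $\cent{G}{x_1}=\cent{H}{x_1}$. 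Hence, after conjugating $x$ into $H$, the element $x$ is quasi-semiregular on $[G:H]$ if and only if
\[
\cent{G}{x}\leqslant H \qquad\text{and}\qquad x^G\cap H=x^H .
\]
I would record the two necessary conditions this forces: $|G:H|\equiv 1\pmod p$ (as $x$ has $(|G:H|-1)/p$ cycles of length $p$) and $|\cent{G}{x}|\leqslant|H|$.

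Next I would bring in the O'Nan--Scott classification of the maximal subgroups of $\Sym(n)$ and $\Alt(n)$ (the reduction theorem of Liebeck, Praeger and Saxl), which places $H$ in one of the intransitive, imprimitive, affine, diagonal, product-action, or almost simple classes. The governing invariant is the cycle type $1^{f}p^{c}$ of $x$ on $\{1,\dots,n\}$, for which $|\cent{\Sym(n)}{x}|=f!\,p^{c}\,c!$. Comparing this with the order bounds for primitive groups not containing $\Alt(n)$ (for instance $|H|<4^{n}$, or Cameron's sharper bound in the non-affine types) shows that when $x$ has many $p$-cycles its centralizer cannot fit inside a primitive maximal subgroup, so $H$ must be intransitive or imprimitive; only when $x$ has a bounded number of (necessarily long) cycles does a primitive $H$ become possible, and then I would invoke the known classifications of primitive permutation groups containing an element with few cycles (for example an $n$-cycle, or an element with exactly two cycles).

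In the intransitive case $H=(\Sym(k)\times\Sym(n-k))\cap G$ acts on $k$-subsets, and $\cent{G}{x}\leqslant H$ forces the fixed points of $x$ and the set of its $p$-cycles each to lie wholly on one side of the partition into $k$ and $n-k$ points; combining this with the single-class requirement and the parity condition for membership in $\Alt(n)$ should isolate the admissible pairs $(H,p)$. The prototype is the natural action $k=1$, where $x$ of type $1^{1}p^{(n-1)/p}$ is quasi-semiregular whenever $p\mid n-1$. The imprimitive subgroups $H=(\Sym(k)\wr\Sym(m))\cap G$, acting on partitions into $m$ blocks of size $k$, are treated the same way, by tracking how the cycles of $x$ distribute among and permute the blocks. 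For the bounded-cycle regime the cleanest prototype is $n=p$ prime with $G=\Sym(p)$, $H=\AGL(1,p)$ and $x$ a $p$-cycle: then $\cent{G}{x}=\la x\ra\leqslant H$, the $p-1$ non-identity translations form a single $H$-class, and $|G:H|=(p-2)!\equiv 1\pmod p$ by Wilson's theorem, so $x$ is quasi-semiregular. These, and the analogous projective (Singer-cycle) configurations, are the source of the infinite families.

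I expect two sources of difficulty to dominate. The first is conjugacy-class fusion in passing from $\Sym(n)$ to $\Alt(n)$: an $\Sym(n)$-class can split into two $\Alt(n)$-classes, and whether the single-class condition $x^{G}\cap H=x^{H}$ persists turns on whether the relevant centralizers contain odd permutations --- already in the $\AGL(1,p)$ prototype the point-stabilizer of $\AGL(1,p)\cap\Alt(p)$ has two orbits on the non-identity translations, so the $\Alt(p)$ case must be decided separately. Each candidate family therefore has to be verified for both $G=\Sym(n)$ and $G=\Alt(n)$. The second, and the genuine technical heart, is the analysis of the primitive maximal subgroups in the bounded-cycle regime, where one must sift the few-cycle classification results and then resolve the finitely many sporadic degrees left over --- together with the anomalous degrees $10$ and $36$ at $n=6$ created by the outer automorphism of $\Sym(6)$ --- by explicit computation in \textsc{Magma} and GAP. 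Pinning down exactly which groups between $\Alt(6)$ and $\Aut(\Alt(6))$ realise the quasi-semiregular elements of orders $3$ and $5$, and reconciling these with the general criterion, is where I would expect to spend the most effort.
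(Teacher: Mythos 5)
Your reduction of quasi-semiregularity to the two conditions $\cent{G}{x}\leqslant H$ and $x^G\cap H=x^H$ is exactly the paper's Corollary~\ref{cor: qsr iff cent condition}, and your treatment of the intransitive and imprimitive maximal subgroups, the $\Sym(n)$/$\Alt(n)$ fusion issue, and the $n=6$ anomalies all track Propositions~\ref{prop:k-set stab} and~\ref{prop:k-part stab} and the surrounding discussion. The problem is the case of a primitive maximal subgroup $H$, which is where your argument has a genuine gap. You propose to bound the cycle type $1^fp^c$ of $x$ by comparing $|\cent{\Sym(n)}{x}|=f!\,p^c\,c!$ with order bounds for primitive groups, concluding that $x$ has a \emph{bounded} number of cycles and then invoking ``known classifications of primitive permutation groups containing an element with few cycles.'' Neither step holds up. Even with the sharpest general bound (Mar\'oti's $|H|\leqslant n^{1+\log_2 n}$ for primitive groups outside the product-action families), the inequality $f!\,p^c\,c!\leqslant |H|$ only forces $c$ and $f$ to be polylogarithmic in $n$, not bounded; and the large-order product-action maximal subgroups escape that bound entirely and would need a separate argument. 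Moreover, no classification of primitive groups containing an element with, say, $O((\log n)^2)$ cycles of prime length exists in the literature; the available results concern a single cycle (possibly with fixed points), which is not what your reduction delivers.

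The idea you are missing is the one the paper extracts from your own observation that $|G:H|\equiv 1\pmod p$: this congruence forces $H$ to contain a full Sylow $p$-subgroup of $G$ (Lemma~\ref{lem: qs and sylow}), and a Sylow $p$-subgroup of $\Sym(n)$ or $\Alt(n)$ contains an element acting as a single $p$-cycle on $[n]$. The relevant cycle type is therefore not that of the quasi-semiregular element $x$ itself but that of this auxiliary $p$-cycle $y\in H$, and Jordan's classical theorem (a primitive group of degree $n$ containing a $p$-cycle with $p\leqslant n-3$ contains $\Alt(n)$) immediately forces $n-2\leqslant p\leqslant n$. From there Jones's classification of primitive groups containing a cycle applies, and each candidate $H$ is settled by the normaliser criterion of Corollary~\ref{cor: normalisers equals implies qsr} (equivalent to your centraliser-and-fusion criterion since $\langle x\rangle$ is then a full Sylow $p$-subgroup). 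This Sylow-plus-Jordan route replaces your order-bound heuristic and is what makes the primitive case finite and checkable; without it, your outline does not close.
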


 \begin{table}
     \centering
     \begin{tabular}{c |c | c}
          action of $H$ & structure of $H$  &  conditions  on $p$\\ \hline 
                     $k$-subset stabiliser & $(\Sym(k) \times \Sym(n-k) )\cap G$ &    $ k < p , \quad  p\mid n-k$, and \\ & &    $n\equiv 1 \pmod{4} \text{ if } (p,G)=(2,\Alt(n)) $  \\ 
          partition stabiliser & $(\Sym(p) \wr \Sym(n/p) )\cap G$ &  $p$ odd, \quad  $p \mid n$, \quad $2p\leqslant n\leqslant p^2$  \\ 
                     affine & $ \AGL(1,p) \cap G$ &  $ n=p$, and\\  &&
 $p\neq 7,11,17,23\text{ if }G=\Alt(n)$    \\ 
          projective line &  $\PGL(2,p) \cap G$ &  $n=p+1$, \quad $p\geqslant 5$    
     \end{tabular}
     \caption{Possibilities for $H$  and $p$ occurring in Theorem~\ref{intro thm:alt or sym}(1).}
     \label{tab:infinite families}
 \end{table}

\begin{table}
    \centering
    \begin{tabular}{c|c|c|c|c}
        row & $n$ & $H$ &  $|G:H|$ & $p$\\ \hline
        $1$ & $7$ & $\PSL(3,2)$ & $120$ & $7$ \\
        $2$ &  $8$ & $\AGL(3,2)$ & $120$ & $7$ \\
        $3$ & $9$ & $\mathrm{P}\Gamma\mathrm{L}(2,8)$ & $280$  & $7$ \\
        $4$ &  $11$ & $M_{11}$& $362\,880$ & $11$ \\
        $5$ & $12$ & $ M_{12}$ & $362\,880$ &  $11$ \\
        $6$ & $17$ & $\mathrm{P}\Gamma\mathrm{L}(2,2^4)$ & $10\,897\,286\,400$ & $17$ \\
        $7$ & $23$ & $M_{23}$ & $1\,267\,136\,462\,592\,000$ & $23$  \\
        $8$ & $24$ & $M_{24}$ & $1\,267\,136\,462\,592\,000$ & $23$ \\ 
    \end{tabular}
    \caption{Possibilities for $n$, $H$, $|G:H|$  and $p$ in Theorem~\ref{intro thm:alt or sym}(2).}
    \label{tab:exceps for altn}
\end{table}

In the case that the socle of $G$ is a sporadic simple group, we prove:

\begin{thm}
\label{thmintro: sporadic}
    Let $G$ be an almost simple group with socle a sporadic simple group and let $H$ be a maximal subgroup of $G$. Then $G$ contains a {\qsre} of prime order $p$ in its action on $[G:H]$ if and only if there is a conjugacy class of elements of order $p$ in the row corresponding to $H$ in one of Tables~\ref{tab:M}--\ref{tab:spor3} in Section~\ref{sec:tables}.
\end{thm}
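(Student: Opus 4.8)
The plan is to convert quasi-semiregularity of an element of prime order into a single fixed-point count, and then to carry out that count for each relevant pair $(G,H)$ using the character-theoretic data recorded in the \textsc{Atlas} and its \textsc{Gap} character table library. First, observe that if $x\in G$ has prime order $p$ then every cycle of $x$ on $[G:H]$ has length $1$ or $p$; hence $x$ is quasi-semiregular exactly when it has a unique fixed point. Writing $\pi=1_H^G$ for the permutation character of the action on $[G:H]$, the number of points fixed by $x$ is
\[
\pi(x)=\frac{|\cent{G}{x}|\,|x^G\cap H|}{|H|},
\]
so $x$ is quasi-semiregular if and only if $\pi(x)=1$, equivalently $|x^G\cap H|=|H|/|\cent{G}{x}|$. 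Thus, for each $G$ with $\soc(G)$ sporadic, each maximal subgroup $H$, and each class $x^G$ of elements of prime order, the problem reduces to deciding whether $\pi(x)=1$.

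Second, I would cut down the candidate pairs $(H,p)$ using two necessary conditions. A quasi-semiregular element has a fixed point, so it is conjugate into $H$ and therefore $p\mid|H|$. Moreover its nonfixed points split into $p$-cycles, so $|G:H|-1$ is divisible by $p$; hence $|G:H|\equiv 1\pmod p$, which gives $p\nmid|G:H|$ and so $|H|_p=|G|_p$. Equivalently, $H$ must contain a Sylow $p$-subgroup of $G$. These two conditions alone discard the large majority of pairs: for each sporadic $G$ I would take the maximal subgroups from the \textsc{Atlas} (the list is now complete for every sporadic group, including the Monster), and retain only those $H$ containing a Sylow $p$-subgroup of $G$ for some prime $p\mid|H|$ with $|G:H|\equiv 1\pmod p$.

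Third, for each surviving pair I would evaluate $\pi(x)$ on every class of order $p$. When the character table of $H$ together with its class fusion into $G$ is available, $|x^G\cap H|$ is read off as the sum of the sizes of the $H$-classes fusing to $x^G$, and $\pi(x)$ follows from the displayed formula; equivalently one uses a stored permutation character $1_H^G$ directly. Recording the classes with $\pi(x)=1$ yields the entries of Tables~\ref{tab:M}--\ref{tab:spor3}, and running this through both $G=\soc(G)$ and, where it exists, $G=\soc(G).2$ completes the almost simple analysis.

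The main obstacle is the handful of cases, concentrated in the largest groups (notably $\mathbb{M}$ and $B$), where either the character table of a maximal subgroup $H$ or its class fusion into $G$ is not recorded, so $\pi(x)$ cannot simply be looked up. For these I would argue directly: the Sylow condition forces $p$ to be large and the $p'$-part of $|H|$ to be small, which typically makes $|x^G\cap H|$ computable by hand from the structure of $\norm{G}{P}$ for a Sylow $p$-subgroup $P$, or else forces $\pi(x)>1$ through a lower bound on the number of conjugates of $H$ containing $x$ (recalling that, for self-normalising $H$, $\pi(x)$ counts precisely those conjugates). In any genuinely stubborn case one falls back on an explicit computation inside a permutation or matrix representation of $G$ to count the fixed cosets of $x$ directly.
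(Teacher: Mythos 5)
Your proposal is correct and follows essentially the same route as the paper: the paper's Lemma~\ref{lem: sporadic but not monster} and Lemma~\ref{lem: sporadic and monster} likewise reduce quasi-semiregularity of a prime-order element to the equivalent condition $|C_G(x)|=|C_H(x)|$ together with $x^G\cap H=x^H$ (i.e.\ $\pi(x)=1$), verify it via the GAP character table library and stored fusion maps, and dispose of the few subgroups with missing fusion data (notably $(2^2\times F_4(2)).2$ in $B$ and $2^{5+10+20}.(S_3\times L_5(2))$ and $59{:}29$ in the Monster) by exactly the kind of Sylow, centraliser-order and class-splitting arguments you outline. The only caveat is bibliographic: the complete list of maximal subgroups of the Monster comes from the recent work of Dietrich, Lee and Popiel rather than from the \textsc{Atlas}, whose list was long known to be incomplete.
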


From our classification, we observe that distinct conjugacy classes of {\qsr} subgroups of a  specified prime order are  quite special.

\begin{cor}
\label{cor: more than one class of qsrs}
    Suppose that $ G$ is a primitive group of degree $m$ with socle an alternating or sporadic group. If $G$ has at least two distinct conjugacy classes of {\qsr} subgroups of prime order $p$, then $G$, $p$ and $m$ are as in one of the lines of Table~\ref{tab: >1 qsr classes}.
\end{cor}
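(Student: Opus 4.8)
The plan is to derive Corollary~\ref{cor: more than one class of qsrs} directly from the classifications already established in Theorems~\ref{intro thm:alt or sym} and~\ref{thmintro: sporadic}, rather than proving anything new from scratch. The key observation is that a primitive group $G$ of degree $m$ with alternating or sporadic socle is, up to permutation isomorphism, given by a pair $(G,H)$ with $H$ a maximal subgroup not containing the socle, and $m = |G:H|$. Having two distinct conjugacy classes of {\qsr} subgroups of prime order $p$ means there are (at least) two $G$-classes of cyclic subgroups $\langle x\rangle$ of order $p$ each acting quasi-semiregularly on $[G:H]$. So the task is essentially bookkeeping: scan the classification data and pick out exactly those $(G,H,p)$ for which the quasi-semiregularity criterion is met by more than one class.

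First I would set up the translation between ``{\qsr} subgroup of prime order'' and ``element of prime order satisfying the criteria of the relevant theorem''. Since a {\qsr} subgroup of prime order $p$ is generated by a {\qsre} of order $p$, and conversely each such element generates a unique {\qsr} subgroup, the number of conjugacy classes of {\qsr} subgroups of order $p$ equals the number of $G$-classes of {\qsres} of order $p$ (one must be mildly careful because distinct conjugacy classes of elements may fuse into a single class of subgroups under the action of $N_G(\langle x\rangle)/\langle x\rangle$, but for fixed prime order this is a routine adjustment). With this dictionary in place, the alternating case reduces to inspecting Tables~\ref{tab:infinite families} and~\ref{tab:exceps for altn} together with the $n=6$ exceptions, asking for each admissible $H$ whether two non-conjugate classes of order-$p$ elements both satisfy the cycle-type condition $1^1 p^{(m-1)/p}$; the sporadic case reduces to inspecting the rows of Tables~\ref{tab:M}--\ref{tab:spor3} and counting, for each $H$, how many conjugacy classes of order-$p$ elements appear.

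The main content is then a finite verification. For the infinite families in Table~\ref{tab:infinite families} I expect at most one class of {\qsres} of a given prime order to arise generically, since for the natural actions (subset stabiliser, affine, projective line) the relevant elements of order $p$ typically form a single class in $\Alt(n)$ or $\Sym(n)$, so multiple classes should be confined to small or coincidental cases; these I would flag and record. For the exceptional alternating examples in Table~\ref{tab:exceps for altn} and the $n=6$ cases, and for all sporadic rows, the count of order-$p$ classes is read directly from the (finite) data, for instance using the ATLAS character-table information on conjugacy classes and their fixed-point counts on $[G:H]$. Assembling every triple $(G,p,m)$ for which the count exceeds one produces the list, which I would then record as Table~\ref{tab: >1 qsr classes}.

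I anticipate the main obstacle to be the careful element-versus-subgroup bookkeeping in the borderline cases, namely ensuring that two classes of {\qsres} genuinely yield two classes of {\qsr} \emph{subgroups} and are not identified by an outer automorphism or by fusion in $N_G(\langle x\rangle)$. This is particularly delicate for $\Alt(6)$ and its automorphic extensions, where the exceptional outer automorphisms permute conjugacy classes, and in the $6$-transitive-like sporadic examples $M_{11}, M_{12}, M_{23}, M_{24}$ appearing in Table~\ref{tab:exceps for altn}, where one must check whether several ATLAS classes of the same prime order act quasi-semiregularly and remain distinct under conjugacy. Once these fusion questions are settled, the corollary follows by direct enumeration.
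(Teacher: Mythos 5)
Your proposal is correct and follows essentially the same route as the paper: reduce to the classification data from Theorems~\ref{intro thm:alt or sym} and~\ref{thmintro: sporadic}, count conjugacy classes of {\qsres} of order $p$ case by case, and then adjust for the element-versus-subgroup fusion issue (which the paper handles via the observation that the claim is vacuous when $n<2p$ in the alternating case, via the unique cycle types in Propositions~\ref{prop:k-set stab} and~\ref{prop:k-part stab}, and via power maps for the sporadic cases $J_2$ and $He$). The subtlety you flag about distinct element classes generating conjugate subgroups is exactly the point the paper's proof must, and does, address.
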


    \begin{table}
        \centering
        \begin{tabular}{c|c|c|c}
            $G$ & $p$ & $m$ & Notes\\ \hline
             $\Alt(n) \leqslant G \leqslant \Sym(n)$ & $p$ & ${n!}/({(p!)^{n/p}(n/p)!})$ & $p\mid n$ and $n < p^2$ \\
             $HS$ & $5$ & $176$ & Two representations 
             \\
             $HS.2$ & $5$ & $22\, 176$ \\\
             $McL \leqslant  G \leqslant McL.2$ & $5$ & $299 \,376$ \\
             $J_4$ & $11$ & $271\, 649\, 045\, 348\, 352$ \\
        \end{tabular}
        \caption{Primitive groups with alternating or sporadic socles and at least two conjugacy classes of {\qsr} subgroups of order $p$.}
        \label{tab: >1 qsr classes}
    \end{table}

We would like to know whether this behaviour is indeed rare, and we pose:

\begin{problem}
    Classify the transitive permutation groups with at least two conjugacy classes of {\qsr} subgroups of   the same prime order.
\end{problem}

\medskip

\noindent {\bf Acknowledgements:} \quad We thank Gabriel Verret for useful preliminary discussions about the existence of \qsres.  We also thank Chris Parker for a helpful discussion on  strongly  $p$-embedded subgroups, {   and Gunter Malle for helpful discussions about subnormalisers.}  This research was supported by the Australian Research Council Discovery Project grant DP230101268.

\section{Preliminaries}

This section contains several group theoretic observations which are useful when studying groups containing {\qsres}.   

{  
\begin{lem}\label{lem:red to prim}
Let $G$ be a finite transitive permutation group on a set $\Omega$ and suppose that $x\in G$ is {\qsr}. If $\Sigma$ is a system of imprimitivity preserved by $G$, then the permutation $x^{\Sigma}$ induced on $\Sigma$ by $x$ is {\qsr}.
\end{lem}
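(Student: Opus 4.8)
The plan is to verify directly the two defining properties of {\qsr}ity for the induced permutation $x^\Sigma$: that it fixes a unique block of $\Sigma$ and acts semiregularly on the others. Write $m=\mathrm{ord}(x)$; since $x$ has a unique fixed point and we may assume $|\Omega|\geq 2$, we have $m\geq 2$. Let $\alpha$ be the fixed point of $x$ and let $B_0\in\Sigma$ be the block containing $\alpha$. Because $x\alpha=\alpha\in B_0\cap x B_0$ and distinct blocks are disjoint, $x$ fixes $B_0$ setwise, so $x^\Sigma$ fixes $B_0$. The substance of the argument is to show $B_0$ is the \emph{only} block fixed by $x^\Sigma$, and indeed by every nonidentity power of $x$.

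First I would record that every nonidentity $y\in\la x\ra$ is itself {\qsr} with the same fixed point: since $\la x\ra$ acts freely (semiregularly) on $\Omega\setminus\{\alpha\}$ and freeness passes to the subgroup $\la y\ra$, the element $y$ fixes $\alpha$ and no other point. The key step is then a congruence argument. Fix such a $y$, of order $k\geq 2$, and suppose $y$ fixes a block $B$ setwise, so that $\la y\ra$ permutes $B$. If $\alpha\notin B$ this restricted action is free, whence $k$ divides $|B|$; whereas for $B=B_0$ the point $\alpha$ is fixed and $B_0\setminus\{\alpha\}$ decomposes into free $\la y\ra$-orbits of size $k$, giving $|B_0|\equiv 1\pmod{k}$. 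As all blocks of $\Sigma$ share a common size $b$, a fixed block $B\neq B_0$ would force $b\equiv 0$ and $b\equiv 1\pmod{k}$ at once, which is impossible since $k\geq 2$. Hence $B_0$ is the unique block fixed setwise by $y$.

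Finally I would assemble these facts. Every nonidentity element of $\la x^\Sigma\ra$ is of the form $(x^j)^\Sigma$ with $x^j\neq 1$ (its image being nontrivial), and by the previous paragraph such $x^j$ fixes only $B_0$; therefore $(x^j)^\Sigma$ fixes $B_0$ and no other block. Thus $\la x^\Sigma\ra$ fixes $B_0$ and every nontrivial element of it fixes no block of $\Sigma\setminus\{B_0\}$, which is exactly the statement that $\la x^\Sigma\ra$ has the unique fixed block $B_0$ and acts semiregularly on the remaining blocks. Consequently $x^\Sigma$ is {\qsr}. I expect the only real obstacle to be the congruence argument of the middle paragraph—the observation that the block through $\alpha$ has size $\equiv 1\pmod{k}$ while any other fixed block has size divisible by $k$—together with applying it uniformly to all nonidentity powers of $x$, which is precisely what upgrades ``unique fixed block'' to ``semiregular on the complement''.
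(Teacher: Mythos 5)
Your proposal is correct and follows essentially the same route as the paper: both hinge on the observation that the block through $\alpha$ has size $\equiv 1\pmod{k}$ while a fixed block avoiding $\alpha$ would have size divisible by $k$, which is incompatible with all blocks having equal size. The paper packages the semiregularity step as a gcd argument on the cycle lengths of $x^\Sigma$ rather than quantifying over all nonidentity powers of $x$, but this is only a cosmetic difference.
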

\begin{proof}
     Let $\alpha$ be the unique point of $\Omega$ fixed by $x$. Then $x$ fixes the part $B\in \Sigma$ with $\alpha \in B$. Since $\alpha$ is the only point fixed by $x$, and $x$ acts semiregularly on the remaining points, $B$ is a union of orbits of $\langle x\rangle$. If $x$ has order $m$, then  $|B|\equiv 1 \pmod{m}$. If $B'\neq B$ is another part of $\Sigma$, then $|B|=|B'|$ so $B'$ cannot be a union of cycles of $x$, and thus $x$ does not fix~$B'$.   Moreover the cycle containing $B'$ of $x^\Sigma$, the permutation induced on $\Sigma$ by $x$,  has length $m$ since otherwise $(|B'|,m)>1$.  It follows that $x^\Sigma$ is a {\qsre} of $G^{\Sigma}$, the permutation group induced by $G$ on $\Sigma$. 
\end{proof}

\begin{rem}\label{rem: subs vs systs}
    Each system of imprimitivity $\Sigma$ preserved by $G$ corresponds to an overgroup of $G_\alpha$, namely the group $K=G_B$ where $B\in \Sigma$ is the block containing $\alpha$. Hence Lemma~\ref{lem:red to prim} can be interpreted as follows in terms of subgroups and coset actions: if $x\in G_\alpha$ and $x$ is {\qsr} on $\Omega=[G:G_\alpha]$, then for any subgroup $G_\alpha \leqslant K \leqslant G$, we have that $x$ is {\qsr} on $[G:K]$. 
\end{rem}
}

\begin{lem}\label{lem: qs and sylow} 
Suppose that $G$ is a finite transitive permutation group and a stabiliser $H=G_\alpha$ contains a {\qsr} {  $p$-element, for some prime $p$}. Then the following all hold: 
\begin{enumerate}[(1)]
\item $H$ contains a  Sylow $p$-subgroup of $G$;
\item  if $S$  is a Sylow $p$-subgroup of $G$ contained in $H$, then $N_G(S) \leqslant N_G(H)$;
\item if $N_G(H) =H$, then $N_G(S) = N_H(S)$.
\end{enumerate}
\end{lem}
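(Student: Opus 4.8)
The plan is to distill from the single hypothesis that $H=G_\alpha$ contains a {\qsr} $p$-element $x$ two facts, and then apply them to the three parts. Since $x\in G_\alpha$, its unique fixed point is $\alpha$. Working in the coset model $\Omega=[G:H]$ of right cosets under right multiplication (so $\alpha$ is the coset $H$), the cosets fixed by $x$ are exactly those $Hg$ with $gxg^{-1}\in H$; hence ``$x$ has unique fixed point $\alpha$'' says precisely that
\[
gxg^{-1}\in H \ \Longrightarrow\ g\in H.
\]
I will call this the \emph{engine}. The other fact is the elementary congruence that a $p$-group $P$ acting on a finite set $\Omega$ satisfies $|\mathrm{Fix}(P)|\equiv|\Omega|\pmod p$.

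For part (1), first I would pick a Sylow $p$-subgroup $S_0$ of $H$ containing $\langle x\rangle$. As $x\in S_0\leqslant H=G_\alpha$, every fixed point of $S_0$ is fixed by $x$ while $\alpha$ is fixed by all of $S_0$, so $S_0$ fixes exactly $\alpha$. The congruence then gives $|G:H|=|\Omega|\equiv|\mathrm{Fix}(S_0)|=1\pmod p$, whence $p\nmid|G:H|$ and $|S_0|=|H|_p=|G|_p$. Thus $S_0$ is a Sylow $p$-subgroup of $G$ lying in $H$, proving (1).

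For part (2), let $S$ be any Sylow $p$-subgroup of $G$ with $S\leqslant H$. By part (1), $S$ is also a Sylow $p$-subgroup of $H$, hence $H$-conjugate to $S_0$, say $S=h^{-1}S_0h$ with $h\in H$. Put $x'=h^{-1}xh\in S$; since $h\in H$ a direct computation shows the engine holds verbatim with $x$ replaced by $x'$. Now take $g\in N_G(S)$. Then $gx'g^{-1}\in gSg^{-1}=S\leqslant H$, so the engine forces $g\in H$. Therefore $N_G(S)\leqslant H\leqslant N_G(H)$, which is (2) --- indeed in the sharper form $N_G(S)\leqslant H$.

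Part (3) follows at once: under the extra hypothesis $N_G(H)=H$, part (2) gives $N_G(S)\leqslant N_G(H)=H$, so $N_G(S)=N_G(S)\cap H=N_H(S)$. I expect the only delicate steps to be bookkeeping rather than substantive: fixing one action convention so that the coset/fixed-point dictionary underlying the engine is unambiguous, and justifying via Sylow's theorem in $H$ that an arbitrary Sylow $p$-subgroup $S\leqslant H$ may be replaced by $S_0$ (equivalently, that a {\qsr} element can be taken inside $S$). The genuine content --- and the one point worth isolating --- is the engine itself: that having a \emph{unique} fixed point is equivalent to conjugates of $x$ lying in $H$ only when the conjugating element already lies in $H$.
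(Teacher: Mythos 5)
Your proof is correct and follows essentially the same route as the paper's: part (1) is the same mod-$p$ count (the paper reads $n-1\equiv 0 \pmod{p^i}$ off the cycle type of $x$, you apply the fixed-point congruence to a Sylow $p$-subgroup of $H$ containing $x$), and part (2) is the paper's argument made explicit --- the paper's ``without loss of generality $x\in S$'' is your conjugation by $h\in H$, and its ``since $x$ is quasi-semiregular, $H=H^g$'' is your engine. Your version in fact records the slightly sharper conclusion $N_G(S)\leqslant H$, which the paper's own argument also yields but does not state.
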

\begin{proof}
Let $n=|G:H|$. If $x\in H$ is a {\qsr} {  $p$-element}, then $x$ fixes a unique point and has orbits of a fixed length {  $p^i$, with $i\geqslant 1$, on the remaining points. Hence $n-1 = kp^i$ for some integer $k$. In particular, $n$} is coprime to $p$ and so $H$ contains a Sylow $p$-subgroup $S$ of $G$. Without loss of generality, we may assume that $x \in S$. In particular, for any $g\in N_G(S)$ we have $x \in S  = S^g \leqslant H^g$. Since $x$ is {\qsr}, $H=H^g$ and so $g\in N_G(H)$.
\end{proof}

\begin{lem}
\label{lem: x is qsres}
    Let $G$ be a finite group acting transitively
on a set $\Omega$ and let $\alpha\in \Omega $. For $g\in G$, let $\pi(g)$ denote the number of  points in $\Omega$ that are fixed by $g$. Then 
    $$\pi(g) = \frac{|G:G_\alpha |\cdot|g^G \cap G_\alpha|}{|g^G|}$$
    and if $g^G \cap G_\alpha$ is the disjoint union   $g_1^{G_\alpha} \dot\cup \cdots \dot\cup g_n^{G_\alpha}$, then
    \[\pi(g) = \sum_{i=1}^n |C_G(g) : C_{G_\alpha}(g_i)|= \sum_{i=1}^n |C_G(g_i ) : C_{G_\alpha}(g_i)|. \]
\end{lem}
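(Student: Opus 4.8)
The plan is to prove the first displayed formula by a double-counting argument relating the fixed points of $g$ to the conjugacy class $g^G$, and then to read off the two sums in the second formula by decomposing $g^G \cap G_\alpha$ into $G_\alpha$-classes. Throughout I would fix the convention that $G$ acts on the right, writing $\beta^g$, so that a typical point of $\Omega$ is $\alpha^k$ for some $k\in G$.

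First I would record the fixed-point criterion: since $G_{\alpha^k} = G_\alpha^{\,k}$, the element $g$ fixes $\alpha^k$ precisely when $g^{k^{-1}}\in G_\alpha$, a condition depending only on the point $\alpha^k$ and not on the particular $k$ representing it. As the stabiliser of any point has order $|G_\alpha|$, there are exactly $|G_\alpha|$ choices of $k$ with $\alpha^k=\beta$ for each fixed $\beta$. Counting the set $M=\{k\in G : g^{k^{-1}}\in G_\alpha\}$ by sorting the $k$ according to the point $\alpha^k$ they produce therefore gives $|M| = \pi(g)\cdot |G_\alpha|$.

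Next I would count $M$ a second way. The substitution $j=k^{-1}$ shows $|M| = |\{j\in G : g^{j}\in G_\alpha\}|$, and since the conjugation map $j\mapsto g^{j}$ has image $g^G$ with every fibre of size $|C_G(g)|$, this count equals $|C_G(g)|\cdot|g^G\cap G_\alpha|$. Equating the two expressions yields the intermediate identity $\pi(g)\,|G_\alpha| = |C_G(g)|\,|g^G\cap G_\alpha|$; dividing through and substituting $|C_G(g)| = |G|/|g^G|$ and $|G:G_\alpha| = |G|/|G_\alpha|$ produces the first displayed formula.

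For the second formula I would apply the orbit--stabiliser theorem to the $G_\alpha$-conjugation action to write $|g^G\cap G_\alpha| = \sum_{i=1}^n |g_i^{G_\alpha}| = \sum_{i=1}^n |G_\alpha : C_{G_\alpha}(g_i)|$, and feed this into the intermediate identity, whereupon the factors of $|G_\alpha|$ cancel and leave $\pi(g) = \sum_{i=1}^n |C_G(g):C_{G_\alpha}(g_i)|$. The final equality is then immediate: each $g_i$ lies in $g^G$, so $g_i$ is $G$-conjugate to $g$, giving $|C_G(g_i)| = |C_G(g)|$ and hence $|C_G(g):C_{G_\alpha}(g_i)| = |C_G(g_i):C_{G_\alpha}(g_i)|$ term by term. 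The argument is essentially routine; the only point needing care is the bookkeeping of the conjugation convention when translating ``$g$ fixes $\alpha^k$'' into the membership condition $g^{k^{-1}}\in G_\alpha$, which is why I would pin down the right-action convention before beginning the count.
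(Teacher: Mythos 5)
Your proof is correct, and the underlying count is the same as the paper's, just packaged differently. The paper obtains the key identity $\pi(g)\,|G_\alpha| = |C_G(g)|\cdot|g^G\cap G_\alpha|$ by quoting the induced-character formula for the permutation character (induced from the trivial character of $G_\alpha$, citing Isaacs) and then evaluating the resulting sum over a transversal of $C_G(g)$ in $G$; you instead prove exactly this identity from first principles by double-counting the set $\{k\in G : g^{k^{-1}}\in G_\alpha\}$, once by fibring over the points $\alpha^k$ of $\Omega$ (fibres of size $|G_\alpha|$, one per fixed point) and once by fibring the conjugation map $j\mapsto g^j$ (fibres of size $|C_G(g)|$). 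This buys you a self-contained argument with no appeal to character theory, at the cost of a little more bookkeeping with the action convention, which you handle correctly. You also write out the passage to the second displayed formula via $|g^G\cap G_\alpha|=\sum_i|G_\alpha:C_{G_\alpha}(g_i)|$ and the observation $|C_G(g_i)|=|C_G(g)|$ for $g_i\in g^G$ --- a step the paper leaves implicit. The one notational point worth flagging (it is already present in the statement of the lemma, not an error of yours) is that $|C_G(g):C_{G_\alpha}(g_i)|$ must be read as a ratio of orders rather than a subgroup index, since $C_{G_\alpha}(g_i)$ sits inside $C_G(g_i)$ rather than inside $C_G(g)$.
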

\begin{proof}
We include a proof for completeness.

Let $\chi$ be the permutation character of the $G$-action on the set of  cosets of $G_\alpha$. Then $\chi$ is  induced from the trivial character $\varphi$ of $G_\alpha$. Furthermore, for an element $g\in G$, $\chi(g)$ is the trace of the matrix of $g$, which is the number of fixed points of $g$. From the formula for induced characters \cite[Definition (5.1)]{Isaacs}, we have
\[
\chi(g) =\frac{1}{|G_\alpha|} \sum_{t\in G, tgt^{-1} \in G_\alpha} \varphi(tgt^{-1})
\]
Letting $t_1,\ldots,t_n$ be a transversal for $C_G(g)$ in $G$ we have $g^G = \{g^{t_1},\ldots,g^{t_n}\}$, and we may order the $t_i$ such that $g^{t_i} \in G_\alpha$ for $i=1,\ldots, k$, where $k=|g^G\cap G_\alpha|$. Then 
\[
\chi(g) =\frac{1}{|G_\alpha|} |C_G(g)| (\varphi(g^{t_1})+\cdots+\varphi(g^{t_k})) = \frac{|G : G_\alpha| \cdot |g^G \cap G_\alpha|}{|g^G|} 
\]
where we use the fact that $\varphi(g^{t_i})=1$ for $g^{t_i} \in G_\alpha$ since $\varphi$ is the trivial character of $G_\alpha$.
\end{proof}

\begin{cor}
\label{cor: qsr iff cent condition}
    Let $G$ be a  finite group acting transitively on a set $\Omega$, and let $g\in G_\alpha$ {  have prime order}, where $\alpha \in \Omega$. Then $g$ is {\qsr} if and only if $g^G \cap G_\alpha = g^{G_\alpha}$ and $C_G(g) = C_{G_\alpha}(g)$.
\end{cor}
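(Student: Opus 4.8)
The plan is to reduce the quasi-semiregularity of the prime-order element $g$ to the single numerical condition $\pi(g)=1$, and then read off the two stated conditions directly from the fixed-point formula of Lemma~\ref{lem: x is qsres}. First I would observe that, because $g$ has prime order $p$, every nonidentity power of $g$ generates $\langle g\rangle$ and hence fixes exactly the same points as $g$, while every cycle of $g$ on $\Omega$ has length $1$ or $p$. Thus $\langle g\rangle$ automatically acts semiregularly on the set of points it does not fix, and $g$ is quasi-semiregular if and only if it has a unique fixed point. Since $g\in G_\alpha$ fixes $\alpha$, we always have $\pi(g)\geqslant 1$, so $g$ is quasi-semiregular precisely when $\pi(g)=1$.

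Next I would invoke Lemma~\ref{lem: x is qsres}. Writing the decomposition $g^G\cap G_\alpha = g_1^{G_\alpha}\,\dot\cup\cdots\dot\cup\, g_n^{G_\alpha}$ into $G_\alpha$-classes, the lemma gives $\pi(g)=\sum_{i=1}^n |C_G(g_i):C_{G_\alpha}(g_i)|$. Each summand is at least $1$, since $C_{G_\alpha}(g_i)=C_G(g_i)\cap G_\alpha\leqslant C_G(g_i)$; moreover $n\geqslant 1$ because $g$ itself lies in $g^G\cap G_\alpha$. Hence $\pi(g)\geqslant n\geqslant 1$, and the equality $\pi(g)=1$ holds if and only if both $n=1$ and the unique index equals $1$.

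Finally I would translate these two equalities back into the statement. The condition $n=1$ says that $g^G\cap G_\alpha$ is a single $G_\alpha$-class; as this set contains $g$, it equals $g^{G_\alpha}$, and we may take $g_1=g$. The condition that the single index equals $1$ then reads $C_G(g)=C_{G_\alpha}(g)$. Conversely, the two displayed conditions force $n=1$ and make the unique summand equal to $1$, whence $\pi(g)=1$; combined with the first step this yields quasi-semiregularity, giving both directions. I expect the only point requiring genuine care to be the first step: one must justify that for an element of prime order semiregularity on the non-fixed points is automatic, so that quasi-semiregularity is truly equivalent to $\pi(g)=1$. (For elements of composite order this equivalence fails, which is precisely why the hypothesis that $g$ has prime order is invoked.) Everything after that is a direct reading of the non-negative sum furnished by Lemma~\ref{lem: x is qsres}.
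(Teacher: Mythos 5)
Your proof is correct and follows exactly the route the paper intends: the paper states this as an immediate corollary of Lemma~\ref{lem: x is qsres} without further argument, and your reduction to $\pi(g)=1$ via the primality of $|g|$, followed by reading off $n=1$ and the trivial centraliser index from the non-negative sum, is precisely that derivation. Your closing remark about why primality is needed matches the paper's own comment (the example $(1,2,3,4)(5,6)\in\Sym(7)$) following the corollary.
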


{  Note that the primality condition is necessary. For example, $g=(1,2,3,4)(5,6)\in \Sym(7)$ satisfies the conditions on $g$ in Corollary~\ref{cor: qsr iff cent condition} (with $\Omega$ a set of $7$ points), but $g$ is not {\qsr}.}

The following theorem is due to W.~A.~Manning \cite[Theorem XIV]{Manning} (see also \cite[Lemma 2.1]{CherylOnManning} and  \cite[Theorem]{AlperinOnManning}). Our version uses modern terminology,   see \cite[Lemma 2.23]{PraegerSchneider}  for the statement below and a detailed proof. 

\begin{thm}
    Let $G$ be a finite transitive permutation group on $\Omega$, let $H=G_\alpha$ and let $K\leqslant H$. If the set of $G$-conjugates of $K$ which are contained in $H$ form $t$-conjugacy classes $\mathcal C_1$, \ldots, $\mathcal C_t$ of $H$, then $K$ fixes $\sum_{i=1}^t\frac{|N_G(K_i)|}{|N_H(K_i)|}$ points of $\Omega$, where $K_i \in \mathcal C_i$ for $1\leqslant i \leqslant t$. In particular, if $t=1$, then $K$ fixes $|N_G(K):N_H(K)|$ points of $\Omega$.
\end{thm}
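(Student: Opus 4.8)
The plan is to prove Manning's theorem by reducing it to the point-counting formula of Lemma~\ref{lem: x is qsres}, but adapted from single elements to the subgroup $K$. The key observation is that the number of points of $\Omega$ fixed by $K$ equals the number of cosets $Hg$ (points of $[G:H]$) that are fixed setwise by $K$, i.e.\ cosets with $KHg = Hg$, equivalently $g^{-1}Kg \leqslant H$. So I would first translate ``$K$ fixes the point $Hg$'' into the condition $K^g \leqslant H$, and count these fixed points by organising the conjugates $K^g$ contained in $H$ into their $H$-conjugacy classes.

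The central computation is a double-coset / orbit-counting argument. Let $\mathrm{fix}(K)$ denote the set of points fixed by $K$. Consider the set $X = \{\, g \in G : K^g \leqslant H \,\}$. The group $H$ acts on $\mathrm{fix}(K)$, and by the orbit-stabiliser theorem the $G$-conjugates of $K$ lying in $H$ correspond, via $g \mapsto K^g$, to $H$-orbits on this set; the conjugates falling into a single $H$-class $\mathcal{C}_i$ (with representative $K_i$) contribute exactly the points fixed by $K$ that are ``of type $K_i$''. First I would fix a class $\mathcal{C}_i$ and count the number of points $Hg \in \Omega$ with $K^g$ being $H$-conjugate to $K_i$. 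Writing $K^g = K_i^h$ for some $h \in H$, one gets $K^{gh^{-1}} = K_i$, so $gh^{-1} \in N_G(K_i)\cdot(\text{something})$; carefully tracking the stabiliser of the point and the freedom in choosing $h$ yields that the number of such fixed points equals $|N_G(K_i) : N_H(K_i)|$. Summing over the $t$ classes gives the stated total $\sum_{i=1}^t |N_G(K_i)|/|N_H(K_i)|$.

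Concretely, I would argue as follows. The stabiliser in $G$ of the point $Hg$ is $H^g = g^{-1}Hg$, and $K$ fixes this point precisely when $K \leqslant H^g$, i.e.\ $K^{g^{-1}} \leqslant H$. Reindexing, the fixed points correspond bijectively to the cosets $Hg$ for which $K^g$ (after adjusting the side convention) lies in $H$. Partition these according to the $H$-conjugacy class of $K^g$ in $H$. For the class $\mathcal{C}_i$, the relevant $g$ form a union of cosets; the count of \emph{distinct} points $Hg$ they yield is the index of the stabiliser of $K_i$ inside $H$ relative to its stabiliser in $G$, which is precisely $|N_G(K_i):N_H(K_i)|$ once one verifies that $N_G(K_i)$ is exactly the set of elements conjugating $K_i$ to itself and that two elements give the same point iff they differ by left multiplication by $H$ and simultaneously fix $K_i$. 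The special case $t = 1$ is then immediate.

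The main obstacle I anticipate is bookkeeping the two group actions cleanly: the left action of $G$ on $[G:H]$ versus the conjugation action on subgroups, and ensuring the side conventions for $K^g$ versus $g^{-1}Kg$ stay consistent so that $N_G(K_i)$ and $N_H(K_i)$ appear with the correct orientation. A subtle point is that different $g$ in $X$ may give the same point $Hg$ yet non-equal (though $H$-conjugate) subgroups $K^g$; one must check that the map from points to $H$-classes is well defined and that the fibre over $\mathcal{C}_i$ has size exactly $|N_G(K_i):N_H(K_i)|$. This is a standard but delicate double-counting, and getting the indices to land on normalisers rather than centralisers (as in Lemma~\ref{lem: x is qsres}, where $K$ is cyclic and $N$ is replaced by $C$) is where care is needed; indeed for a general subgroup $K$ the relevant invariant is the normaliser, since conjugation by $N_G(K_i)$ fixes $K_i$ setwise rather than pointwise.
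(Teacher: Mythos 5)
The paper does not actually prove this statement: it is Manning's theorem, and the authors explicitly defer to \cite[Lemma 2.23]{PraegerSchneider} for ``the statement below and a detailed proof''. Your double-coset argument is the standard proof of this result and is essentially the one in that reference, so your approach is correct. Two small points. First, your opening translation ``$K$ fixes $Hg$ iff $KHg=Hg$'' is on the wrong side for the right-multiplication action on right cosets (that condition merely says $K\leqslant H$, which holds always); the correct condition is $HgK=Hg$, equivalently $gKg^{-1}\leqslant H$, which you do state correctly in your ``concretely'' paragraph via the stabiliser $H^g=g^{-1}Hg$, so this is only a notational slip. Second, the one computation you assert rather than carry out --- that the fixed points of type $\mathcal C_i$ number exactly $|N_G(K_i):N_H(K_i)|$ --- is precisely a double-coset size count: the set of $g\in G$ with $gKg^{-1}$ contained in $H$ and $H$-conjugate to $K_i$ is the double coset $Hg_iN_G(K)$, where $g_iKg_i^{-1}=K_i$, and
\[
|Hg_iN_G(K)| \;=\; \frac{|H|\cdot|N_G(K)|}{|H\cap g_iN_G(K)g_i^{-1}|} \;=\; \frac{|H|\cdot|N_G(K_i)|}{|N_H(K_i)|},
\]
so dividing by $|H|$ (to count cosets $Hg$ rather than elements $g$, which is legitimate because this set is a union of cosets $Hg$) gives $|N_G(K_i)|/|N_H(K_i)|$. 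Writing that out, and summing over $i$, completes the proof; the $t=1$ case is then immediate as you say.
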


{   This result yields a  criterion for \qsres\ of prime order, similar to that in Corollary~\ref{cor: qsr iff cent condition}, in terms of the subgroups they generate.}

\begin{cor}
\label{cor: normalisers equals implies qsr}
  Suppose $G$ is a finite transitive group with point-stabiliser $H$. Let $x\in H$ have prime order and let $K=\la x \ra$. 
Then $x$ is a {\qsre} if and only if 
\begin{enumerate}
    \item[(1)] $N_G(K) = N_H(K)$, and 
    \item[(2)] $K^G \cap H = K^H$. 
    \end{enumerate}
\end{cor}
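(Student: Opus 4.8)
The plan is to derive this directly from the theorem of Manning stated just above, once quasi-semiregularity of a prime-order element has been reduced to a fixed-point count. Write $p$ for the order of $x$, so that $K=\la x\ra$ has order $p$. First I would record that a point is fixed by $K$ precisely when it is fixed by $x$, and that on any point \emph{not} fixed by $K$ the $K$-orbit has size dividing $p$ and exceeding $1$, hence equal to $p$; thus $K$ automatically acts semiregularly on the points it moves. Since $x\in H=G_\alpha$ fixes $\alpha$, it follows that $x$ is {\qsr} if and only if $K$ fixes exactly one point of $\Omega$ (which must then be $\alpha$). This is the place where primality of $|x|$ is essential: it is exactly what makes the semiregularity part of the definition free, so that ``{\qsr}'' collapses to ``unique fixed point''.

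Next I would count $\mathrm{fix}(K)$ using Manning's theorem. Let $\mathcal C_1,\dots,\mathcal C_t$ be the $H$-conjugacy classes of those $G$-conjugates of $K$ contained in $H$, with representatives $K_i\in\mathcal C_i$. Because $K\leqslant H$, the subgroup $K$ is itself one such conjugate and so lies in one of these classes; after relabelling I may assume $\mathcal C_1$ is the class of $K$ and take $K_1=K$. Manning's theorem then gives that $K$ fixes exactly $\sum_{i=1}^t |N_G(K_i):N_H(K_i)|$ points. Since $N_H(K_i)=N_G(K_i)\cap H\leqslant N_G(K_i)$, each summand is a positive integer, so $\mathrm{fix}(K)\geqslant t\geqslant 1$.

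Finally I would read off the equivalence. The sum equals $1$ if and only if $t=1$ and the single index $|N_G(K_1):N_H(K_1)|$ equals $1$. With the representative chosen as $K_1=K$, the condition $t=1$ says that every $G$-conjugate of $K$ lying in $H$ is already $H$-conjugate to $K$, that is, $K^G\cap H=K^H$, which is (2); and $|N_G(K):N_H(K)|=1$ is precisely $N_G(K)=N_H(K)$, which is (1). Combined with the first paragraph, $x$ is {\qsr} if and only if both (1) and (2) hold, and this argument is symmetric, so it yields both implications at once.

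I do not expect a serious obstacle here, since Manning's theorem supplies the fixed-point formula outright. The only two points requiring care are the reduction in the first paragraph (where primality of $|x|$ is used to discharge the semiregularity condition) and the deliberate choice of $K$ itself as the distinguished representative $K_1$; the latter is what lets me phrase the ``each summand equals one'' conditions directly in terms of $K$ rather than in terms of an arbitrary conjugate, matching (1) and (2) exactly.
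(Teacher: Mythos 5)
Your proof is correct and follows exactly the route the paper intends: the corollary is stated as an immediate consequence of Manning's fixed-point formula, and your reduction of quasi-semiregularity for a prime-order element to ``$K$ fixes exactly one point,'' followed by reading off $t=1$ and $|N_G(K):N_H(K)|=1$ from the sum, is precisely that derivation. The two points you flag as needing care (primality discharging the semiregularity condition, and choosing $K$ itself as the representative $K_1$) are handled correctly.
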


\section{Examples from group theory}
\label{sec:egs-gp theory}

As mentioned above, {\qsres} turn up in several established families of permutation groups, and have connections with other notions in group theory. 

\subsection{Strongly $p$-embedded subgroups}

A subgroup $H$ of a finite group $G$ is \emph{strongly $p$-embedded} if $p \mid |H|$ and $p\nmid|H\cap H^g|$ for any $g\notin H$. 
Let $G$ act by right multiplication on the set $[G:H]$ of right cosets of $H$ and let $x\in H$ have order $p$. Then $x \notin H\cap H^g$ for any $g\notin H$, and so $x$ is {\qsr} in the action of $G$ on $[G:H]$. In particular, every element of $H$ of order $p$ is {\qsr}. For this reason our attention is drawn to finite groups $G$ with strongly $p$-embedded subgroups. If $G$  has a  cyclic Sylow $p$-subgroup $S$, then $N_G(S_0)$ is strongly $p$-embedded in $G$, where $S_0$ is the unique subgroup of $S$ (and therefore of $N_G(S_0)$) of order $p$. As stated in \cite[pg.~798]{ParkerStroth}, ``there is thus no prospect of listing all such groups'', yet much can be said when Sylow $p$-subgroups are non-cyclic. 
The following theorem is an amalgamation of \cite[Theorem 7.6.1]{GLS3} and \cite[Theorem 7.6.2]{GLS3} and relies on the Classification of the Finite Simple Groups. See also \cite[Proposition 2.5]{ParkerStroth}. For a finite group $G$, $F^*(G)$ denotes the \emph{generalised Fitting subgroup} (the subgroup generated by the Fitting subgroup and all quasisimple subnormal subgroups).

\begin{thm}
\label{thm: st p embedded}
    Suppose that $p$ is a prime, $G$ is a  group such that $K:=F^*(G)$ is simple. Let $P$ be a Sylow $p$-subgroup of $G$. Suppose that $P$ is not cyclic and that $H$   is  a strongly $p$-embedded subgroup of  $G$ with $P\leqslant H$.
    Then $K$ contains a strongly $p$-embedded subgroup $H_0$, $H_0$ is a maximal subgroup of $K$ and $p$, $K$ and $H_0$ are as in Table~\ref{tab:st p embedded}.
\end{thm}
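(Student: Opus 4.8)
The plan is to deduce the statement by combining the two cited classification results of \cite{GLS3}, rather than to reprove these deep CFSG-dependent theorems from scratch; the genuine work lies in passing from the almost simple group $G$ down to its socle $K=F^*(G)$ and in reconciling the hypotheses and conclusions of \cite[Theorem 7.6.1]{GLS3} and \cite[Theorem 7.6.2]{GLS3}. First I would fix the ambient structure. Since $F^*(G)=K$ is (non-abelian) simple, the standard fact $\cent{G}{K}=\zent{K}=1$ shows that the conjugation map $G\to\Aut(K)$ is injective, so $G$ is almost simple with $\soc(G)=K$. As $K\trianglelefteq G$, the intersection $P\cap K$ is a Sylow $p$-subgroup of $K$; and were $P\cap K=1$ then $P$ would centralise $K$, contradicting $\cent{G}{K}=1$. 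Hence $p\mid|K|$ and $P\cap K\neq1$.

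Next I would carry out the reduction to $K$ by setting $H_0:=H\cap K$ and verifying that $H_0$ is strongly $p$-embedded in $K$. Because $P\leqslant H$ and $K\trianglelefteq G$, we get $P\cap K\leqslant H_0$, so $p\mid|H_0|$. For any $g\in K\setminus H_0$ we have $g\notin H$ (otherwise $g\in H\cap K=H_0$), and the strong $p$-embeddedness of $H$ in $G$ gives $p\nmid|H\cap H^g|$; since $H_0\cap H_0^g=(H\cap H^g)\cap K\leqslant H\cap H^g$, it follows that $p\nmid|H_0\cap H_0^g|$. Thus $H_0$ is strongly $p$-embedded in $K$.

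With this reduction in place, I would invoke \cite[Theorem 7.6.1]{GLS3} to obtain the list of simple groups admitting a strongly $p$-embedded subgroup, and then use the hypothesis that $P$ is non-cyclic to discard the generic family coming from normalisers $\norm{G}{S_0}$ of subgroups of order $p$ in a cyclic Sylow (as discussed before the statement), restricting $(p,K)$ to the finitely many entries of Table~\ref{tab:st p embedded}. Finally \cite[Theorem 7.6.2]{GLS3} supplies the maximality of $H_0$ in $K$ and pins down $H_0$ itself, thereby filling in the remaining columns of the table; I would cross-check the resulting list against \cite[Proposition 2.5]{ParkerStroth}.

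The main obstacle I anticipate is the correct handling of the non-cyclicity hypothesis across the three layers $P$, $P\cap K$, and a Sylow $p$-subgroup of $K$: a priori $P$ could fail to be cyclic only because of an outer $p$-part (e.g.\ field automorphisms), while $P\cap K$ remains cyclic, in which case $K$ would have a strongly $p$-embedded subgroup but cyclic Sylow $p$-subgroups, placing it in the infinite family rather than the table. One must therefore check that the parameter actually governing the classification in \cite{GLS3}—the $p$-rank, or non-cyclicity of a Sylow $p$-subgroup of the relevant group—is the one that transfers correctly through the reduction, and that the exceptional almost simple configurations are exactly those recorded in Table~\ref{tab:st p embedded}. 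A secondary, purely bookkeeping difficulty is that the two source theorems are phrased under slightly different conventions, so some care is needed to confirm that their amalgamation is precisely the single statement asserted here.
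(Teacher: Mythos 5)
Your overall strategy --- treating the theorem as an amalgamation of \cite[Theorems 7.6.1 and 7.6.2]{GLS3} rather than something to be reproved --- is exactly what the paper does: the paper offers no proof beyond that citation, so the reduction you supply is additional content. Most of it is sound: $\cent{G}{K}=\zent{K}=1$ and hence $G$ almost simple, $P\cap K\in\syl{p}{K}$, and the verification that $H_0=H\cap K$ is strongly $p$-embedded in $K$ (granted $P\cap K\neq 1$) are all correct.

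However, one step is genuinely wrong: from $P\cap K=1$ you cannot conclude that $P$ centralises $K$. A $p$-subgroup meeting $K$ trivially merely embeds into $G/K\leqslant\Out(K)$ and may act on $K$ by outer automorphisms; for instance in $G=\PSL(2,32).5$ with $p=5$ we have $p\nmid|K|$, and the Sylow $5$-subgroup consists of field automorphisms which certainly do not centralise $K$. The conclusion you want, $p\mid|K|$, is still true under the non-cyclicity hypothesis, but it requires a different argument (e.g.\ that for a simple group $K$ and a prime $p\nmid|K|$ the Sylow $p$-subgroups of $\Out(K)$ are cyclic, since the diagonal part of $\Out(K)$ has order dividing $|K|$ and the field part is cyclic); and this is essentially the same difficulty as the ``main obstacle'' you identify at the end and leave unresolved. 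Both issues dissolve if you observe that the cited GLS3 theorems are themselves formulated for a group $G$ with $F^*(G)$ simple, with the $p$-rank/non-cyclicity hypothesis placed on $G$ --- exactly as in the statement being proved. The clean route is therefore to invoke \cite[Theorem 7.6.1]{GLS3} directly on $G$ and read off the subgroup $H_0\leqslant K$ and its maximality from \cite[Theorem 7.6.2]{GLS3}, rather than first descending to $K$ and then re-verifying the hypotheses there.
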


\begin{cor}
    Let $p$,   $K$ and $H_0$ be as in  Table~\ref{tab:st p embedded}. Then  $K$ acts primitively on the set of cosets of $H_0$ and every element of order $p$ in $H_0$ is {\qsr}.
\end{cor}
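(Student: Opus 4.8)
The plan is to deduce both claims directly from the definition of a strongly $p$-embedded subgroup together with Theorem~\ref{thm: st p embedded}. The corollary asserts two things: that $K$ acts primitively on $[K:H_0]$, and that every element of order $p$ in $H_0$ is quasi-semiregular in this action.

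For the primitivity claim, the key input is that Theorem~\ref{thm: st p embedded} guarantees $H_0$ is a \emph{maximal} subgroup of $K$. Since $K$ is simple, it acts faithfully on the cosets of any proper subgroup (the kernel would be a proper normal subgroup contained in $H_0$, hence trivial), and the coset action of a group on a maximal subgroup is primitive. So primitivity is immediate once one observes that $H_0$ being maximal and $K$ simple forces the action to be faithful and primitive. I would state this in one or two sentences.

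For the quasi-semiregularity claim, I would argue exactly as in the running text of Section~3.1 preceding Theorem~\ref{thm: st p embedded}, but now applied to the pair $(K, H_0)$. The point is that $H_0$ is strongly $p$-embedded in $K$: this is precisely part of the conclusion of Theorem~\ref{thm: st p embedded}. Let $x \in H_0$ have order $p$ and consider the action of $K$ on $[K:H_0]$. The coset $H_0$ is the fixed point of $x$. For any $g \in K \setminus H_0$, the strong $p$-embedding condition gives $p \nmid |H_0 \cap H_0^g|$, so the $p$-element $x$ cannot lie in $H_0^g$; equivalently $x$ fixes no coset $H_0 g$ other than $H_0$ itself. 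Thus $x$ has a unique fixed point. It remains to check that $x$ acts semiregularly on the remaining cosets, i.e.\ that no nontrivial power of $x$ fixes an additional point. But every nontrivial power $x^j$ (for $1 \le j \le p-1$) again has order $p$, so the same argument shows $x^j$ fixes only the coset $H_0$; hence the only element of $\langle x \rangle$ fixing a second point is the identity, and $x$ is quasi-semiregular. This is the main content, and it is short.

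The only point requiring a little care is verifying that the strong $p$-embedding really transfers to the pair $(K,H_0)$ as needed, rather than to the ambient group $G$ of Theorem~\ref{thm: st p embedded}. Since the theorem's conclusion explicitly states that $H_0$ is strongly $p$-embedded \emph{in} $K$, this is given to us directly and is not an obstacle. I would therefore expect the proof to be essentially a two-paragraph argument: one sentence establishing primitivity from maximality and simplicity, and a short paragraph reproducing the strongly-$p$-embedded-implies-quasi-semiregular reasoning for $(K,H_0)$. There is no substantial obstacle; the work lies in having set up Theorem~\ref{thm: st p embedded} so that its conclusions supply exactly these two hypotheses.
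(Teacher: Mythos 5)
Your proposal is correct and follows essentially the same route as the paper: the paper derives primitivity from the maximality of $H_0$ in the simple group $K$ (supplied by Theorem~\ref{thm: st p embedded}) and obtains quasi-semiregularity from the observation, made explicitly in the text of Section~\ref{sec:egs-gp theory} just before that theorem, that an element of order $p$ in a strongly $p$-embedded subgroup lies in no conjugate $H_0^g$ with $g\notin H_0$ and hence fixes only the coset $H_0$. Your extra remark that the same argument applies to every nontrivial power of $x$ (each again of order $p$) is a correct and worthwhile completion of the semiregularity check that the paper leaves implicit.
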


We remark that the only  examples in Table~\ref{tab:st p embedded} with socle an alternating group appear in case (b), and these examples are identified in  Theorem~\ref{intro thm:alt or sym}(1), occurring  in row 2 of Table~\ref{tab:infinite families}. Comparing Tables~\ref{tab:infinite families} and~\ref{tab:st p embedded}, we observe that examples arising  from strongly $p$-embedded subgroups appear to constitute a small minority of the primitive groups containing {\qsres}. In all other cases appearing in Table~\ref{tab:st p embedded}  the socle is a group of Lie type or a sporadic simple group.  The examples of  sporadic groups are visible in Tables~\ref{tab:spor1}--\ref{tab:spor3} in Section~\ref{sec:tables}.  For example, considering $G=J_4$, we see that $G$ has a strongly $11$-embedded subgroup, leading to {\qsres} in its action on the set of cosets of $H=11^{1+2}_+:(5 \times 2S_4)$. On the other hand, Table~\ref{tab:spor1} shows that $G$ also has {\qsres} of order $5$ and $7$ when $H=2^{3+12}.(S_5 \times L_3(2))$ and {\qsres} of orders $31$ and $23$ when $H= L_2(32).5$ and $H=L_2(23).2$, respectively. Thus our results give examples where point-stabilisers are neither normalisers of cyclic Sylow subgroups nor  strongly $p$-embedded subgroups.

\begin{table}
    \centering
    \begin{tabular}{c |c|c|c | c}
 Case       & $p$ & $K$                 & $H_0$  & Notes\\ \hline
      (a)   & $p$ & $\PSL(2,p^{a+1})$, $\PSU(3,p^a)$   & $N_K(Q)$ & $a\geqslant 1$ \\
      & $2$ &   ${}^2B_2(p^{2a+1})$ & $N_K(Q)$ & $a\geqslant 1$ \\
      & $3$ & $\PSL(2,8)$  & $N_K(Q)$ & $G= \PSL(2,8).3$ \\ 
      & $3$ & ${}^2G_2(p^{2a+1})$  & $N_K(Q)$ &$a\geqslant 1$ \\ 
      & $5$ & ${}^2B_2(32)$ & $N_K(Q)$ & $G={}^2B_2(32).5$ \\  
      (b)   & $>3$ & $\Alt(2p)$ & $(\Sym(p)\wr C_2)\cap K$  & $|G/K|\leqslant 2$ \\
      (c)   & $3$ &  $\PSL(3,4)$ & $N_K(Q) \cong \PSU(3,2)$ & $G/K$ is a $2$-group\\
       (d)  &  $3$ & $M_{11}$  & $N_K(Q) = QS$,\quad $|S|=16$ & $G=K$ \\
         (e)& $5$ & ${}^2F_4(2)'$  & $N_K(Q) = QS$,\quad  $S= \mathbb Z_4\circ \SL(2,3)$ & $|G/K| \leqslant 2$\\
       (f)  & $5$ & $McL$ & $N_K(Q)=QS$,\quad $S=\mathbb Z_3 .\mathbb Z_8$ & $|G/K| \leqslant 2$\\
       (g)  & $5$ &  $Fi_{22}$ & $\Aut(D_4(2))$ & $|G/K| \leqslant 2$\\
      (h)   & $11$ & $J_4$ & $N_K(Q) = QS$,\quad $S = \mathbb Z_5 \times \GL(2,3)$ & $G = K$ \\
    \end{tabular}
    \caption{Examples from strongly $p$-embedded subgroups, where $Q$ is a Sylow $p$-subgroup of $K$.}
    \label{tab:st p embedded}
\end{table}

{  
\subsection{Subnormalisers}

Examples of {\qsres} also arise from finite groups for which the subnormaliser of an element of prime order is a proper subgroup: the \emph{subnormaliser} of an element $x\in G$ is defined to be 
$$
\mathrm{Sub}_G(x):=\langle g\in G \mid \langle x \rangle\text{ is subnormal in }\langle x,g\rangle \rangle.
$$
The following connection between {\qsres} and subnormalisers was suggested to us by Gunter Malle.

\begin{thm}\label{l:subn}
Let $G$ be a transitive permutation group on a set $\Omega$ and let $x\in G_\alpha$ have prime order, where $\alpha \in \Omega$. Then $x$ is {\qsr} on $\Omega$ if and only if $\mathrm{Sub}_G(x) \leqslant G_\alpha$.
\end{thm}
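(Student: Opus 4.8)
\emph{Setup and an elementary observation.} The plan is to reduce {\qsr}-ness to the single requirement that $x$ has a \emph{unique} fixed point on $\Omega=[G:H]$ (these are equivalent here because $x$ has prime order $p$, so every non-trivial $\langle x\rangle$-orbit already has length $p$), and then to control the fixed points of $x$ via two facts about the set $\mathcal S=\{g\in G:\langle x\rangle\text{ is subnormal in }\langle x,g\rangle\}$, which generates $\mathrm{Sub}_G(x)$. The starting point is the elementary equivalence $g\in\mathcal S\iff x\in\op{p}{\langle x,g\rangle}$ (a $p$-subgroup is subnormal precisely when it lies in $\op{p}{\phantom{x}}$). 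From this I would extract the key elementary fact: \emph{if $R$ is a $p$-subgroup with $x\in R$, then $N_G(R)\subseteq\mathcal S$}; indeed for $n\in N_G(R)$ the group $R\cap\langle x,n\rangle$ is a normal $p$-subgroup of $\langle x,n\rangle$ containing $x$, so $x\in\op{p}{\langle x,n\rangle}$. In particular every $Q\in\syl{p}{G}$ with $x\in Q$ satisfies $Q\subseteq\mathcal S$, whence $Q\leqslant\mathrm{Sub}_G(x)$.

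\emph{Forward implication.} Suppose $x$ is {\qsr}, with unique fixed point $\alpha$; then $|\Omega|\equiv1\pmod p$, so $p\nmid|\Omega|$. Let $g\in\mathcal S$ and set $P=\op{p}{\langle x,g\rangle}$, a $p$-group containing $x$ and normalised by $g$. As $p\nmid|\Omega|$, the $p$-group $P$ fixes at least one point; but $x\in P$ forces $\mathrm{Fix}(P)\subseteq\mathrm{Fix}(x)=\{\alpha\}$, so $\mathrm{Fix}(P)=\{\alpha\}$ and $P\leqslant G_\alpha=H$. Since $g$ normalises $P$, it permutes $\mathrm{Fix}(P)=\{\alpha\}$ and hence fixes $\alpha$, so $g\in H$. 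Thus $\mathcal S\subseteq H$ and therefore $\mathrm{Sub}_G(x)=\langle\mathcal S\rangle\leqslant H$.

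\emph{The fusion-control lemma (the main obstacle).} The heart of the converse is the statement: \emph{if $x\in Q\in\syl{p}{G}$ and $x^b\in Q$ for some $b\in G$, then $b\in\mathrm{Sub}_G(x)$.} I would prove this from Alperin's fusion theorem, which gives a factorisation $b=g_1\cdots g_n c$ with $c\in N_G(Q)$ and each $g_i\in N_G(D_i)$ for a $p$-subgroup $D_i\leqslant Q$, arranged so that the successive conjugates $x_0=x$, $x_i=x^{g_1\cdots g_i}$ satisfy $x_{i-1}\in D_i$ and $x_n\in Q$. By the elementary fact above, $g_i\in N_G(D_i)\subseteq\mathrm{Sub}_G(x_{i-1})$; and since the subnormaliser is conjugation-equivariant, $\mathrm{Sub}_G(x_i)=\mathrm{Sub}_G(x_{i-1})^{g_i}=\mathrm{Sub}_G(x_{i-1})$ because $g_i$ lies in $\mathrm{Sub}_G(x_{i-1})$. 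Inductively each $\mathrm{Sub}_G(x_i)=\mathrm{Sub}_G(x)$, so every $g_i\in\mathrm{Sub}_G(x)$ and likewise $c\in N_G(Q)\subseteq\mathrm{Sub}_G(x_n)=\mathrm{Sub}_G(x)$, giving $b\in\mathrm{Sub}_G(x)$. I expect this to be the delicate point: it is exactly where local (Sylow) data must be promoted to global conjugacy of $\langle x\rangle$, and it genuinely fails for smaller overgroups such as $Q$ itself (e.g.\ $\langle(12)(34)\rangle\leqslant V_4\leqslant A_4$ inside $A_5$, where the three cyclic subgroups of $V_4$ fuse only in $A_4=\mathrm{Sub}_{A_5}((12)(34))$).

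\emph{Converse implication.} Assume $\mathrm{Sub}_G(x)\leqslant H$. Choosing $Q\in\syl{p}{G}$ with $x\in Q$ gives $Q\leqslant\mathrm{Sub}_G(x)\leqslant H$, so $p\nmid|\Omega|$ and $x$ fixes $\alpha=H$. If $Hc$ is any fixed point of $x$ then $x^{c^{-1}}\in H$; placing $x^{c^{-1}}$ in a Sylow $p$-subgroup of $H$ (which is Sylow in $G$) and conjugating it onto $Q$ by some $h\in H$ via Sylow's theorem in $H$, I obtain $x^{c^{-1}h^{-1}}\in Q$. The fusion-control lemma then gives $c^{-1}h^{-1}\in\mathrm{Sub}_G(x)\leqslant H$, so $c\in H$ and $Hc=H$. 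Hence $\alpha$ is the unique fixed point, and as $x$ has prime order it is {\qsr}. (Equivalently, once the lemma is available one verifies conditions (1)--(2) of Corollary~\ref{cor: normalisers equals implies qsr} directly: (1) holds since $N_G(\langle x\rangle)\leqslant\mathrm{Sub}_G(x)\leqslant H$, and (2) is precisely the fusion statement.)
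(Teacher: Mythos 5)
Your proof is correct, but it takes a genuinely different and more self-contained route than the paper. The paper's forward direction invokes its Lemma~\ref{lem: qs and sylow} and Corollary~\ref{cor: qsr iff cent condition} to show that $G_\alpha$ contains a Sylow $p$-subgroup and $C_G(x)$ and controls fusion of $x$, and then cites Malle's result \cite[Corollary 2.10]{Malle} that $\mathrm{Sub}_G(x)$ is the \emph{smallest} subgroup with these properties; your forward direction instead is a short direct fixed-point argument (the normal $p$-subgroup $\mathrm{O}_p(\langle x,g\rangle)$ containing $x$ must fix a point since $p\nmid|\Omega|$, hence fixes exactly $\alpha$, hence its normaliser $g$ fixes $\alpha$), which avoids Malle entirely and is arguably cleaner. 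For the converse, the paper again cites \cite[Corollary 2.10]{Malle} to see that $\mathrm{Sub}_G(x)$ itself contains $C_G(x)$ and controls fusion of $x$, applies Corollary~\ref{cor: qsr iff cent condition} to the action on $[G:\mathrm{Sub}_G(x)]$, and then pushes the conclusion up to $[G:G_\alpha]$ via Lemma~\ref{lem:red to prim}; you instead reprove the needed fusion-control statement from scratch via Alperin's fusion theorem (your ``fusion-control lemma'' is in substance the relevant half of Malle's Corollary 2.10) and then count fixed cosets directly. The trade-off is clear: the paper's proof is shorter by outsourcing the local-to-global fusion step to \cite{Malle}, while yours is self-contained modulo Alperin's theorem and makes visible exactly where the subnormaliser enters (every normaliser of a $p$-subgroup containing a conjugate of $x$ lies in it). All the individual steps you use check out: the characterisation of subnormal $p$-subgroups via $\mathrm{O}_p$, the conjugation-equivariance $\mathrm{Sub}_G(x^g)=\mathrm{Sub}_G(x)^g$, and the reduction of quasi-semiregularity to uniqueness of the fixed point for elements of prime order.
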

\begin{proof}
    Suppose that $x\in G $ is of prime order and is {\qsr} with unique fixed point $\alpha$. Then by Lemma~\ref{lem: qs and sylow}, $G_\alpha$ contains a Sylow $p$-subgroup of $G$, and by Corollary~\ref{cor: qsr iff cent condition}, $G_\alpha$ contains $C_G(x)$ and $x^G \cap G_\alpha = x^{G_\alpha}$. By \cite[Corollary 2.10]{Malle}, $\Sub_G(x)$ is the smallest subgroup of $G$ with these properties (by inclusion), and hence $\Sub_G(x) \leqslant G_\alpha$. 

Now assume that $\Sub_G(x) \leqslant G_\alpha$. Since $G_\alpha$ is a core-free subgroup of $G$, so is $\Sub_G(x)$. In particular, $G$ acts faithfully on $[G:\Sub_G(x)]$. By \cite[Corollary 2.10]{Malle} we have $C_G(x) \leqslant \Sub_G(x)$ and  $x^G \cap \Sub_G(x) = x^{\Sub_G(x)}$. Hence Corollary~\ref{cor: qsr iff cent condition} shows that $x$ is {\qsr} in the action of $G$ on $[G:\Sub_G(x)]$. Applying Lemma~\ref{lem:red to prim} with $\Sigma$ corresponding to the inclusion $\Sub_G(x) \leqslant G_\alpha$ (see Remark~\ref{rem: subs vs systs}), we see that $x$ is {\qsr} in the action of $G$ on $[G:G_\alpha]=\Omega$.
\end{proof}

We observe that $x \in G$ may be {\qsr} in several actions of $G$. For example, let $G=M_{12}$ and let $H=M_{11}$ and $K=\PSL(2,11)$. Let $x\in G$ have order $11$, so $\Sub_G(x)\cong 11:5$ is contained in (conjugates of) $H$ and $K$. Thus $x$ is {\qsr} on both $[G:H]$ and $[G:K]$ (see also Table~\ref{tab:spor2}). This situation is summarised in the following.

\begin{cor}\label{c:subn}
    Suppose that $G$ is a finite group, $x\in G$ has prime order and $\Sub_G(x)$ is a proper core-free subgroup of $G$. Then $x$ is {\qsr} in the action of $G$ on $[G:H]$ for any proper subgroup $H$ of $G$ with $\Sub_G(x) \leqslant H $.
\end{cor}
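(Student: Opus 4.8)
The plan is to reduce everything to the faithful coset action on $[G:\Sub_G(x)]$ and then push quasi-semiregularity up to $[G:H]$ using the block-reduction of Lemma~\ref{lem:red to prim}. First I would record the one elementary fact about subnormalisers that guarantees a fixed point exists: since $\la x\ra$ is (trivially) subnormal in $\la x,x\ra=\la x\ra$, the generator $x$ itself lies in $\Sub_G(x)$, so $x\in\Sub_G(x)\leqslant H$. Thus $x$ stabilises the trivial coset in the action of $G$ on $[G:H]$ and has at least one fixed point; the content of the statement is that this fixed point is unique and that $x$ acts semiregularly on the remaining cosets.

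Next I would establish that $x$ is \qsr\ on $[G:\Sub_G(x)]$. Because $\Sub_G(x)$ is proper and core-free by hypothesis, the coset action of $G$ on $\Omega_0:=[G:\Sub_G(x)]$ is transitive and faithful, with point stabiliser $\Sub_G(x)$ containing $x$. Applying Theorem~\ref{l:subn} to this action with $G_\alpha=\Sub_G(x)$, the defining condition $\Sub_G(x)\leqslant G_\alpha$ holds with equality, and hence $x$ is \qsr\ on $\Omega_0$. (Equivalently, this is precisely the backward direction inside the proof of Theorem~\ref{l:subn}: the centraliser and fusion conditions $C_G(x)=C_{\Sub_G(x)}(x)$ and $x^G\cap\Sub_G(x)=x^{\Sub_G(x)}$ supplied by \cite[Corollary 2.10]{Malle} feed directly into Corollary~\ref{cor: qsr iff cent condition}.)

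Finally I would transfer this conclusion to $[G:H]$. Since $\Sub_G(x)\leqslant H\leqslant G$ with $H$ proper, the inclusion corresponds, via Remark~\ref{rem: subs vs systs}, to a system of imprimitivity $\Sigma$ for the action of $G$ on $\Omega_0$, where $\Sigma$ is isomorphic as a $G$-set to $[G:H]$. By Lemma~\ref{lem:red to prim} the induced permutation $x^\Sigma$ is \qsr, which is exactly the assertion that $x$ is \qsr\ in the action of $G$ on $[G:H]$. One may equally quote Remark~\ref{rem: subs vs systs} directly, taking the stabiliser there to be $\Sub_G(x)$.

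The only genuine subtlety, and the step I would be most careful about, is that $H$ is \emph{not} assumed core-free, so the action of $G$ on $[G:H]$ need not be faithful and one cannot simply invoke Theorem~\ref{l:subn} with $G_\alpha=H$. Routing the argument through the faithful action on $[G:\Sub_G(x)]$ and then applying the purely combinatorial block-reduction of Lemma~\ref{lem:red to prim}, which requires only transitivity and not faithfulness, circumvents this cleanly. As a consistency check I would verify the worked example $G=M_{12}$ with $x$ of order $11$, where $H\in\{M_{11},\PSL(2,11)\}$ are both overgroups of $\Sub_G(x)\cong 11{:}5$, against the conclusion.
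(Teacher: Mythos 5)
Your proposal is correct and follows essentially the same route as the paper: establish that $x$ is {\qsr} on the faithful action $[G:\Sub_G(x)]$ via Theorem~\ref{l:subn}, then transfer to $[G:H]$ using Lemma~\ref{lem:red to prim} (equivalently Remark~\ref{rem: subs vs systs}). The extra observations you include --- that $x\in\Sub_G(x)$ and that $H$ need not be core-free, which is why one routes through the faithful action --- are sound and only make explicit what the paper leaves implicit.
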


\begin{proof}
    Let $G$, $x$ and $\Sub_G(x)$ be as in the statement and set $\Omega=[G:\Sub_G(x)]$. Then $G$ acts faithfully on $\Omega$ and by Theorem~\ref{l:subn}, $x$ is {\qsr}. If $H$ is a proper subgroup of $G$ with $\Sub_G(x) \leqslant H$, then with $\Sigma=[G:H]$, Lemma~\ref{lem:red to prim} shows that $x^{\Sigma}$ is {\qsr}.
\end{proof}

Malle's motivation for studying the subnormaliser relates to  a new local-global conjecture of Moret\'{o} and Rizo on values of  irreducible characters of finite groups \cite{MoretoRizo} (see also \cite{MoretoNavarroRizo}). Investigating the validity of this conjecture, Malle showed that the subnormaliser is proper for a particular kind of $p$-element:  a $p$-element of a finite group $G$ is \emph{picky} if it lies in a unique Sylow $p$-subgroup of $G$. For example, if $G$ has cyclic Sylow $p$-subgroups, then any generator of a Sylow $p$-subgroup is picky. Malle  \cite[Corollary 2.7]{Malle} proved that $x$ is picky if and only if $\mathrm{Sub}_G(x)=N_G(P)$ where $P$ is a Sylow $p$-subgroup of $G$ containing $x$. Thus if $x$ of order $p$ is picky and $N_G(P)$ is core-free in $G$ where $P$ is the unique Sylow $p$-subgroup of $G$ containing $x$, 
then Corollary~\ref{c:subn} shows that $x$ is {\qsr} in the action of $G$ on $[G:H]$ for any overgroup $H <G$ of $N_G(P)$.
}

These observations in the case of picky elements yield several explicit families of groups with \qsres\ in \cite[Section 3]{Malle} and \cite[Section 5]{Malle}. One large class of examples are  `regular unipotent' elements (of prime order) in finite groups of Lie type which are {\qsr} when the stabiliser $H$ is a parabolic subgroup. We note that a group contains a picky $p$-element if and only if it does not contain a redundant Sylow $p$-subgroup, as in \cite{MarotiMartinezMoreto}. From our investigations we see that the picky elements {  of prime order} form a proper subclass of {\qsres} of prime order. Indeed, an element such as $(1,2)(3,4)(5,6)\in\Sym(7)$ is {\qsr} in the usual action on $7$ points, but lies in at least three Sylow $2$-subgroups, and hence is not picky. {  Considering arbitrary $p$-elements, the situation is not so clean: an element such as $x:=(1,2,3,4)(5,6)$ is picky in $G:=\Alt(7)$, but $x$ is not {\qsr} in the action on $[G:\mathrm{Sub}_G(x)]$, where $ \mathrm{Sub}_G(x)\cong D_8$. This example also shows that the condition that $|x|$ is prime cannot be dropped from the assumptions of Theorem~\ref{l:subn}.}

\section{Affine groups}
\label{sec: affine groups}

\subsection{$2$-transitive affine groups}
The $2$-transitive affine groups were classified by Huppert \cite{Huppert} in the soluble case, and by Hering \cite{Hering} in the insoluble case.  There are four infinite classes, and several families of exceptions that occur for certain degrees less than $4096$.  For a complete list, we refer to \cite[Appendix 1]{Liebeck1984} and \cite[\S7.3]{Cameron}.

\begin{thm}
\label{thm: 2-trans affine thm}
Suppose that $G$ is a $2$-transitive  affine group of degree at least $3$. Then $G$ has a {\qsre}.
\end{thm}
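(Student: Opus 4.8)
The plan is to realise a \qsre\ inside the point stabiliser. Identify $\Omega$ with $V=\mathbb{F}_p^d$, so that $G=V\rtimes G_0$ has degree $p^d\geq 3$ with $G_0\leqslant\GL(d,p)$, and recall that $G$ is $2$-transitive if and only if $G_0$ is transitive on $V\setminus\{0\}$. The key reduction is the following: if $g\in G_0$ has prime order $r$ and no nonzero fixed vector (equivalently, $1$ is not an eigenvalue of $g$ over $\overline{\mathbb{F}}_p$), then $g$ fixes only the origin, and since $g$ has prime order every orbit of $\langle g\rangle$ on $V\setminus\{0\}$ has length $r$; hence $g$ is a \qsre\ of $G$. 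It therefore suffices to produce such an element, and an element conjugate to a nonidentity scalar is one obvious candidate, as noted in the introduction.

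The main case requires no classification at all. By orbit--stabiliser, transitivity of $G_0$ on $V\setminus\{0\}$ gives $(p^d-1)\mid |G_0|$. For all pairs $(p,d)$ with $d\geq 2$ outside a short list, Zsygmondy's theorem supplies a primitive prime divisor $r$ of $p^d-1$, that is, a prime dividing $p^d-1$ but no $p^k-1$ with $k<d$. Then $r\mid |G_0|$, so by Cauchy's theorem $G_0$ contains an element $g$ of order $r$. As $r$ is coprime to $p$, the element $g$ is semisimple and its eigenvalues are $r$-th roots of unity; because the multiplicative order of $p$ modulo $r$ is exactly $d$, each nontrivial such root has degree $d$ over $\mathbb{F}_p$. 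Since $g\neq I$ it has a nontrivial eigenvalue, whose Galois orbit already has size $d$ and so accounts for the whole $d$-dimensional space. Thus $1$ is not an eigenvalue of $g$, and $g$ is the required fixed-point-free element.

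It remains to treat the degenerate case $d=1$ and the finitely many exceptions to Zsygmondy. When $d=1$ we have $G_0=\GL(1,p)=\mathbb{F}_p^\times$ acting by scalars, and any nonidentity element is \qsr. The Zsygmondy exceptions are $p^d=2^6$, and $d=2$ with $p+1$ a power of $2$ (so $p$ is a Mersenne prime, forcing $p$ odd). In dimension $2$ I would show directly that $-I\in G_0$: running through the transitive linear groups of degree $p^2$ via Huppert--Hering \cite{Huppert,Hering,Liebeck1984,Cameron}, each contains $\SL(2,p)$ or a sporadic group $2\cdot\Alt(4)$, $2\cdot\Sym(4)$, $2\cdot\Alt(5)$ (all with central involution $-I$), or lies in $\Gamma\mathrm{L}(1,p^2)$, where a short count shows $G_0\cap\mathbb{F}_{p^2}^\times$ has even order and hence contains the scalar $-1=-I$. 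As $p$ is odd, $-I$ is a fixed-point-free involution, so \qsr. For $p^d=2^6$ one inspects the finitely many transitive subgroups of $\GL(6,2)$: groups containing $\SL(2,8)$ or $\SL(3,4)$ carry a primitive-prime-divisor element (of order $3$, resp.\ $7$) relative to the subfields $\mathbb{F}_8$, $\mathbb{F}_4$, acting irreducibly and hence fixed-point-freely; the (semi)linear groups and $\GL(6,2)=\SL(6,2)$ contain a nontrivial Singer element; and for $G_0\geqslant\Sp(6,2)$ or $G_0\geqslant G_2(2)'$ one checks directly from the element-order data that a fixed-point-free element (of order $9$, resp.\ $7$) exists.

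The main obstacle is precisely this exceptional-degree bookkeeping, and within it the degree $2^6$ cases $\Sp(6,2)$ and $G_2(2)'$, for which the clean eigenvalue argument does not apply and one must verify by direct computation that a suitable fixed-point-free element is present; the rest of the argument is uniform and uses only transitivity together with Zsygmondy and Cauchy.
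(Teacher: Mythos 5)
Your proof is correct, but it takes a genuinely different route from the paper's. The paper argues by cases through the Huppert--Hering classification: in the soluble case it reduces to $G_0\leqslant \Gamma\mathrm{L}(1,q)$ and finds a nontrivial scalar via an index count, and in the insoluble case it runs through the possibilities $\SL(d,q)$, $\Sp(2m,q)$, $\mathrm{G}_2(q)$ for a normal subgroup of $G_0$, exhibiting semiregular elements inside extension-field (Singer-type) subgroups, with a bespoke octonion-algebra construction for $\mathrm{G}_2(q)$ and {\sc Magma} checks for the sporadic degrees. You instead isolate the criterion that an element of $G_0$ of prime order $r$ without eigenvalue $1$ is a {\qsre}, note that transitivity forces $(p^d-1)\mid|G_0|$, and invoke Zsigmondy plus Cauchy to produce an element of primitive-prime-divisor order whose Galois-orbit of eigenvalues fills the whole space; this settles every degree except the Zsigmondy exceptions. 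The classification is then needed only for $d=2$ with $p$ Mersenne (where each transitive subgroup of $\GL(2,p)$ either lies in $\Gamma\mathrm{L}(1,p^2)$, handled by an index/parity count, or contains $-I$ as the central involution of a normal $Q_8$, $\SL(2,3)$, $\SL(2,5)$ or $\SL(2,p)$) and for degree $2^6$, which you check by hand via the same eigenvalue or stabiliser-order criterion. Your approach buys a classification-free, computation-free treatment of the generic case and replaces the paper's {\sc Magma} verifications by explicit arguments, at the cost of some bookkeeping at the exceptional degrees; the paper's approach locates {\qsres} inside structurally natural subgroups in each Hering class. Both are complete proofs; the only cosmetic point is the spelling Zsigmondy.
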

\begin{proof}
First suppose that $G$ is soluble. Apart from exceptional examples occurring for degrees $3^2$, $5^2$, $7^2$, $11^2$, $23^2$, $3^4$, we may assume that $G \leqslant \mathrm{A}\Gamma\mathrm{L}(1,q)$    \cite[XII, 7.3]{HuppertBlackburn}. For the exceptional examples, we confirm the validity of the  theorem with {\sc Magma} \cite{Magma} using the database of primitive permutation groups \cite{Colva}. In the general case,  write $G = V G_0$, where $V=\mathbb F_q$ and $G_0 \leqslant \Gamma\mathrm{L}(1,q)$. Let $p$ and $f$ be such that $q=p^f\geqslant3$. Then $\Gamma\mathrm{L}(1,q) \cong C_{p^f-1}\rtimes C_f$. Now $|G_0 : G_0\cap \GL(1,q)| \leqslant f$, and by $2$-transitivity, $|G_0|$ is divisible by $p^f-1$. It follows that $|G_0 \cap \GL(1,q)| > 1$.  Since all nonidentity elements of $\GL(1,q)$ are {\qsr}, it follows that $G$ contains a {\qsre}. 

Now suppose that $G=VG_0$ is insoluble (and hence $G_0$ is insoluble). For the groups $G$ that do not belong to an infinite family, we use {\sc Magma} (as described above) to confirm the truth of the result. Thus we may assume  that $G_0$ contains an insoluble normal subgroup $H$ isomorphic to $\SL(d,q)$, $\Sp(2m,q)$ or $\mathrm G_2(q)$. In the first case, an extension field subgroup  $\GL(1,q^d) \cap H\neq 1$ contains {\qsres} of prime order. In the second case, there is an extension field subgroup $X \cong \GL(1,q^{2m})$ embedded in $\GL(2,q^m)$.  Since $\SL(2,q^m)=\Sp(2,q^m)$, the subgroup  $X \cap \Sp(2,q^m)\neq 1$ embeds into $\Sp(2m,q)$. Since elements of $X \cap \Sp(2,q^m) \leqslant \Sp(2m,q)$ act semiregularly on $V$, there exist {\qsres} of prime order in this case also.

Finally, assume that $H=\mathrm G_2(q)$ with $q$  a power of $2$ and $|V|=q^6$. We use Wilson's notation \cite[4.4.3]{WilsonBook}  for $\mathrm G_2(q)$. Here $H$ acts on the $8$-dimensional Octonion algebra over $\mathbb F_q$ spanned by $x_1,\ldots,x_8$, and the space $V = \langle x_4 + x_5 \rangle^\perp / \langle x_4 + x_5 \rangle$. 
Let $V_1=\langle x_4+x_5\rangle$,  $U=\langle V_1, x_1,x_6,x_7\rangle$ and $W=\langle V_1, x_2, x_3, x_8 \rangle$. Then $V = U/V_1 \oplus W/V_1$ and the subgroup $X$ of $H\cong \mathrm G_2(q)$ preserving this decomposition satisfies $X \cong \SL(3,q) :2$. Elements of the derived subgroup $X'$ act naturally on $U$ and dually on~$W$. In particular, there are elements of order dividing $\frac{q^3-1}{q-1}$ that act semiregularly on the non-zero vectors of $U$, and therefore they are also semiregular on the non-zero vectors of $W$; hence these elements are also semiregular on the non-zero vectors of $V$. Thus $X$, and hence $G$, contains {\qsres}.
\end{proof}

\subsection{$\frac{3}{2}$-transitive affine groups}
\label{sec: 3/2-trans}

Recall from the introduction that $G\leqslant \Sym(\Omega)$ is $\frac{3}{2}$-transitive if $G$ is a nonregular transitive group such that for any $\alpha \in \Omega$, all   non-trivial $G_\alpha$ orbits  have the same size. This class of permutation groups was first defined by Wielandt \cite[\S10]{Wielandt}, who proved that a $\frac{3}{2}$-transitive group is either primitive or Frobenius. Following work of Passman \cite{Passman1,Passman2} (who classified the soluble $\frac{3}{2}$-transitive groups), Bamberg, Giudici, Liebeck, Praeger, Saxl \cite{Bambergetal} and Giudici, Liebeck, Praeger, Saxl, Tiep \cite{Giudicietal}, the full classification was finally completed in 2019 by Liebeck, Praeger and Saxl \cite{LPS2019}.

Here, we focus on the affine case. The following theorem (which includes the aforementioned works) is quoted from \cite{LPS2019}. In the statement, $S_0(q)$ is the subgroup of $\GL(2,q)$ consisting of the $4(q-1)$ monomial matrices of determinant $\pm1$. 

\begin{thm}[{\cite[Corollary 2]{LPS2019}}]
\label{thm: 3on2 thm}
Let $G$ be a $\frac{3}{2}$-transitive group of  affine type and  degree $p^d$, where $p$ is a prime.  Suppose that $G=VG_0$ with $V$ a  regular normal subgroup of $G$. Then at least one of the following holds:
\begin{enumerate}[(1)]
    \item $G$ is $2$-transitive;
    \item $G$ is a Frobenius group;
    \item $G_0\leqslant \Gamma\mathrm{L}(1,p^d)$;
    \item $G_0=S_0(p^{d/2})$ with $p$ odd;
    \item $G_0$ is soluble and $p^d=3^2,5^2,7^2,11^2,17^2$ or $3^4$;
    \item $\SL(2,5) \unlhd G_0\leqslant \Gamma\mathrm{L}(2,p^{d/2})$ where $p^{d/2}=9,11,19,29$ or $169$.
\end{enumerate}
\end{thm}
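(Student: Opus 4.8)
The plan is to translate the $\frac{3}{2}$-transitivity hypothesis into a statement about the linear group $G_0$ and then classify the possibilities using the structure theory of linear groups together with the Classification of Finite Simple Groups. Write $V=\mathbb{F}_p^d$, so that $G_0\leqslant\GL(d,p)$ is the stabiliser of the origin and the nontrivial suborbits of $G$ are exactly the orbits of $G_0$ on $V\setminus\{0\}$. Thus $G$ is $\frac{3}{2}$-transitive precisely when $G$ is nonregular (equivalently $G_0\neq 1$) and $G_0$ is \emph{$\frac{1}{2}$-transitive}, meaning all of its orbits on $V\setminus\{0\}$ share a common length $r$; since these orbits partition $V\setminus\{0\}$ we get the useful constraint $r\mid p^d-1$. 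Two degenerate cases are disposed of at once: if $G_0$ is transitive on $V\setminus\{0\}$ then $G$ is $2$-transitive, conclusion~(1), and the affine $2$-transitive groups are known by Hering and Huppert \cite{Hering,Huppert}; while if $G_0$ acts semiregularly then $G$ is Frobenius, conclusion~(2). By Wielandt's theorem \cite[\S10]{Wielandt} a $\frac{3}{2}$-transitive group is primitive or Frobenius, so outside conclusion~(2) we may assume $G$ is primitive, i.e. $G_0$ acts \emph{irreducibly} on $V$. It then remains to classify the irreducible $\frac{1}{2}$-transitive subgroups $G_0\leqslant\GL(d,p)$ that are neither transitive nor semiregular on $V\setminus\{0\}$.

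The next step is to split according to the structure of $G_0$ as a linear group. When $G_0$ is soluble I would invoke Passman's determination of the soluble $\frac{3}{2}$-transitive groups \cite{Passman1,Passman2}, refined to the irreducible, non-transitive, non-Frobenius case; these yield the semilinear groups $G_0\leqslant\Gamma\mathrm{L}(1,p^d)$ of conclusion~(3), the monomial groups $G_0=S_0(p^{d/2})$ of conclusion~(4), and the finite list of small soluble exceptions of conclusion~(5) in degrees $3^2,5^2,7^2,11^2,17^2,3^4$, the last of which I would confirm by a finite computation. The genuinely hard case is when $G_0$ is insoluble. Here I would analyse the irreducible action through an Aschbacher-style dichotomy: either $G_0$ respects an extension-field or tensor/imprimitive structure, reducing the problem to a group acting over a larger field or in smaller dimension—which either recovers the semilinear and monomial families of~(3) and~(4) or is treated inductively—or $G_0$ is \emph{nearly simple}, meaning that $F^*(G_0)$ is quasisimple and acts irreducibly, with $G_0$ almost simple modulo scalars.

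In the nearly-simple case the core of the argument is to exploit how strong the $\frac{1}{2}$-transitivity condition is. The key reduction is that a normal quasisimple subgroup $L\trianglelefteq G_0$ inherits tight orbit constraints: the $G_0$-orbits on $V\setminus\{0\}$ are permuted among the $L$-orbits, and equality of all $G_0$-orbit lengths, combined with Clifford theory, forces the $L$-orbit lengths to be tightly controlled. One then has to decide, for each quasisimple group $L$ and each faithful irreducible $\mathbb{F}_p$-representation, whether $L$, or $L$ extended by scalars and field automorphisms inside $G_0$, can be $\frac{1}{2}$-transitive while intransitive. I would carry this out using the divisibility $r\mid p^d-1$ together with detailed orbit and character data for the low-dimensional representations supplied by CFSG, following the analysis of \cite{Bambergetal,Giudicietal}. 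The outcome is that, apart from transitive examples already absorbed into conclusion~(1), the only surviving family is $\SL(2,5)\trianglelefteq G_0\leqslant\Gamma\mathrm{L}(2,p^{d/2})$ in the five degrees $p^{d/2}=9,11,19,29,169$ of conclusion~(6), with the exceptional degrees pinned down by explicit computation.

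The main obstacle is exactly this insoluble nearly-simple analysis. The difficulty is that $\frac{1}{2}$-transitivity is a global numerical coincidence—\emph{all} orbits having equal length—rather than a local condition, so it cannot be verified component-by-component without care, and checking it (or ruling it out) for each simple group and each representation demands precise control of orbit sizes that in turn rests on CFSG and on the representation theory of quasisimple groups in both defining and cross characteristic. Organising the argument so that the extension-field and tensor cases interlock cleanly with the nearly-simple case, ensuring that no configuration is double-counted and none is missed, is the principal bookkeeping challenge, and is the reason the full classification required the sequence of papers \cite{Passman1,Passman2,Bambergetal,Giudicietal,LPS2019} to complete.
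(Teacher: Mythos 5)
First, a point of comparison that matters: the paper contains no proof of this statement. It is imported verbatim from \cite[Corollary 2]{LPS2019}, as the theorem's attribution and the surrounding text (``quoted from \cite{LPS2019}'') make explicit, so there is no internal argument to measure your attempt against. What you have written is a reconstruction of how the literature proves it, and the parts you actually carry out are correct: the translation of $\frac{3}{2}$-transitivity of $G$ into $\frac{1}{2}$-transitivity of $G_0$ on $V\setminus\{0\}$ with the divisibility $r\mid p^d-1$, the dispatch of the transitive and semiregular extremes to conclusions (1) and (2), and the use of Wielandt's primitive-or-Frobenius dichotomy to reduce to irreducible $G_0$ are all sound, and citing \cite{Passman1,Passman2} for the soluble conclusions (3)--(5) is legitimate since those papers predate and genuinely establish that case.

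The genuine gap is in the insoluble case, which you correctly identify as the core but then do not prove: resolving it ``following the analysis of \cite{Bambergetal,Giudicietal}'' plus ``explicit computation'' is circular, because those papers together with \cite{LPS2019} \emph{are} the proof of the theorem being attributed --- deferring to them restates the citation rather than supplying an argument. Nothing in your sketch actually executes an orbit-length or character-theoretic estimate; in particular, no step explains why $\SL(2,5)$ survives precisely for $p^{d/2}\in\{9,11,19,29,169\}$ or why every other quasisimple candidate is either transitive (absorbed into (1)) or fails $\frac{1}{2}$-transitivity. There is also a concrete structural slip: your claimed dichotomy --- extension-field/tensor/imprimitive structure versus nearly simple --- is not the full Aschbacher division for irreducible subgroups of $\GL(d,p)$. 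Normalisers of symplectic-type $r$-groups (class $\mathcal{C}_6$) and tensor-induced subgroups fall outside both horns as you state them, and insoluble extraspecial normalisers are a genuine case in the half-transitive analysis that must be treated separately (soluble ones lurk behind the degree-$3^4$ exception in (5)). So even as a roadmap the case division is leaky, and as a proof the proposal defers its decisive step to the theorem's own source.
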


By considering the individual groups in the theorem above, we find that all contain {\qsres}, proving the theorem mentioned in the introduction, which we restate for convenience.

\smallskip

\noindent {\bf Theorem}~\ref{intro thm:3/2}.     Let $G$ be a $\frac{3}{2}$-transitive group of  affine type and  degree $p^d$. Then $G$ contains a {\qsre}.
\smallskip
\begin{proof}
If $G$ is $2$-transitive the result follows from Theorem~\ref{thm: 2-trans affine thm}. If $G$ is a Frobenius group, then as observed in the introduction, $G$ contains a {\qsre}.

Suppose that case 3~holds so $G_0\leqslant \Gamma\mathrm{L}(1,p^d)=\GL(1,p^d)\rtimes C_d$. If $G_0 \cap \GL(1,p^d) \neq 1$ then $G_0$ contains a nonidentity scalar, which is {\qsr}. Otherwise, $G_0\cap \GL(1,p^d)=1$, so $G_0$ is cyclic of order dividing $d$. Also $G_0$ leaves invariant no proper nontrivial subspace of $\mathbb{F}_p^d$,  since $G$ is  primitive. In particular, $d$ is not a power of $p$, and hence there is a prime $r\mid |G_0|$, with $r\ne p$. Thus the Sylow $r$-subgroups of $G_0$ are Sylow $r$-subgroups of $G$. Since $G$ is primitive, $G_0=N_G(\langle x \rangle)$, where $x$ is an element of order $r$ in $G_0$, and hence $x$ is {\qsr} by Corollary~\ref{cor: normalisers equals implies qsr}.

In cases (4) and (6), $-I \in G_0$  and $-I$ is {\qsr}; while if case (5)~holds, a calculation using {\sc Magma} and the database of primitive groups \cite{Colva} confirms the result.
\end{proof}

\section{A reduction theorem}
\label{sec: reduction}

In this section we first consider finite permutation groups  that leave invariant a product structure on the underlying set. This allows us to settle the question of existence of {\qsres} for many types of primitive groups, and essentially reduces the question to almost simple groups and affine groups.

\begin{thm}\label{t:pa}
Let $G= \Sym(k)\wr\Sym(\ell)$ in its natural action on $[k^\ell]$, where $k\geqslant3$ and $\ell\geqslant2$. Then $G$ contains {\qsres}. Moreover, if $g\in G$ is a {\qsre} of prime order, then  $g=(h_1,\ldots,h_\ell)\in \Sym(k)^\ell$ and   $h_i\in \Sym(k)$ is  {\qsr} for all $i$ such that $1\leqslant i\leqslant \ell$.
\end{thm}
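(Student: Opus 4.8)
The plan is to reduce everything to counting fixed points, using the elementary fact that an element $g$ of prime order $p$ is {\qsr} if and only if it has exactly one fixed point: since $p$ is prime, every $\langle g\rangle$-orbit has length $1$ or $p$, so a single fixed point forces all other orbits to have length $p$, i.e.\ $g$ is semiregular off its fixed point. Existence is then immediate: choose a prime $p$ dividing $k-1$ (possible as $k\geqslant 3$) and let $h\in\Sym(k)$ have cycle type $1^1p^{(k-1)/p}$, so $h$ is {\qsr}. The element $g=(h,\dots,h)\in\Sym(k)^\ell$ with trivial top component has order $p$, and a tuple is fixed by $g$ precisely when each of its coordinates is fixed by $h$; since $h$ fixes a unique point, $g$ fixes a unique tuple and is therefore {\qsr}.

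For the structural (``moreover'') statement, I would first record the fixed-point count for a general element $g=(h_1,\dots,h_\ell;\sigma)$ of the product action on $[k^\ell]$. Writing a tuple as $(a_1,\dots,a_\ell)$, the condition that $g$ fixes it chains together the coordinates lying in a single $\sigma$-cycle $C=(j_1,\dots,j_r)$: once $a_{j_1}$ is chosen, the remaining coordinates on $C$ are determined, and the closure condition around the cycle is exactly that $a_{j_1}$ is fixed by the cycle product $\hat h_C:=h_{j_1}h_{j_2}\cdots h_{j_r}$. Hence the number of fixed tuples is the product over the $\sigma$-cycles $C$ of $|\mathrm{Fix}(\hat h_C)|$, where on a trivial cycle $\{j\}$ one takes $\hat h_C=h_j$.

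Now suppose $g=(h_1,\dots,h_\ell;\sigma)$ is a {\qsre} of prime order $p$; by the first paragraph $g$ has a unique fixed point, so each factor $|\mathrm{Fix}(\hat h_C)|$ above equals $1$. The crux is to show $\sigma=1$. If not, then since the projection to $\Sym(\ell)$ is a homomorphism, $\sigma$ has order $p$ and hence at least one nontrivial ($p$-)cycle $C$. The relation $g^p=1$ forces the base-group part of $g^p$ to be trivial; on the cycle $C$ this base part is precisely the cycle product $\hat h_C$, so $\hat h_C=1$. But then $|\mathrm{Fix}(\hat h_C)|=|\mathrm{Fix}(1)|=k\geqslant 3>1$, contradicting that $g$ has a unique fixed point. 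Therefore $\sigma=1$ and $g=(h_1,\dots,h_\ell)\in\Sym(k)^\ell$. In this case the number of fixed tuples is $\prod_i|\mathrm{Fix}(h_i)|=1$, so every $|\mathrm{Fix}(h_i)|=1$; as each $h_i$ has order dividing $p$, and $h_i=1$ is excluded (it would fix all $k\geqslant 3$ points), each $h_i$ is a nonidentity element of order $p$ with a single fixed point, hence has cycle type $1^1p^{(k-1)/p}$ and is {\qsr}, as required.

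The main obstacle is the cycle-product observation in the third paragraph: one must pin down that $g^p=1$ forces $\hat h_C=1$ on each nontrivial $\sigma$-cycle, and that a trivial cycle product creates $k$ fixed tuples on those coordinates. Once this is in place, the prime-order reduction converts the whole statement into a short fixed-point count, and the only remaining care is bookkeeping with the chosen convention for the product action so that the cycle product and the base part of $g^p$ are correctly identified.
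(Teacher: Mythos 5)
Your proof is correct and follows essentially the same route as the paper's: both reduce to the decomposition of $\sigma$ into cycles, the observation that $g^p=1$ forces the cycle product $h_{j_1}h_{j_2}\cdots h_{j_r}$ over any nontrivial $\sigma$-cycle to be trivial, and the resulting surplus of fixed tuples. Your global fixed-point count $\prod_C|\mathrm{Fix}(\hat h_C)|$ is a clean, systematic packaging of what the paper does by exhibiting one explicit extra fixed tuple $y=(\beta,\beta^{h_1},\ldots,\beta^{h_1\cdots h_{p-1}},\alpha,\ldots,\alpha)$, and your preliminary remark that a prime-order element is quasi-semiregular if and only if it has exactly one fixed point makes the final step (each $|\mathrm{Fix}(h_i)|=1$, hence each $h_i$ quasi-semiregular) immediate.
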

\begin{proof}
    Set $H=\Sym(k)$. We view elements of the set   $[k^\ell]$   as tuples $(\alpha_1,\ldots,\alpha_\ell)$ with $\alpha_i\in [k]$ for each $i$. We write elements of $G$ as products $h\sigma $ where $h\in H^\ell$ and $\sigma \in \Sym(\ell)$.  The elements of $G$  act on $[k^\ell]$ via the following rule. For $x=(\alpha_1,\ldots,\alpha_\ell)\in [k^\ell]$ and $h=(h_1,\ldots,h_\ell)\in H^\ell$ we have $x^h :=  (\alpha_1^{h_1},\ldots,\alpha_\ell^{h_\ell})$ and $x^\sigma  = (\alpha_{1\sigma^{-1}},\ldots,\alpha_{\ell\sigma^{-1}})$. Furthermore, $\sigma \in \Sym(\ell)$ acts by conjugation on $ H^\ell$ via $h^\sigma  := (h_{1\sigma^{-1}},\ldots,h_{\ell\sigma^{-1}})$.

    For each $i=1,\dots, \ell$, let  $h_i \in H$ be quasi-semiregular with unique fixed point $\alpha_i \in [k]$ (note that such an $h_i$ exists since $H=\Sym(k)$ and $k\geqslant3$). Define $h=(h_1,\ldots,h_\ell)$. Then $h$ fixes $x:=(\alpha_1,\ldots,\alpha_\ell)\in [k^\ell]$.  Let $y = (\beta_1,\ldots,\beta_\ell) \in  [k^\ell]$  such that $y \neq x$. Then there exists $i$ such that  $\beta_i\neq \alpha_i $, and hence $\beta_i^{h_i} \neq \beta_i$ since $\alpha_i$ is the unique point of $[k]$ fixed by $h_i$. Thus $y^h \neq y$ and so  $h$ is a {{\qsre}} in $G$.

    Conversely suppose that $g\in G$ is a {\qsre} of prime order $p$. After conjugating by some element of $G$, we may assume that     $g$ fixes the point $x=(\alpha,\ldots,\alpha)$ for some $\alpha \in [k]$. Write $g=h\sigma$ with $h=(h_1,\ldots,h_\ell) $ and note that $\alpha^{h_i} = \alpha$ for $1\leqslant i \leqslant\ell$. Since $g^p=1$ we have $\sigma^p=1$. In the action of $\sigma$ on $\ell$ points, suppose that $\langle \sigma\rangle$ has $a$ orbits of size $p$ and $b$ orbits of size $1$ (so $ap+b=\ell$). 
    Suppose that $a\geqslant 1$. After relabelling, we may assume  that $\sigma$ contains the $p$-cycle $(1,\ldots,p)$ (on $\ell$ points). 
Now we have:  
\[
1=g^p=h\sigma h \sigma \ldots h\sigma =h h^{\sigma^{-1}} \ldots h^{\sigma^{-(p-1)}}   \sigma^p = h h^{\sigma^{-1}} \ldots h^{\sigma^{-(p-1)}} 
\]
and we note that, for all $i$ such that $1\leqslant i\leqslant p-1$
$$
h^{\sigma^{-i}}=(h_{1 \sigma^i},\ldots,h_{\ell\sigma^{i}})=(h_{1+i},\ldots,h_p,h_1,\ldots,h_i,h_{(p+1)\sigma^i},\ldots,h_{\ell \sigma^i}).
$$
Thus the first entry of the $\ell$-tuple $h h^{\sigma^{-1}} \cdots h^{\sigma^{-(p-1)}}$ is $h_1h_2\cdots h_p$. Since $h h^{\sigma^{-1}} \cdots h^{\sigma^{-(p-1)}} = g^p = 1$, it follows that $h_1h_2\cdots h_p=1$.
Pick  $\beta \neq \alpha$,  set 
    \[y= (\beta,\beta^{h_1},\beta^{h_2},\ldots,\beta^{h_1h_2\cdots h_{p-1}},\alpha,\ldots,\alpha)\]
    and note that  
\begin{align*}
       y^{h\sigma} &= (\beta^{h_1},\beta^{h_1h_2},\ldots,\beta^{h_1 h_2 \cdots h_p},\alpha^{h_{p+1}},\ldots,\alpha^{h_\ell})^\sigma \\
       & = (\beta^{h_1\cdots h_p},\beta^{h_1},\ldots,\beta^{h_1 \cdots h_{p-1}},\alpha,\ldots,\alpha).
\end{align*}
Since $\beta^{h_1\cdots h_p}=\beta$, we have $y^g = y$. Now $\beta \neq \alpha$ so $y\neq x$, and this is a contradiction to $g $ being {\qsr}. It follows that $a=0$ so that $\sigma =1$ and $h=(h_1,\ldots,h_\ell) \in \Sym(k)^\ell$. 
Finally, since $h$ is {\qsr}, for any $\beta \neq \alpha$ we have $(\beta,\alpha,\ldots,\alpha)\neq (\beta,\alpha,\ldots,\alpha)^h = (\beta^{h_1},\alpha,\ldots,\alpha) $ so that $\beta^{h_1}\neq  \beta$. Thus, $h_1$ is {\qsr}. Similarly $h_i$ is {\qsr} for each $i$.
\end{proof}

\begin{rem}
    If $g\in \Sym(k)\wr\Sym(\ell)$ is {\qsr} and of square free order, then we may write $g=g_1\ldots g_r$ for some pairwise commuting elements $g_i$ each of prime order. Since each $g_i$ is a power of $g$, it is itself \qsr. Then the above theorem shows $g_i \in \Sym(k)^\ell$ for each $i$ and so  $g\in \Sym(k)^\ell$. However, it is possible for {\qsres} to lie outside of $\Sym(k)^\ell$. Indeed, let $G=\Sym(5)\wr \Sym(2)$ and set $g=h\sigma$ where $h=((1,2),(3,4))$ and $\sigma =(1,2)\in \Sym(2)$. Then $g$ has order $4$ and $g^2=((1,2)(3,4),(1,2)(3,4))$ is {\qsr} with unique fixed point $(5,5)$. Hence $g$ is a {\qsre} and clearly $g\notin \Sym(5)^2$. Further, we see that no entry of $h$ is a {\qsre}.
\end{rem}

\begin{cor}
\label{cor: pa groups}
Suppose that $G$ is a primitive group of type PA preserving a product structure $\Delta^I$. Let $H\leqslant \Sym(\Delta)$ be such that $G \leqslant H \wr \Sym(I)$. Suppose that $G$ has a {\qsre}. Then $H$ contains {\qsres} and all {\qsres} of prime order in $G$ lie in the base group $H^I$.
\end{cor}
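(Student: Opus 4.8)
The plan is to transport the structural description of {\qsres} available for the full wreath product, supplied by Theorem~\ref{t:pa}, down to the subgroup $G$. Write $k=|\Delta|$ and $\ell=|I|$, and identify $\Delta^I$ with $[k^\ell]$ so that
$G\leqslant H\wr\Sym(I)\leqslant \Sym(\Delta)\wr\Sym(I)=\Sym(k)\wr\Sym(\ell)$,
all acting on the common set $\Delta^I$ in the same way. Since $G$ is of type PA, the component acts as a primitive almost simple group on $\Delta$, so $k\geqslant5\geqslant3$, and $\ell\geqslant2$; hence Theorem~\ref{t:pa} applies to the top wreath product. Throughout I will use the fact that being {\qsr} is a property of a permutation on $\Delta^I$ and so is independent of the overgroup in which the permutation is viewed.

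First I would reduce to prime order, which also establishes the existence claim for $H$. Given a {\qsre} $x\in G$ of order $m$ and a prime $p\mid m$, consider $y=x^{m/p}$, which has order $p$. The unique fixed point of $x$ is still fixed by $y$, while every non-fixed point lies in an $\langle x\rangle$-orbit of length $m$, on which $\langle y\rangle$ (of order $p$) acts with orbits of length $p$; no further point is fixed. Thus $y$ is again {\qsr}, now of prime order, so it suffices to prove the statement for a {\qsre} $g\in G$ of prime order $p$.

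Regarding such a $g$ as a permutation of $\Delta^I=[k^\ell]$, it is a {\qsre} of the full wreath product $\Sym(k)\wr\Sym(\ell)$, so Theorem~\ref{t:pa} yields $g=(h_1,\ldots,h_\ell)\in\Sym(\Delta)^I$ with each $h_i\in\Sym(\Delta)$ {\qsr}. The remaining point is to locate $g$ inside $H\wr\Sym(I)$. An element $h\sigma$ of $H\wr\Sym(I)$, with $h\in H^I$ and $\sigma\in\Sym(I)$, lies in the base group $\Sym(\Delta)^I$ of the larger wreath product exactly when $\sigma=1$; hence $(H\wr\Sym(I))\cap\Sym(\Delta)^I=H^I$. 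Since $g\in G\leqslant H\wr\Sym(I)$ and $g\in\Sym(\Delta)^I$, we conclude $g\in H^I$. Thus $g$ lies in the base group, each coordinate $h_i$ lies in $H$, and as each $h_i$ is {\qsr} on $\Delta$ the group $H$ contains {\qsres}, giving both assertions.

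I do not anticipate a genuine obstacle, since Theorem~\ref{t:pa} already carries the combinatorial weight. The only points requiring care are the elementary intersection identity $(H\wr\Sym(I))\cap\Sym(\Delta)^I=H^I$ and the observation that {\qsr}-ness is intrinsic to the permutation, so that it passes freely between $G$ and its overgroups acting on the shared set $\Delta^I$; the prime-order reduction is then what lets us invoke the strong conclusion of Theorem~\ref{t:pa}.
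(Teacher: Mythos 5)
Your proof is correct and follows essentially the same route as the paper: reduce to a {\qsre} of prime order, invoke Theorem~\ref{t:pa} to place $g$ in $\Sym(\Delta)^I$ with each coordinate {\qsr}, and intersect with $G\leqslant H\wr\Sym(I)$ to land in $H^I$. The only difference is that you spell out the prime-order reduction and the intersection identity, which the paper leaves implicit.
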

\begin{proof}
 If $G$ contains a {\qsre}, then $G$ contains a  {\qsre} $g$ of prime order. Since $G \leqslant \Sym(\Delta)\wr \Sym(I)$, it follows from Theorem~\ref{t:pa} that   $g\in \Sym(\Delta)^I$ and, writing $g=(h_1,\ldots,h_\ell)$, each $h_i$ is {\qsr}. Hence $g\in \Sym(\Delta)^I \cap G \leqslant H^I$ and $h_i \in H$ is a {\qsre} for all $i$.
\end{proof}

Our next result concerns the primitive group of diagonal type. We first establish some notation.

\begin{defn}
Let $k\geqslant 2$ be an integer and let $T$ be a finite nonabelian simple group. We define $N=T^k$ and $D=\{(t,\ldots,t) \mid t \in T\}$, the `straight diagonal' subgroup of $N$. We set $\Omega =[N:D]$ so that $|\Omega|=|T|^{k-1}$. The group $\Sym(k)$ naturally acts on $\Omega$  via
$$\sigma  : D(t_1,\ldots,t_k) \mapsto D(t_{1\sigma^{-1}},\ldots,t_{k\sigma^{-1}})$$
and further, for  $\alpha \in \Aut(T)$, we have
$$\alpha : D(t_1,\ldots,t_k) \mapsto D(t_1^\alpha,\ldots,t_k^\alpha).$$
A `maximal'  SD group is $W = T^k.(\Out(T) \times \Sym(k))$,  generated by $N$ together with all $\sigma\in\Sym(k)$ and $\alpha\in \Aut(T)$,    and is a  primitive permutation group on $\Omega$. A  group  $G$ with  $N \leqslant G \leqslant W$  such that $G$ is  primitive  on the simple direct factors of $N$ is  a primitive  group   of SD  type. \end{defn}

\begin{thm}
\label{thm: sd groups}
Let  $W=T^k.(\Out(T)\times \Sym(k))$ be a maximal primitive group of  type SD, as above. 
Then $W$ contains a {\qsre} if and only if $k$ is prime and $|T|$ is coprime to $k$. Moreover, the {\qsres} in $W$ are precisely those that are conjugate to elements of $\Sym(k)$ of order $k$.
\end{thm}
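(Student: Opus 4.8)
The plan is to reduce everything to \qsres\ of prime order and to analyse them inside the stabiliser $W_D$ of the base point $D(1,\dots,1)$. Since every nonidentity power of a \qsre\ is again \qsr, $W$ contains a \qsre\ if and only if it contains one of prime order; and for an element $x$ of prime order $p$, being \qsr\ is equivalent to having a \emph{unique} fixed point, because then every nonidentity power of $x$ has the same fixed-point set and all remaining orbits have length $p$. A short order count identifies the stabiliser as $W_D=D.(\Out(T)\times\Sym(k))\cong\Aut(T)\times\Sym(k)$, the first factor being the diagonal copy of $\Aut(T)$, which commutes with the coordinate-permuting $\Sym(k)$. After conjugation a prime-order \qsre\ lies in $W_D$, so I write it as $x=(a,\tau)$ with $a\in\Aut(T)$ and $\tau\in\Sym(k)$, where $\mathrm{lcm}(|a|,|\tau|)=p$; in particular the cycles of $\tau$ have length $1$ or $p$, say $m$ of length $p$ and $f$ fixed points, with $k=mp+f$.

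The engine of the proof is the centraliser of $x$ in the socle $N=T^k$. Writing the conjugation action of $x$ on $N$ as ``apply $a$ in each coordinate and then permute the coordinates by $\tau$'', I would solve $n_i=n_{i\tau^{-1}}^{\,a}$ cycle by cycle: along each $p$-cycle the values are determined by the value at a base coordinate, whose only constraint is the vacuous $n^{a^{p}}=n$, contributing a factor $|T|$; each fixed coordinate must lie in $C_T(a)$. Hence $|C_N(x)|=|T|^{m}\,|C_T(a)|^{f}$. Since $C_{W_D}(x)=C_{\Aut(T)}(a)\times C_{\Sym(k)}(\tau)$ meets $N$ precisely in the diagonal $C_D(a)\cong C_T(a)$, the subgroup $C_N(x)C_{W_D}(x)\leqslant C_W(x)$ has index at least $|T|^{m}|C_T(a)|^{f-1}$ over $C_{W_D}(x)$. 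Feeding this into the fixed-point formula of Lemma~\ref{lem: x is qsres} gives the lower bound $\pi(x)\geqslant |C_W(x):C_{W_D}(x)|\geqslant |T|^{m}|C_T(a)|^{f-1}$.

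This single estimate eliminates the case $a\neq1$. Indeed, if $\tau=1$ then $m=0$, $f=k$ and the bound reads $|C_T(a)|^{k-1}\geqslant2$, using that a prime-order automorphism of the nonabelian simple group $T$ has nontrivial fixed points (Thompson); if $\tau$ has a fixed point but is nontrivial then the bound is at least $|T|\geqslant2$; and if $\tau$ is fixed-point-free then it is $|T|^{m}/|C_T(a)|\geqslant|T:C_T(a)|\geqslant2$ because $C_T(a)\neq T$. Thus $\pi(x)\geqslant2$ whenever $a\neq1$, so every prime-order \qsre\ has $a=1$, i.e.\ lies in $\Sym(k)$ up to conjugacy. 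For $x=\tau$ an exact count is available: with $a=1$ the condition for $D(t_1,\dots,t_k)$ to be fixed is $t_{i\tau^{-1}}=st_i$ for some $s\in T$, which is solvable exactly when $s^{d}=1$, where $d$ is the greatest common divisor of the cycle lengths of $\tau$, and normalising coset representatives yields $\pi(\tau)=|T|^{c-1}\,|\{s\in T:s^{d}=1\}|$ with $c$ the number of cycles of $\tau$. Hence $\pi(\tau)=1$ forces $c=1$ (so $\tau$ is a $k$-cycle and $k=p$ is prime) together with $|\{s:s^{p}=1\}|=1$ (so $p\nmid|T|$, i.e.\ $\gcd(|T|,k)=1$); conversely any such $k$-cycle, which exists in $\Sym(k)\leqslant W$, satisfies $\pi=1$ and so is \qsr.

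It remains to record the ``moreover'' statement, for which I must exclude \qsres\ of composite order. If $g$ is a \qsre\ of order $n$, then for each prime $q\mid n$ the power $g^{n/q}$ is a prime-order \qsre, hence (by the previous paragraph) conjugate to a $k$-cycle; this forces $q=k$, so $n$ is a power of $p=k$. Were $n=p^{e}$ with $e\geqslant2$, I would conjugate $g$ so that $g^{p^{e-1}}=\tau$ is a $p$-cycle; then $g\in C_W(\tau)=\mathrm{diag}(\Aut(T))\times\langle\tau\rangle\leqslant W_D$, so $g=(b,\tau^{j})$ and $g^{p^{e-1}}=(b^{p^{e-1}},1)$ has trivial $\Sym(k)$-component, contradicting $g^{p^{e-1}}=\tau$. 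Therefore every \qsre\ has order exactly $p=k$ and is conjugate to an element of $\Sym(k)$ of order $k$, as claimed. The main obstacle is the case $a=1$: there the centraliser estimate only yields $\pi(\tau)\geqslant|T|^{c-1}$, which is inconclusive for a single cycle, so the crux is the exact evaluation of $\pi(\tau)$ and the careful bookkeeping of the norm-type equation $s^{d}=1$ over the nonabelian group $T$ that produces the coprimality condition.
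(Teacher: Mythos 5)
Your proof is correct, and it arrives at the same structural conclusions as the paper---a prime-order {\qsre} lies, up to conjugacy, in the point stabiliser $W_D\cong\Aut(T)\times\Sym(k)$, has trivial $\Aut(T)$-component, and has $\Sym(k)$-component a single $k$-cycle---but by a genuinely different mechanism. The paper rules out the bad cases constructively: for $x=(\alpha,\sigma)$ with $\alpha\neq1$, or with $\sigma$ stabilising a proper subset of $\{1,\dots,k\}$, it exhibits a second fixed coset explicitly (using Thompson's theorem to produce a nontrivial fixed point of $\alpha$), and it gets the necessity of the coprimality condition from the congruence $|T|^{k-1}\equiv1\pmod p$. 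You instead run everything through the fixed-point-counting formula of Lemma~\ref{lem: x is qsres}: the bound $\pi(x)\geqslant|C_W(x):C_{W_D}(x)|\geqslant|T|^{m}|C_T(a)|^{f-1}$, obtained from the centraliser of $x$ in the socle, eliminates $a\neq1$ in one stroke (Thompson's theorem is needed only in the subcase $\tau=1$), and the exact formula $\pi(\tau)=|T|^{c-1}\,|\{s\in T:s^{d}=1\}|$ delivers both directions of the equivalence simultaneously; the paper's converse computation via the telescoping relations $t_{i+1}=s^{i}t_1$ and $s^{k}=1$ is essentially your count specialised to $c=1$. Your handling of the ``moreover'' clause (prime-order powers force the order to be a power of $k$, and then $C_W(\tau)=\mathrm{diag}(\Aut(T))\times\la\tau\ra$ excludes order $k^{e}$ with $e\geqslant2$) is a valid variant of the paper's argument, which instead kills $\alpha\neq 1$ by exhibiting the extra fixed coset $H(t,t^{\alpha},\dots,t^{\alpha^{p-1}})$. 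What your route buys is a uniform quantitative statement---$\pi$ is bounded or computed exactly for every prime-order element of the stabiliser---at the cost of somewhat heavier centraliser bookkeeping; the paper's route is more hands-on but requires inventing a new fixed coset in each separate case.
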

\begin{proof}
Let $\alpha$  be the coset $D$ in $N$ and let $H=W_\alpha$ denote a point-stabiliser in $W$. Then $H= \Aut(T) \times \Sym(k)$, where $k$ is the integer such that $\soc(W)=T^k$ and $T$ is a finite non-abelian simple group. 
Since $W=H\soc(W)$, for any $g\in W$ we have $Hg=H(t_1,\ldots,t_k)$ for some $t_i\in T$. Moreover, for any $(\alpha,\sigma)\in H$ we  have $H(t_1,\ldots,t_k)(\alpha,\sigma)=H(t_{1\sigma^{-1}}^\alpha,\ldots,t_{k\sigma^{-1}}^\alpha)$.

Suppose that $W$ contains a {\qsre}. Then we may assume there is $x\in W$ that is {\qsr} and  has prime order $p$. Now $|\Omega|=|T|^{k-1}$, and since $x$ is {\qsr}, we have $|\Omega|\equiv 1\pmod{p}$. Thus 
\begin{equation}
    \label{eq: T order coprime to p}
    |T| \text{ is coprime to  }p.
\end{equation}
Now $x$ fixes a unique $H$-coset, and since $\soc(W)$ acts transitively we may replace $x$ by a $\soc(W)$-conjugate if necessary, so that $x\in H$.
Write $x = ( \alpha, \sigma)$ where $\alpha\in\Aut(T)$
and $\sigma \in \Sym(I)$, where $I=\{1,\ldots,k\}$. Then $\sigma^p=1$ and $\alpha^p=1$.  We claim there exists $t\in T\setminus\{1\}$ such that $t^\alpha =t$. Indeed, if $\alpha=1$ then any non-trivial element suffices, and if $\alpha\neq 1$, then $|\alpha|=p$ and Thompson's Theorem \cite[Theorem 1]{Thompson} supplies such a $t$.  Suppose that $\sigma$ stabilises a proper subset $I' \subset I$. Set $t_i = t$ for $i\in I'$ and $t_j = 1$ for $j\notin I'$. Then
$$
H(t_1,t_2,\ldots,t_k) (\alpha,\sigma) = H(t_1,\ldots,t_k) \neq H
$$
and so $x$ is not {\qsr}. Thus no proper subset of $I$ is fixed by $\sigma$. Since $k>1$ and $\sigma^p=1$, we have  shown that  
\begin{equation}
    \label{eq: k=p}
    k=p \quad \text{and} \quad \sigma \neq 1.
\end{equation}
Together,  \eqref{eq: T order coprime to p} and \eqref{eq: k=p}   prove the forward direction of the first part  of  the  theorem.

Conversely, suppose that $k$ is prime,  that $|T|$ is coprime to $k$,  and consider the element $x=(1,\sigma)$ of $H$ of prime order $k$. We may assume that $\sigma$ is the $k$-cycle $(1,\ldots,k)$. Let $t_1,\ldots,t_k \in T$ be such that
$$H(t_1,\ldots,t_k) = H(t_1,\ldots,t_k)x = H(t_k,t_1,\ldots,t_{k-1}).$$
Then  $(t_1 t_k^{-1},t_2t_1^{-1},t_3t_2^{-1},\ldots,t_kt_{k-1}^{-1})\in H\cap \soc(G)$ and hence is equal to $(s,\ldots,s)$ for some $s\in T$. This implies that $t_2 =st_1$, that $t_3 = st_2 = s^2 t_1$, etc. In particular, we obtain $t_k = s^{k-1}t_1$ and so  $t_1 t_k^{-1}=s$ yields $s^k =1$. Since   $T$ has no elements of order $k$, we have $s=1$ and hence $t_1=t_2 =\cdots = t_k$ and so the only coset fixed by $x$ is $H$. Thus $x$ is {\qsr}.

Finally, we prove the moreover part of the theorem. We assume therefore that $k$ is prime and $|T|$ is coprime to $k$. Now let $x=(\alpha,\sigma)$ be a {\qsre} of $H$ of arbitrary order. Since every non-identity power of $x$ is a {\qsre}, the first part of the theorem shows   that $x$ is a $k$-element, say $|x|=k^a$. If $a>1$, the element  $x^{|x|/k} =(\alpha^{|x|/k},1)$ is {\qsr}, a contradiction to \eqref{eq: k=p}. Hence $x$ has prime order $k$. 
After conjugation by an element of  $H$, we may assume that $\sigma$ induces the $p$-cycle $(1,\ldots,p)$ on $I$. If $\alpha\neq 1$, then there is an element $t\in T\setminus\{1\}$ such that $t^\alpha \neq t$. Set $t_i = t^{\alpha^{i-1}}$. Then the coset $H(t_1,\ldots,t_p)$ is fixed by $(\alpha,\sigma)$ and is distinct from the coset $H$. Hence  $\alpha=1$ and $x=(1,\sigma)$. 
This proves that every  {\qsre} in $W$ is conjugate   to an element of  $\Sym(k)$ of order  $k$.
\end{proof}

\begin{rem}\label{rem:noSDeg}
 We note that not every primitive group of type SD with $k=p$ a prime and $|T|$ coprime to $p$ contains a \qsre. For example, if $T$ has an automorphism $\alpha$ of order $p$ then let $G=\langle T^p, (\alpha,(1,2,\ldots, p))\rangle\leqslant W$. Then a Sylow $p$-subgroup of $G$ is conjugate to $\langle (\alpha,(1,2,\ldots, p))\rangle$ and is not conjugate under an element of $W$ to an element of $\Sym(p)$. Thus by Theorem \ref{thm: sd groups}, $G$ does not contain a \qsre. One such instance of a suitable $T$ and $p$ is $T=\PSL_2(2^5)$ and $p=5$. 
\end{rem}

\begin{cor}\label{cor:CD}
Primitive groups of type CD with socle $N=T^m$ and $N_\alpha = T^\ell$, where $m=k \ell$ for some $k,\ell\geqslant2$,  have a {\qsre}   only if $k$ is prime and   $|T|$ is coprime to $k$.
\end{cor}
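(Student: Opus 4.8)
The plan is to exploit the product-action structure of CD groups to reduce to the simple-diagonal building block, and then apply Theorem~\ref{thm: sd groups}. First I would record the relevant structure: a primitive group $G$ of type CD as in the statement is, up to conjugacy, a subgroup of $H\wr\Sym(\ell)$ acting on $\Delta^\ell$, where $H$ is a primitive group of type SD with $\soc(H)=T^k$ acting on $\Delta=[T^k:D]$ (so that $|\Delta|=|T|^{k-1}$) and $D\cong T$ is the straight diagonal. Indeed, then $\soc(G)=\soc(H)^\ell=(T^k)^\ell=T^{k\ell}=T^m$ and the socle point-stabiliser is $N_\alpha=D^\ell\cong T^\ell$, matching the hypotheses $m=k\ell$ and $N_\alpha=T^\ell$. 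Since the statement is an ``only if'', I only need to derive the two arithmetic conditions from the existence of a {\qsre}.

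Next I would push a {\qsre} of $G$ down to the building block $H$. Since $G\leqslant H\wr\Sym(\ell)\leqslant \Sym(\Delta)\wr\Sym(\ell)$ and $|\Delta|=|T|^{k-1}\geqslant 3$ with $\ell\geqslant 2$, the argument of Corollary~\ref{cor: pa groups} applies verbatim: its proof only invokes Theorem~\ref{t:pa}, which concerns the wreath product $\Sym(\Delta)\wr\Sym(\ell)$ and is insensitive to whether the building block is of type AS or SD. Thus, assuming $G$ contains a {\qsre}, it contains one of prime order, and any such element lies in the base group $H^\ell$ with each of its $\ell$ components a {\qsre} of $H$ acting on $\Delta$. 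In particular $H$ contains a {\qsre}.

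Finally I would lift the {\qsre} of $H$ to the maximal SD overgroup and conclude. Because $T^k\leqslant H\leqslant W=T^k.(\Out(T)\times\Sym(k))$ with $H$ and $W$ acting on the same set $\Delta$, a {\qsre} of $H$ on $\Delta$ is also a {\qsre} of $W$ on $\Delta$ (being {\qsr} is a property of the permutation induced on $\Delta$). Hence $W$ contains a {\qsre}, and Theorem~\ref{thm: sd groups} forces $k$ to be prime and $|T|$ to be coprime to $k$, as required. There is no serious obstacle here; the only point needing care is the first reduction, namely verifying that Corollary~\ref{cor: pa groups}, stated there for type PA, transfers to type CD. As indicated, this is immediate, since the corollary's proof depends only on the wreath-product combinatorics of Theorem~\ref{t:pa} and not on the O'Nan--Scott type of the building block $H$.
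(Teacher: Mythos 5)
Your proposal is correct and follows essentially the same route as the paper: reduce to the base group via Corollary~\ref{cor: pa groups} (whose proof indeed depends only on Theorem~\ref{t:pa} and not on the O'Nan--Scott type of the building block), then invoke Theorem~\ref{thm: sd groups}. You are somewhat more explicit than the paper about two small points --- why Corollary~\ref{cor: pa groups} transfers from PA to CD, and why a {\qsre} of $H$ yields one of the maximal SD group $W$ --- but these are exactly the implicit steps in the paper's argument.
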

\begin{proof}
Let $G$ be as in the statement and suppose that $x\in G$ is a {\qsre}. We may assume that $|x|$ is prime. By definition we have $G \leqslant H \wr \Sym(\ell)$, where $H$ is a primitive group of SD type, so that $H \leqslant \Aut(T) \wr \Sym(k)$. By Corollary~\ref{cor: pa groups},  $x\in H^{\ell}$ and $H$ contains a {\qsre}. By Theorem~\ref{thm: sd groups},  $k=m/\ell$ is prime and $|T|$ is coprime to $k$. 
\end{proof}

\begin{rem}
\label{rem:CDeg}
 Note that a primitive group of type CD satisfying the conditions in Corollary \ref{cor:CD} need not contain a \qsre. For example, take $H$ to be a group provided by Remark \ref{rem:noSDeg} with no \qsres, then Corollary \ref{cor: pa groups} implies that $H\wr \Sym(\ell)$ does not contain a \qsre.

It is also possible to construct primitive groups of type CD that do not contain \qsres\  from groups of type SD that do contain a \qsre. For example, let $T$ be a simple group of order coprime to 5 (such as $\PSL_3(2)$), $k=5$ and $\ell=3$. Let $H=T^5\rtimes \langle \sigma\rangle$, where $\sigma=(1,2,3,4,5)$, be a primitive group of type SD acting on $\Delta$. Note that $\sigma$ is \qsr. Now let $G= \langle (T^5)^3, (\sigma,\sigma,1), (1,\sigma,\sigma)\rangle \rtimes \Sym(3)\leqslant H\wr \Sym(3)$ act on $\Delta^3$.
 By Corollary \ref{cor: pa groups}, any \qsre\ in $G$ lies in the base group $$B:=G\cap H^3=\langle (T^5)^3, (\sigma,\sigma,1),(1,\sigma,\sigma)\rangle$$ and is a product of \qsres. However, by Theorem \ref{thm: sd groups}, a \qsre\ in $\langle T^5,\sigma\rangle$ is conjugate to power of $\sigma$, and so a \qsre\ in $B$ must induce a semiregular permutation on the 15 simple direct factors of $\soc(G)$. However, $B$ contains no such element.
\end{rem}

\begin{thm}
\label{thm: tw type}
Suppose that $G$ is a primitive group of TW type. Then $G$ contains no {{\qsres}}.
\end{thm}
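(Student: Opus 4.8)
The plan is to exploit the defining feature of TW type: the socle $N=\soc(G)=T^k$ (with $T$ a nonabelian simple group and $k\geqslant 2$) acts \emph{regularly} on $\Omega$, so the point-stabiliser $H:=G_\alpha$ is a complement to $N$ and $G=N\rtimes H$. As usual we may reduce to a {\qsre} $x$ of prime order $p$; if $\alpha$ denotes its unique fixed point then $x\in G_\alpha=H$. I would first record the centraliser identity for this semidirect product: for $g=nh$ with $n\in N$ and $h\in H$, comparing the unique $N$-$H$ factorisations of $g$ and $x^{-1}gx=(x^{-1}nx)(x^{-1}hx)$ shows that $g\in C_G(x)$ forces $n\in C_N(x)$ and $h\in C_H(x)$, whence $C_G(x)=C_N(x)\,C_H(x)$. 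By the centraliser criterion of Corollary~\ref{cor: qsr iff cent condition}, quasi-semiregularity of $x$ requires $C_G(x)=C_H(x)$, which is therefore equivalent to $C_N(x)=1$; that is, $x$ must act fixed-point-freely on $N$ by conjugation.

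The heart of the proof is then to show that no element of prime order in $H$ can act fixed-point-freely on $N=T_1\times\cdots\times T_k$, giving the desired contradiction. The conjugation action of $x$ permutes the simple direct factors through its image $\pi$ in $\Sym(k)$ (of order dividing $p$) and twists them by isomorphisms, and I would produce a nonidentity fixed element supported on a single $\pi$-orbit of factors. If $\pi$ fixes a factor $T_i$, then $x$ induces an automorphism of $T_i$ of order $1$ or $p$; such an automorphism fixes a nontrivial element of $T_i$ (trivially when it is the identity, and otherwise by Thompson's theorem, exactly as invoked in the proof of Theorem~\ref{thm: sd groups}), and placing this element in the $i$th coordinate and the identity elsewhere gives a nonidentity element of $C_N(x)$. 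If instead $(i_1,\dots,i_p)$ is a $p$-cycle of $\pi$, relabel it as $(1,\dots,p)$ and solve the fixed-point equations $t_j=\theta_j(t_{j-1})$ on $T_1\times\cdots\times T_p$, where the $\theta_j$ are the twisting isomorphisms; these express $t_2,\dots,t_p$ in terms of an arbitrary $t_1\in T_1$, and the only compatibility condition on $t_1$ is that it be fixed by the composite $\theta_1\theta_p\cdots\theta_2$, which is the action of $x^p=1$ on $T_1$ and hence the identity. Thus the fixed subgroup on such a cycle is a full diagonal copy of $T$, again nontrivial, so in all cases $C_N(x)\neq 1$.

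Combining these, every element of prime order in $H$ satisfies $C_N(x)\neq1$, contradicting the requirement $C_N(x)=1$; hence $G$ contains no {\qsre}. I expect the $p$-cycle computation to be the only genuine step, and it is the conceptual crux: it shows that regularity of a \emph{nonabelian} socle is fatal to quasi-semiregularity, in sharp contrast to the affine case, where a scalar acts on the (abelian) socle with no nonzero fixed vector. The fixed-factor case is immediate from Thompson's theorem, and the reductions to prime order and to $x\in H$ are routine. Finally, I note that this argument applies verbatim to any primitive group with regular nonabelian socle, which among the O'Nan--Scott types is precisely the TW case.
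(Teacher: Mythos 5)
Your proposal is correct and is essentially the paper's own argument: both split on whether $x$ normalises a simple direct factor (using Thompson's theorem to find a nontrivial fixed element) or permutes $p$ factors in a cycle (using a diagonal/product element $f f^x\cdots f^{x^{p-1}}$ fixed by $x$), thereby producing a nonidentity element of $N$ centralised by $x$. Your detour through $C_G(x)=C_N(x)C_H(x)$ and Corollary~\ref{cor: qsr iff cent condition} is only a cosmetic repackaging, since regularity of the socle identifies $\Omega$ with $N$ and a second fixed point with a nontrivial element of $C_N(x)$, which is how the paper phrases it directly.
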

\begin{proof}
Let $T$ be the non-abelian simple group such that $\soc(G)=T^k$ and recall that the set acted upon is $T^k$ which is viewed as functions $f : \{1,\ldots,k\} \rightarrow T$. For each $i=1,\ldots, k$ we set 
$$T_i = \{ f \in T^k : f(j) = 1\text{ for all }j\neq i \}$$
so that $\soc(G)=T_1 \ldots T_k$. Let $H$ denote the stabiliser of the identity, so that $G=\soc(G) \rtimes H$ and let $x\in H$ have prime order $p$. Let $L$ denote the ``twisting subgroup'' of $H$, so that there is a homomorphism $\varphi : L \rightarrow \Aut(T)$, $k=|H:L|$ and $L=N_H(T_1)$.  If $x\in L$, then $\varphi(x)$ has order $1$ or $p$. By Thompson's Theorem  \cite[Theorem 1]{Thompson}, there is some $t\in T \setminus\{1\}$ such that $t^{\varphi(x)} = t$. Then the element $f\in T_1$ defined by $f(1) = t$ and $f(i)=1$ for $i>1$ satisfies $f^x = f$ and clearly $f\neq 1$. It follows that, if $x$ normalises any of the subgroups $T_i$, then $x$ is not a {\qsre}. Now suppose that $x$ induces a permutation of order $p$ on the set $\{T_1,\ldots,T_k\}$ with no fixed points. In particular $x\not\in L$, and without loss of generality, we assume that $x$ induces the $p$-cycle $T_1 \mapsto T_2 \mapsto  \cdots \mapsto T_p$ (and possibly some other $p$-cycles also). Pick $1 \neq f \in T_1$ 
and set $\hat{f} = f f^x \ldots f^{x^{p-1}} \in T_1\ldots T_p$. Since $f^{x^i} \in T_{i+1}$  (reading subscripts modulo $p$) and for $i\neq j$ we have $[T_i,T_j]=1$, we have  
$$
(\hat{f})^x = f^xf^{x^2} \ldots f^{x^{p-1}}f^{x^p}= f^x f^{x^2} \ldots f^{x^{p-1}} f = f f^x \ldots f^{x^{p-1}} = \hat{f}
$$
and since $f\neq 1$, the function $\hat{f}\neq 1$. Thus again $x$ is not a {\qsre}. It follows that $H$, and hence also $G$, contains no {\qsres}.
\end{proof}

\begin{thm}
\label{thm: hs groups}
Let $G$ be a primitive group of HS or HC type. Then $G$ contains no {{\qsres}}.
\end{thm}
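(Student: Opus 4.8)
The plan is to exploit the defining feature of HS and HC type groups: the socle $\soc(G) = M_1 \times M_2$ is a product of exactly two minimal normal subgroups $M_1, M_2$, each of which is normal in $G$, acts regularly on $\Omega$, and is isomorphic to $T$ (in the HS case) or to $T^k$ with $k \geq 2$ (in the HC case), for some nonabelian simple group $T$. Since every nonidentity power of a {\qsre} is again {\qsr}, it suffices to rule out {\qsres} of prime order. So suppose for contradiction that $x \in G$ is a {\qsre} of prime order $p$ with unique fixed point $\alpha$, and set $H = G_\alpha$, so that $x \in H$. The goal is to show that $x$ must in fact fix more than one point, by reinterpreting the fixed points of $x$ as a centraliser in $M_1$.

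First I would use the regularity of $M_1$ to identify $\Omega$ with $M_1$ via $m \mapsto \alpha^m$, sending $\alpha$ to the identity. For $x \in H$ one checks that $\alpha^m$ is fixed by $x$ if and only if $mxm^{-1} \in H$. Since $M_1 \trianglelefteq G$, the element $mxm^{-1}$ lies in the coset $M_1 x$, and because $M_1$ is regular we have $H \cap M_1 = (M_1)_\alpha = 1$; hence $H \cap M_1 x = \{x\}$, and $mxm^{-1} \in H$ forces $mxm^{-1} = x$. Thus the fixed points of $x$ on $\Omega$ correspond precisely to $\cent{M_1}{x}$, and the {\qsre} hypothesis becomes the requirement that $\cent{M_1}{x} = 1$, that is, $x$ acts fixed-point-freely by conjugation on $M_1$.

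It then remains to show that no element of $H$ of prime order can centralise $M_1$ trivially. Conjugation by $x$ induces an automorphism of $M_1 \cong T^k$ (with $k \geq 1$) of order dividing $p$. If this automorphism is trivial then $\cent{M_1}{x} = M_1 \neq 1$; otherwise it has order $p$, and I would run exactly the argument already used in the proof of Theorem~\ref{thm: tw type}. Namely, the induced automorphism permutes the simple direct factors $T_1, \ldots, T_k$ of $M_1$ via a permutation $\sigma$ of order dividing $p$: if $\sigma$ fixes some factor, then Thompson's theorem \cite[Theorem 1]{Thompson} yields a nontrivial fixed element within that factor, while if $\sigma$ consists only of $p$-cycles, then taking any $1 \neq t$ in a factor of a $p$-cycle and forming $\hat t = t\, t^x \cdots t^{x^{p-1}}$ as in that proof produces a nontrivial fixed element. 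In all cases $\cent{M_1}{x} \neq 1$, so $x$ fixes at least two points of $\Omega$, contradicting that $x$ is {\qsr}.

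The routine parts are the automorphism analysis, which is essentially quoted from the TW proof. The step requiring the most care is the structural translation in the second paragraph: one must invoke the correct defining property of HS and HC groups, namely that the socle splits as a product of two \emph{regular} minimal normal subgroups, and then verify the identity $H \cap M_1 x = \{x\}$, which is exactly what converts the fixed-point count into $\cent{M_1}{x}$. I expect this to be the main conceptual point, after which the contradiction is immediate.
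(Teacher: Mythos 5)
Your proof is correct, and for the HS case it is essentially the argument the paper gives: there the authors identify $\Omega$ with $T$, observe $\Inn(T)\leqslant G_\alpha\leqslant\Aut(T)$, and invoke Thompson's theorem to produce a second fixed point; your coset computation $H\cap M_1x=\{x\}$ just makes explicit the identification (fixed points of $x$) $\leftrightarrow$ $\cent{M_1}{x}$ that the paper leaves implicit. Where you genuinely diverge is the HC case. The paper disposes of it in one line by viewing an HC group as a blow-up of an HS group and citing Corollary~\ref{cor: pa groups} (no {\qsre} in the component forces none in the product action), whereas you run the centraliser argument directly on the regular minimal normal subgroup $M_1\cong T^k$, splitting into the case where the induced permutation of the $k$ simple factors fixes a factor (Thompson) and the case where it is a product of $p$-cycles (the $\hat t=t\,t^x\cdots t^{x^{p-1}}$ trick recycled from the proof of Theorem~\ref{thm: tw type}). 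Both routes are valid. The paper's is shorter given that the product-action machinery is already in place; yours is self-contained modulo Thompson's theorem and has the virtue of isolating the single mechanism underlying Theorems~\ref{thm: tw type} and~\ref{thm: hs groups} alike, namely that a nonabelian regular normal subgroup always supplies a nontrivial element centralised by any prime-order point stabiliser element --- which is exactly the content of Corollary~\ref{cor:affine}. All the structural facts you rely on (HS/HC groups have two regular nonabelian minimal normal subgroups; $H\cap M_1=1$ by regularity; reduction to prime order) are correct, so I see no gap.
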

\begin{proof}
First suppose that $G$ is a group of HS type. Note that the set acted upon is $T$, where $\soc(G)=T^2$ for some non-abelian simple group $T$. Let $G_1$ denote a point-stabiliser in $G$. Then $\Inn(T) \leqslant G_1 \leqslant \Aut(T)$. In particular, by Thompson's Theorem \cite[Theorem 1]{Thompson}, each element of $G_1$ of prime order fixes some non-trivial element of $T$, and hence fixes at least two points. Thus $G$ contains no {{\qsre}}.

Now suppose that $G$ is a group of HC type. Then $G$ is constructed from the product action of a HS type group $H$. Since $H$ has no {\qsre} by the previous paragraph, Corollary~\ref{cor: pa groups} implies that $G$ has no {\qsre}.
\end{proof}

Since primitive groups with regular normal subgroups have type HA, HS, HC or TW, 
from Theorems~\ref{thm: tw type} and~\ref{thm: hs groups} we obtain the following corollary.

\begin{cor}
    \label{cor:affine}
    Suppose that $G$ is a primitive group with a regular normal subgroup. If $G$ contains a {\qsre}, then $G$ must be of affine type.
\end{cor}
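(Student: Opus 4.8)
The plan is to reduce to a short list of O'Nan--Scott types via the structure theory and then eliminate all non-affine possibilities using the results already established in this section. First I would recall the standard fact (see \cite[Section 3]{PraegerPrimInc}) that, among the eight O'Nan--Scott types, the primitive groups possessing a regular normal subgroup are precisely those of types HA, HS, HC and TW; the socles of the types AS, SD, CD and PA act nonregularly, so those types contribute no candidate groups. This identification of the candidate types is the only place where I appeal to the classification directly, so it carries essentially all of the conceptual content.

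Having restricted to the types HA, HS, HC and TW, the rest is immediate from the theorems just proved. By Theorem~\ref{thm: hs groups}, a primitive group of type HS or HC contains no {\qsres}, and by Theorem~\ref{thm: tw type}, a primitive group of type TW contains no {\qsres}. Hence, under the hypothesis that $G$ has a regular normal subgroup and nonetheless contains a {\qsre}, the three types HS, HC and TW are all excluded. The only remaining possibility is type HA, which is exactly the affine type, and this is the desired conclusion.

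Since every step after the initial O'Nan--Scott reduction is a direct appeal to Theorems~\ref{thm: tw type} and~\ref{thm: hs groups}, I do not anticipate any genuine obstacle in this argument. The one point meriting care is the bookkeeping of which types admit a regular normal subgroup; if one wished to be fully self-contained rather than citing \cite{PraegerPrimInc}, one could verify directly from the definition of each type that only HA, HS, HC and TW have a (respectively abelian or nonabelian) regular normal subgroup, but this is routine and standard.
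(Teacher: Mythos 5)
Your argument is correct and is precisely the paper's own: the paper notes that primitive groups with a regular normal subgroup have type HA, HS, HC or TW, and then cites Theorems~\ref{thm: tw type} and~\ref{thm: hs groups} to exclude all but HA. No differences worth noting.
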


We also draw one further consequence, for {\qsres} of order two. It follows immediately from the results of this section, and the fact that a finite non-abelian simple group has even order.

\begin{cor}\label{cor:qsr invols}
    Suppose that $G$ is a primitive group containing a {\qsre} of order two. Then $G$ is of one of the  following types: HA, AS, PA.
\end{cor}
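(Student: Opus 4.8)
The plan is to begin from the classification of O'Nan--Scott types that can admit a \qsre, and then to eliminate the two surviving types SD and CD using the observation that a \qsre\ of order two forces a structural parameter to equal $2$, which collides with the even order of any finite non-abelian simple group.

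First I would invoke the results already established in this section. Theorem~\ref{thm: tw type} rules out type TW, and Theorem~\ref{thm: hs groups} rules out types HS and HC. Since the eight O'Nan--Scott types are HA, AS, HS, HC, SD, CD, PA and TW, any primitive $G$ containing a \qsre\ must therefore be of type HA, AS, PA, SD or CD. It thus remains only to show that types SD and CD cannot occur when the \qsre\ has order two.

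For the SD case, suppose $G$ is of type SD with socle $T^k$, and let $W = T^k.(\Out(T)\times\Sym(k))$ be the maximal SD group containing $G$ and acting on the same set $\Omega$. A \qsre\ of $G$ is a permutation of $\Omega$ that remains a \qsre\ when regarded as an element of the overgroup $W$. By the ``moreover'' part of Theorem~\ref{thm: sd groups}, every \qsre\ of $W$ is conjugate to an element of $\Sym(k)$ of order $k$; in particular a \qsre\ of order two forces $k=2$, and then Theorem~\ref{thm: sd groups} requires $|T|$ to be coprime to $k=2$, that is, $|T|$ odd. This contradicts the fact that a finite non-abelian simple group has even order, so SD does not arise. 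For the CD case, suppose $G$ is of type CD with socle $T^m$ and $N_\alpha = T^\ell$, where $m=k\ell$. By Corollary~\ref{cor: pa groups}, any \qsre\ of prime order in $G$ lies in the base group $H^\ell$, where $H$ is the relevant SD type factor, and each of its nonidentity components is a \qsre\ of $H$. A \qsre\ of order two therefore has nonidentity components that are \qsres\ of $H$ of order two; but applying Theorem~\ref{thm: sd groups} to $H$, exactly as in the SD case, such an element forces $k=2$ and hence $|T|$ odd, again a contradiction. Thus CD does not arise either.

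The argument is essentially bookkeeping once Theorems~\ref{thm: tw type}, \ref{thm: hs groups} and \ref{thm: sd groups} are in hand. The only point requiring a little care is the passage from an arbitrary SD or CD group to the maximal SD group (respectively to its SD factor $H$), so that the structural description of \qsres\ in Theorem~\ref{thm: sd groups} can legitimately be applied; I do not expect any genuine obstacle beyond verifying that a \qsre\ of the smaller group is still a \qsre\ of the ambient group acting on the same set.
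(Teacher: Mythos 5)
Your proof is correct and follows essentially the same route as the paper, which simply notes that the corollary is immediate from the results of that section together with the fact that a finite non-abelian simple group has even order. Your elaboration — ruling out TW, HS, HC via Theorems~\ref{thm: tw type} and~\ref{thm: hs groups}, and killing SD and CD because an order-two \qsre\ forces $k=2$ and $|T|$ odd via Theorem~\ref{thm: sd groups} and Corollary~\ref{cor: pa groups} — is exactly the intended argument, and your observation that quasi-semiregularity is a property of the permutation itself (so it passes to the maximal overgroup $W$) correctly handles the only delicate point.
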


\section{The symmetric and alternating groups}

In this section, let $G$ be a primitive permutation group such that $\soc(G)=\Alt(n)$. Denote by $H$ a point-stabiliser in $G$ so that $\Omega = [G:H]$.  Set $[n]=\{1,\ldots,n\}$. 

We determine the cases where $G$ contains {\qsres}. We start with the exceptional cases related to $n=6$.

\begin{prop}
Suppose that $G$ has socle $\Alt(6)$ and $G\neq \Alt(6)$ or $\Sym(6)$. Let $H$ be a maximal subgroup of $G$. Then $G$ has a {\qsre} of prime order $p$ in the action on $[G:H]$ if and only if $|G:H|$, $H$ and $p$ appear in Table~\ref{tab:auta6}. In particular, any {\qsre} in $G$ lies in $\soc(G)$.
\end{prop}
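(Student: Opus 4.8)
The plan is to treat, one at a time, each of the three almost simple groups with socle $\Alt(6)$ excluded by the hypothesis, namely $G\in\{\PGL(2,9),\,M_{10},\,\mathrm{P}\Gamma\mathrm{L}(2,9)\}$ (these being the subgroups strictly between $\Alt(6)$ and $\Aut(\Alt(6))=\mathrm{P}\Gamma\mathrm{L}(2,9)$ other than $\Sym(6)$). For each $G$ I would run through the conjugacy classes of core-free maximal subgroups $H$, so that $G$ acts faithfully and primitively on $[G:H]$; a maximal $H$ containing $\soc(G)$ gives only the non-faithful action of degree $2$ or $4$, in which no element has a unique fixed point, so these are discarded. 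The candidate primes are then cut down by Lemma~\ref{lem: qs and sylow}(1): a {\qsre} of order $p$ forces $H$ to contain a full Sylow $p$-subgroup of $G$, equivalently $|G:H|\equiv 1\pmod p$. Since $|G|$ divides $1440=2^5\cdot 3^2\cdot 5$, only $p\in\{2,3,5\}$ occur, and this congruence leaves a short finite list of triples $(G,H,p)$ to examine.

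For each surviving triple I would apply the prime-order criterion of Corollary~\ref{cor: normalisers equals implies qsr}. Fixing a representative $x$ of each $H$-class of elements of order $p$ and setting $K=\langle x\rangle$, the element $x$ is {\qsr} on $[G:H]$ precisely when $N_G(K)=N_H(K)$ and $K^G\cap H=K^H$. The first condition is a routine normaliser computation, while the second is a statement about fusion, asserting that distinct $H$-classes of subgroups of order $p$ remain distinct in $G$ and that no such subgroup is carried into $H$ by an outer element. Recording exactly the triples passing both tests produces the entries of Table~\ref{tab:auta6}, and this establishes the stated equivalence. As every step is an explicit finite calculation in a group of order at most $1440$, the whole enumeration can be carried out and cross-checked in {\sc Magma}, as elsewhere in this paper.

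The delicate point, which I expect to be the main obstacle, is the fusion condition $K^G\cap H=K^H$. For $p=5$ the group $\Alt(6)$ has two classes of subgroups of order $5$, interchanged by some but not all of its outer automorphisms, so whether they fuse in a given $G$ is exactly what decides the $p=5$ entries; this is the phenomenon underlying the degree-$36$ row and the hypothesis in Theorem~\ref{intro thm:alt or sym}(3) that $G$ contain an outer automorphism of $\Sym(6)$. The analogous subtlety for $p=2$ arises from the exceptional outer automorphism of $\Sym(6)$, which makes the involution fusion in these groups atypical. I would resolve both by direct computation, guided by the known class-fusion data for $\Alt(6)$ and its automorphism group.

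Finally, for the \emph{moreover} assertion I would argue that no {\qsre} lies in $G\setminus\soc(G)$. In each of the three cases $G/\soc(G)$ is a $2$-group, so every element of odd order lies in $\soc(G)=\Alt(6)$; in particular, as $|G|_r=|\Alt(6)|_r$ for $r\in\{3,5\}$, all elements of order $3$ or $5$ are inner, and hence the {\qsres} recorded in Table~\ref{tab:auta6} already lie in $\soc(G)$. Suppose instead that $g$ is a {\qsre} with $g\notin\soc(G)$, and write $|g|=2^a m$ with $m$ odd. Every nonidentity power of a {\qsre} is again {\qsr} with the same unique fixed point, so $g^{m}$ is {\qsr}; moreover $\overline{g^{m}}=\overline{g}^{\,m}$ generates the same nontrivial subgroup of the $2$-group $G/\soc(G)$ as $\overline g$, so $g^{m}$ is an outer {\qsr} $2$-element. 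It therefore suffices to show that no $2$-element of $G\setminus\soc(G)$ is {\qsr}, and for each of the finitely many classes of such elements this can be checked by computing, via Lemma~\ref{lem: x is qsres}, the fixed-point numbers of the element and of its nontrivial powers and observing that at least one of them exceeds $1$. This rules out outer {\qsres} and completes the proof.
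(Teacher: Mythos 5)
Your proposal is correct and takes essentially the same route as the paper: the paper's entire proof is the single line ``This result is verified by a {\sc Magma} calculation'', and your outline is simply an explicit description of that finite computation (enumerating $G\in\{\PGL(2,9),M_{10},\mathrm{P}\Gamma\mathrm{L}(2,9)\}$ and testing each class via Lemma~\ref{lem: qs and sylow} and Corollary~\ref{cor: normalisers equals implies qsr}), together with a clean reduction of the ``moreover'' clause to outer $2$-elements. One tiny imprecision: containing a Sylow $p$-subgroup is equivalent to $p\nmid|G:H|$, whereas $|G:H|\equiv 1\pmod p$ is the stronger necessary condition coming from the cycle type of a {\qsre} — both are valid filters here, so nothing is affected.
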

\begin{proof}
This result is verified by a {\sc Magma} calculation.
\end{proof}

\begin{table}
    \centering
    \begin{tabular}{c|c|c}
         $|G:H|$ & $H$  & $p$   \\ \hline
          $36$ & $(\Sym(2)\times F_{20})\cap G$ & $5$ \\
          $10$ & $((\Sym(3)\wr\Sym(2)).2) \cap G$ & $3$ 
    \end{tabular}
    \caption{Actions of almost simple groups with socle $\Alt(6)$ that admit {\qsres} of order $p$.}
    \label{tab:auta6}
\end{table}

From now on, we may assume that $G=\Alt(n)$ or $G=\Sym(n)$.

\begin{prop}
\label{prop:k-set stab}
    Suppose that $\Omega$ is the set of $k$-subsets of $[n]$ for some integer $k$ such that $1\leqslant k < n/2$. Then $\Sym(n)$ contains a {\qsre} of prime order $p$ if and only if $k<p$ and $p$ divides $n-k$. Moreover, such a {\qsre} of order $p$ has cycle type $1^kp^{(n-k)/p}$.
\end{prop}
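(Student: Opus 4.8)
The plan is to reduce the whole statement to a single combinatorial count. The first step is to note that any $x\in\Sym(n)$ of prime order $p$ acts on $\Omega$ (the set of $k$-subsets) with every orbit of length $1$ or $p$; hence $x$ is \qsr\ precisely when it fixes \emph{exactly one} $k$-subset, since then each of its remaining orbits automatically has length $p=|x|$, which is the semiregularity condition. So I may forget about the word ``semiregular'' entirely and simply count fixed $k$-subsets.

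For the count I would parametrise $x$ by its cycle type on $[n]$. As $x$ has prime order $p$, its cycle type is $1^{f}p^{c}$ with $f+cp=n$, where $f\geqslant 0$ is the number of fixed points and $c\geqslant 0$ the number of $p$-cycles. A $k$-subset is fixed by $x$ if and only if it is a union of $x$-orbits, namely a choice of $a$ fixed points and $b$ of the $p$-cycles with $a+bp=k$. Thus the number of fixed $k$-subsets is
\[
N(x)=\sum_{\substack{a+bp=k\\ 0\leqslant a\leqslant f,\ 0\leqslant b\leqslant c}}\binom{f}{a}\binom{c}{b}.
\]
Every summand is a positive integer, so $N(x)=1$ forces there to be exactly one admissible pair $(a,b)$ and for that pair to satisfy $\binom{f}{a}\binom{c}{b}=1$, i.e.\ $a\in\{0,f\}$ and $b\in\{0,c\}$.

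The core of the argument is then a short case analysis over the four candidate pairs $(0,0)$, $(f,0)$, $(0,c)$, $(f,c)$, using the hypothesis $1\leqslant k<n/2$ to eliminate all but one. The pairs $(0,0)$ and $(f,c)$ give $k=0$ and $k=n$, both excluded. For $(0,c)$ we get $k=cp$, and I would show $k<n/2$ forces $k<f$, so that the distinct pair $(k,0)$ is also admissible, contradicting uniqueness. This leaves $(f,0)$, which gives $f=k$; here $k<n/2$ gives $c\geqslant 1$, and if $k\geqslant p$ then $(k-p,1)$ would be a second admissible pair, a contradiction, so $k<p$. Finally $f=k$ together with $f+cp=n$ yields $p\mid n-k$, $c=(n-k)/p$, and the cycle type $1^{k}p^{(n-k)/p}$. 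The converse is immediate: when $k<p$ and $p\mid n-k$ this cycle type has $f=k<p$, whence the only admissible pair is $(k,0)$ and $N(x)=1$, so $x$ is \qsr\ of the asserted cycle type, and such an element exists in $\Sym(n)$ since $p\mid n-k$ and $n>k$ force $n-k\geqslant p$.

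The main obstacle is not any single estimate but arranging the case analysis so that $k<n/2$ is deployed correctly to kill the ``dual'' possibility $(0,c)$ (the fixed $k$-subset being a union of $p$-cycles). This is exactly where the restriction $k<n/2$ is essential: without it one would admit spurious \qsres\ of complementary cycle type $1^{n-k}p^{k/p}$ with $k\geqslant p$, corresponding to the $k\leftrightarrow n-k$ symmetry of $k$-subsets.
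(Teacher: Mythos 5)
Your argument is correct. The underlying observation is the same as the paper's -- a $k$-subset is fixed by $x$ if and only if it is a union of $\langle x\rangle$-orbits, and primality of $|x|$ reduces quasi-semiregularity to having exactly one fixed $k$-subset -- but you exploit uniqueness differently. The paper fixes the distinguished $k$-subset $\Delta$ and runs an exchange argument: whenever there are $\langle g\rangle$-invariant subsets $A\subseteq\Delta$, $B\subseteq[n]\setminus\Delta$ of equal size, the swap $(\Delta\setminus A)\cup B$ is a second fixed $k$-subset. From this it deduces in turn that $|\Delta|<p$, that $g$ fixes $\Delta$ pointwise, and that the complement carries no fixed points. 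You instead write down the global count $N(x)=\sum_{a+bp=k}\binom{f}{a}\binom{c}{b}$ and observe that $N(x)=1$ forces a unique admissible pair $(a,b)$ with both binomials trivial, then eliminate three of the four candidates $(0,0),(f,0),(0,c),(f,c)$. Your case $(0,c)$ is exactly where the paper's hypothesis $|\Delta|<|\Delta'|$ does its work, and your handling of it is sound: $k=cp$ and $k<n/2$ give $k<f=n-k$, so $(k,0)$ is a second admissible pair. The two proofs are of comparable length; yours has the small advantage of making the role of $k<n/2$ completely explicit (it kills precisely the complementary cycle type $1^{n-k}p^{k/p}$), and it packages both directions of the equivalence into the single identity $N(x)=1$, whereas the paper's swap construction is perhaps more readily adaptable to the partition-stabiliser case treated next. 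Both are complete; no gaps.
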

\begin{proof}
    Suppose $H$ fixes $\Delta$ with $|\Delta|=k$ and let $\Delta' = [n] \setminus \Delta$, and note that $|\Delta|<|\Delta'|$. Let $g\in H$ have order $p$ and suppose that $g$ is {\qsr}. Then $g$ leaves $\Delta$ and $\Delta'$ invariant. We claim there does not exist $A \subseteq \Delta$ and $B \subseteq   \Delta'$ such that $|A|=|B|$ and  $A^g =A$ and $B^g=B$. Indeed, for such subsets, $(\Delta \setminus A) \cup B$ is fixed by $g$, has size $k$, and is distinct from $\Delta$, so $g$ would not be {\qsr}, a contradiction.
    This implies in particular that $|\Delta|<p$, since otherwise we could choose $\langle g\rangle$-invariant subsets $A$, $B$ as above (each consisting of either $p$ fixed points of $g$, or a single $g$-cycle).  Thus $g$ fixes $\Delta$ pointwise. If $g$ fixed a point $x\in\Delta'$, then we could take $B=\{x\}$ and $A$ to be any singleton subset of $\Delta$, again yielding a contradiction. Thus $\Delta'$ is a union of $\langle g\rangle$-orbits of length $p$, so $p$ divides $n-k$ and  $g$ has cycle type $1^kp^{(n-k)/p}$.

    Conversely, suppose that there is a prime $p$ dividing $n-k$ and $p>k$. Write $n-k = pm$ and let $g\in \Sym(n)$ be of cycle type $1^kp^m$. A subset of $[n]$ is fixed by $g$ if and only if it is a union of the supports of the cycles of $g$.  Since $k< p$, for a $k$-set $\Delta$ to be fixed by $g$ it must consist of fixed points of $g$; and since $g$ has exactly $k$ fixed points, $\Delta = \mathrm{fix}(g)$. Thus $g$ fixes a unique $k$-subset, and so $g$ is {\qsr}.
\end{proof}

Since every element of odd prime order in $\Sym(n)$ lies in $\Alt(n)$, the above proposition gives necessary and sufficient conditions for $\Alt(n)$ to have a {\qsre} of odd prime order in its action on $k$-subsets of $[n]$. On the other hand, for involutions, we  have the following.

\begin{cor}
        Suppose that $\Omega$ is the set of $k$-subsets of $[n]$ for some integer $1\leqslant k < n/2$. Then $\Alt(n)$ acting on $\Omega$ contains a {\qsre} of   order $2$ if and only if $k=1$ and $n\equiv 1 \pmod{4}$. 
\end{cor}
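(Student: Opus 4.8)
The plan is to derive the corollary almost immediately from Proposition~\ref{prop:k-set stab}, with the only new ingredient being a parity computation that decides membership in $\Alt(n)$. First I would note that the {\qsr} property depends only on the action on $\Omega$, so any {\qsre} of order $2$ in $\Alt(n)$ is in particular a {\qsre} of order $2$ for $\Sym(n)$ acting on the same set of $k$-subsets. Applying the forward direction of Proposition~\ref{prop:k-set stab} with $p=2$ then forces $k<2$, hence $k=1$, and $2\mid n-k=n-1$, hence $n$ is odd; moreover it pins down the cycle type of any such element uniquely to $1^1 2^{(n-1)/2}$.

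The key step is the parity refinement. An involution of cycle type $1^1 2^{(n-1)/2}$ is a product of $(n-1)/2$ transpositions, so it lies in $\Alt(n)$ if and only if $(n-1)/2$ is even, which is equivalent to $n\equiv 1 \pmod 4$. This rules out the case $n\equiv 3 \pmod 4$ and completes the forward direction: the cycle type is already determined by the proposition, and membership in $\Alt(n)$ is exactly the additional congruence condition.

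For the converse, assuming $k=1$ and $n\equiv 1\pmod 4$, I would take any element $g$ of cycle type $1^1 2^{(n-1)/2}$. Since $(n-1)/2$ is even, $g\in\Alt(n)$, and Proposition~\ref{prop:k-set stab} guarantees that $g$ is {\qsr} on $1$-subsets, equivalently in the natural action of $\Alt(n)$ on $[n]$. There is no real obstacle here: the proposition carries out all the combinatorial work, and the only genuine subtlety is the straightforward but essential observation that restricting from $\Sym(n)$ to $\Alt(n)$ imposes precisely the parity condition $n\equiv 1\pmod 4$ on the (uniquely determined) cycle type $1^1 2^{(n-1)/2}$.
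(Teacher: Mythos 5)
Your proof is correct and is exactly the argument the paper intends: the corollary is stated immediately after Proposition~\ref{prop:k-set stab} with no separate proof, precisely because the forward direction of that proposition with $p=2$ forces $k=1$ and the cycle type $1^1 2^{(n-1)/2}$, after which membership in $\Alt(n)$ reduces to the parity of $(n-1)/2$, i.e.\ to $n\equiv 1\pmod 4$. Your observation that quasi-semiregularity is a property of the induced permutation on $\Omega$ (so the proposition for $\Sym(n)$ applies directly to elements of $\Alt(n)$) is the right justification for the reduction, and both directions are handled correctly.
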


\begin{prop}
\label{prop:k-part stab}
    Suppose that $\Omega$ is the set of partitions of $[n]$ into   parts of size $k$, for some divisor $k$ of $n$ with $1 < k \leqslant n/2$ and $n > 4$. Then $G\in\{\Sym(n), \Alt(n)\}$ contains a {\qsre} of prime order $p$ if and only if  $p$ is odd, and $n=km$ with  $k=p$ and  $2\leqslant m\leqslant p$. Moreover, if $g$ is a {\qsre}  of prime order $p$ and $\Gamma = \{ \Gamma_1,\ldots, \Gamma_m \}$ is the unique $g$-invariant partition on $[n]$ with parts of size $p$, then  $g\in\Alt(n)$, $g$ fixes each $\Gamma_i$ setwise, and either 
    \begin{enumerate}
        \item[(1)] $m=p$ and $g$ lies in the unique $G$-conjugacy class of elements with cycle type $1^pp^{m-1}$; or 
        \item[(2)] $2\leqslant m<p$, and $g$ lies in one of two $G$-conjugacy classes, namely elements with cycle type $1^pp^{m-1}$ or $p^{m}$.
    \end{enumerate}
\end{prop}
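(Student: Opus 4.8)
The plan is to analyse, for a hypothetical {\qsre} $g$ of prime order $p$, the ways in which $g$ can stabilise a partition of $[n]$ into parts of size $k$, and to show that uniqueness of the fixed partition forces $k=p$ with $p$ odd. Throughout I may assume $g$ has prime order $p$ (a nonidentity power of a {\qsre} is {\qsr}), and I write $\Gamma=\{\Gamma_1,\dots,\Gamma_m\}$ for a partition fixed by $g$. Since $g$ permutes the parts, each part lies in a $g$-orbit of parts of length $1$ or $p$; within a part fixed setwise, $g$ acts with orbits of size $1$ or $p$, while the parts forming a length-$p$ orbit contain no fixed points and split into $k$ point-orbits of size $p$ (one point of each such orbit lying in each of the $p$ parts).

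The central tool will be two ``rearrangement moves'' that produce a second fixed partition, thereby contradicting quasi-semiregularity. The first is a shift move: given a block of $p$ parts cyclically permuted by $g$, I label the $k$ point-orbits $O_1,\dots,O_k$ crossing the block and cyclically reindex a single orbit, say $O_2$, relative to the others; this yields a genuinely different $g$-invariant regrouping of the same points (this needs only $k\geqslant2$, which holds as $k>1$). Hence the unique fixed partition of a {\qsre} can have no block of moved parts, so every $\Gamma_i$ is fixed setwise. The second is a swap move between two setwise-fixed parts $\Gamma_i,\Gamma_j$: if each contains a $g$-invariant subset of a common size $v$ with $0<v<k$, exchanging these subsets gives a new fixed partition. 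From these moves I extract the structural constraints: swapping single fixed points shows at most one part contains fixed points; and if $k>p$, choosing $A$ a $p$-cycle inside some part (one exists as $g\neq1$) and $B$ any $g$-invariant $p$-subset of a second part (which exists since that part has size $k>p$, so contains either a $p$-cycle or at least $p$ fixed points) produces a swap, a contradiction. Thus $k\leqslant p$; and $k<p$ is impossible because then every setwise-fixed part consists entirely of fixed points, forcing $g=1$. This pins down $k=p$.

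I then rule out $p=2$ directly: with $k=p=2$ and $n>4$ there are at least two parts that are transpositions of $g$, and forming a two-part moved block from them gives a second fixed partition, so no involution is {\qsr}. For the remaining odd $p=k$, each setwise-fixed part of size $p$ is either a block of $p$ fixed points (type F) or a single $p$-cycle (type C), with at most one type-F part; hence the number $a$ of fixed points lies in $\{0,p\}$ and the cycle type is $1^pp^{m-1}$ (when $a=p$) or $p^m$ (when $a=0$). Counting shows a transversal (moved) block can be assembled precisely when at least $p$ orbits of size $p$ are available, i.e.\ when the number $b$ of $p$-cycles is at least $p$; uniqueness therefore forces $b\leqslant p-1$, giving $2\leqslant m\leqslant p$ in the first case and $2\leqslant m<p$ in the second. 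Conversely, when $b<p$ no moved block can form and the natural partition (the fixed-point block together with the $p$-cycles) is readily checked to be the only $g$-invariant one, establishing sufficiency. Finally, since $p$ is odd a $p$-cycle is even, so $g\in\Alt(n)$; and as the relevant cycle types have repeated parts, the corresponding classes do not split in $\Alt(n)$, yielding a single $G$-class for $1^pp^{m-1}$ and, when $m<p$, a second $G$-class for $p^m$.

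I expect the main obstacle to be the $k=p$ reduction: carefully setting up the shift and swap moves so that each provably yields a \emph{distinct} $g$-invariant partition (handling the unordered nature of partitions, and ensuring the exchanged subsets have a common admissible size) is where the real work lies. Delimiting the exact ranges $m\leqslant p$ versus $m<p$ through the availability of $p$ orbits needed to build a transversal block is a secondary but essential bookkeeping step.
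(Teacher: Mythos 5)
Your proposal is correct and follows essentially the same strategy as the paper's proof: your ``shift move'' on a block of parts permuted by $g$ and your ``swap move'' between setwise-fixed parts are exactly the two constructions the paper uses, and the subsequent analysis (at most one part with fixed points, $k=p$, the transversal obstruction bounding the number of $p$-cycles by $p-1$, and the non-splitting of the relevant classes in $\Alt(n)$) matches the paper's argument. The only differences are minor reorderings (you get $k\leqslant p$ directly by swapping a $p$-cycle against a $p$-subset, and exclude $p=2$ explicitly rather than via the final bounds on $m$), which do not change the method.
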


\begin{proof}
Let  $n=km$ and $G\in\{\Sym(n), \Alt(n)\}$, so that a partition stabiliser $H = (\Sym(k) \wr \Sym(m)) \cap G$. 
Suppose that $H$ fixes the partition
$$\Gamma = \{ \Gamma_1,\ldots, \Gamma_m \}$$
and let $\pi :H \rightarrow \Sym(\Gamma)$ be the action of $H$ on $\Gamma$. Suppose that $g\in H$ is a {\qsre} of prime order $p$. Assume first  that $\pi(g) \neq 1$. Then after relabelling the parts of $\Gamma$, we may assume that $\Gamma_1$, \ldots, $\Gamma_p$ form an orbit of $\langle \pi(g)\rangle$  and $\pi(g) : \Gamma_j \mapsto \Gamma_{j+1}$ (reading subscripts modulo $p$). Label the points in $\Gamma_1$ by $c_{1,i}$ for $1\leqslant i \leqslant k$, and then label the points of $\Gamma_2\cup \cdots \cup \Gamma_p$ as follows:
\[
c_{j,i} := (c_{1,i})^{g^{j-1}} \quad \text{for} \quad 2 \leqslant j \leqslant p, \ 1\leqslant i\leqslant k
\]
  (note that $c_{j,i} \in (\Gamma_1)^{g^{j-1}}=\Gamma_j$). Thus we have labelled all the points in $\Gamma_1,\ldots,\Gamma_p$ and $g$ effects the permutation $c_{j,i} \mapsto c_{j+1,i}$ (reading the subscripts $j, j+1$ modulo $p$). For $j=1,\ldots,p$ set  $\Gamma'_j = \{ c_{j,i} , c_{j+1,k} \mid 1 \leqslant i \leqslant k-1 \}$. Then $(\Gamma'_j)^g = \Gamma'_{j+1}$ (again, reading subscripts modulo $p$). Hence $g$ preserves the partition $$\Gamma ' := \left(\Gamma \setminus \{\Gamma_1,\ldots,\Gamma_p\} \right)\cup \{\Gamma'_1,\ldots,\Gamma'_p\}$$
and since $\Gamma\neq \Gamma'$, this contradicts $g$ being {\qsr}. Hence $\pi(g)=1$. 

\medskip
\noindent \underline{Claim}:  If  there exist nonempty subsets $A\subset \Gamma_i$, $B \subset  \Gamma_j$ such that $A^g = A$ and $B^g=B$ and $|A|=|B| < k$, then $i=j$.

Indeed, suppose $i,j,A,B$ are as in the statement and suppose that $i\ne j$. Define $\Gamma_i' = (\Gamma_i\setminus A) \cup B$ and $\Gamma_j'= (\Gamma_j \setminus B) \cup A$ and $\Gamma' = \{ \Gamma_i',\Gamma_j'\}\cup  \{\Gamma_k \mid k\neq i, j\}$. Then $g$ preserves each $\Gamma_k$ for $k\neq i, j$, and also $\Gamma_i'$ and $\Gamma_j'$. Hence $g$ fixes $\Gamma'$. Also  $\Gamma'\ne \Gamma$ since  $1\leqslant|A|=|B|<k$ implies that $|\Gamma_i\cap \Gamma_i'|\geqslant1$ and $|\Gamma_i \cap \Gamma_j'|\geqslant1$. This is a contradiction since $g$ is {\qsr}.  Hence $i=j$, and the Claim is proved.

\medskip
If $g$ has a fixed point in $\Gamma_1$ then, for each $j\ne 1$, it follows from the Claim that the $g$-action on $\Gamma_j$ is fixed point free.  Thus the $g$-action is fixed point free on at least one of $\Gamma_1$ and $\Gamma_2$, and we may assume that $g$ acts fixed point freely on $\Gamma_2$.     In particular $p$ divides $|\Gamma_2|=k$.   If $k\geqslant 2p$ then choosing a $g$-cycle $B\subset \Gamma_2$ of length $p$, and a $g$-invariant $p$-subset $A\subset \Gamma_1$ (either a $g$-cycle of length $p$ or a set of $p$ fixed points of $g$), we obtain a contradiction to the Claim. Hence $k=p$. Further, this argument shows that either $g$ is fixed point free on $[n]$ with cycle type $p^m$, or  $g$ fixes exactly one part pointwise and has cycle type $1^pp^{m-1}$.

Suppose that $g$ has at least $p$ cycles of length $p$, so that $g$ acts non-trivially on (at least) $p$ parts, say  $\Gamma_1$, \ldots, $\Gamma_p$. Label the points in these parts as $\Gamma_j = \{c_{j,i}\mid 1\leqslant i\leqslant p\}$, for $j=1,\dots, p$, such that $g : c_{j,i} \mapsto c_{j,i+1}$, reading the subscripts $i, i+1$ modulo $p$. For $1\leqslant i \leqslant p$, set $\Gamma_i'=\{c_{j,i} \mid 1\leqslant j \leqslant p\}$. Then the partition $\Gamma':=\{ \Gamma_1',\dots,\Gamma_p',\Gamma_{p+1},\dots, \Gamma_m\}$  is different from $\Gamma$ and is fixed by $g$, a contradiction to $g$ being {\qsr}. We conclude that either $g$ has cycle type $p^m$ with $m\leqslant p-1$, or  $g$ has cycle type $1^pp^{m-1}$ with $m\leqslant p$. In either case, since $n>4$ and $m\geqslant2$, it follows that $p$ is odd and hence that $g\in\Alt(n)$.  Note that both $\Alt(n)$ and $\Sym(n)$ have a unique conjugacy class of elements of this cycle type -- this is clear for $\Sym(n)$ and for $\Alt(n)$ we see that $C_{\Sym(n)}(x)$ is not contained in $\Alt(n)$ for $x$ of cycle type $1^pp^{m-1}$ or $p^m$, and so $\Alt(n)$ acts transitively on the $\Sym(n)$-conjugacy class of $x$.

It remains for us to prove that elements with these cycle types are {\qsr}. Suppose first that $m \leqslant p$ and that $g \in \Alt(n)$ has cycle type $1^pp^{m-1}$ ($p$ fixed points and $m-1$ cycles of length $p$). Let $A_1,\ldots,A_{m-1}$   be the non-trivial orbits of $\langle g\rangle $ and let $F=\mathrm{fix}(g)$. Then $|F|=p$ and  $g$ preserves the partition 
$$
\Gamma=\{A_1,\ldots,A_{m-1}, F\}.
$$
Suppose that $\Gamma'=\{\Gamma_1,\ldots,\Gamma_m\}$ is a partition with parts of size $p$ fixed by $g$, and without loss of generality suppose that $\Gamma_1$ contains a point of $F$ (fixed by $g$). Then $g$ fixes $\Gamma_1$ setwise, and since $|\Gamma_1|=p$ it follows that $g$ must fix $\Gamma_1$ pointwise, so $\Gamma_1=F$. Also $g$ permutes among themselves the remaining $m-1$ parts of $\Gamma'$, and since $m-1<p$ this implies that $g$ fixes each $\Gamma_i$ setwise. Thus, for each $i>1$, $\Gamma_i$ is an orbit of $\langle g\rangle$, and it follows that $\Gamma'=\Gamma$. Hence $g$ is {\qsr}.

A similar, but easier argument shows that, if $m\leqslant p-1$ and  $g \in \Alt(n)$ has cycle type $p^{m}$, then $g$ fixes a unique partition of $[n]$ with $m$ parts of size $p$, namely the partition with parts the $\langle g\rangle$-orbits.  This completes the proof.
\end{proof}

\begin{thm}
\label{thm:alt or sym}
Let $G = \Alt(n)$ or $\Sym(n)$ and let $H$ be a maximal subgroup of $G$ not containing $\Alt(n)$. In the action of $G$ on $[G:H]$, $G$ contains a {\qsre} of prime order $p$ if and only if one of the following holds:
\begin{enumerate}[(1)]
\item $H$ and $p$ are as in one of the lines of Table~\ref{tab:infinite families}; or
\item $G=\Alt(n)$ and $H$, $p$ and $n$ are as in one of the lines of Table~\ref{tab:exceps for altn}.
\end{enumerate}
\end{thm}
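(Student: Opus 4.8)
The plan is to invoke the classification of maximal subgroups of the symmetric and alternating groups, namely the O'Nan--Scott theorem for $\Sym(n)$ and $\Alt(n)$ (the Liebeck--Praeger--Saxl reduction), which partitions the maximal subgroups $H$ not containing $\Alt(n)$ into the intransitive type (set stabilisers, handled by Proposition~\ref{prop:k-set stab}), the imprimitive type (partition stabilisers, handled by Proposition~\ref{prop:k-part stab}), the primitive-but-not-containing-$\Alt(n)$ types, and the almost simple primitive type. The two combinatorial propositions already settle the first two families, producing the $k$-subset and partition lines of Table~\ref{tab:infinite families}, so the bulk of the remaining work is to treat the primitive maximal subgroups $H$ of affine, diagonal, product, and almost simple type acting on $[n]$. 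First I would dispose of the non-almost-simple primitive types and the bounded-degree almost simple cases by the centraliser criterion: by Lemma~\ref{lem: qs and sylow}, a {\qsre} of order $p$ forces $H$ to contain a Sylow $p$-subgroup of $G$, so $p \nmid n$ is impossible and in fact $p \mid n-1$ or a closely related divisibility must hold; combined with Corollary~\ref{cor: normalisers equals implies qsr} this sharply restricts which $(H,p)$ can occur.

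Second, I would treat the two infinite primitive families that survive, namely the affine subgroups $\AGL(1,p)\cap G$ (with $n=p$) and the projective subgroups $\PGL(2,p)\cap G$ (with $n=p+1$). For the affine line, a {\qsre} is a generator of the cyclic group of order $p$ acting on $[n]=[p]$ with a single fixed point and one $p$-cycle; here the key is to verify condition (2) of Corollary~\ref{cor: normalisers equals implies qsr}, that all $G$-conjugates of $\langle x\rangle$ inside $H=\AGL(1,p)\cap G$ are $H$-conjugate, and that $N_G(\langle x\rangle)=N_H(\langle x\rangle)$. The subtlety flagged in Table~\ref{tab:infinite families} is the condition $p \neq 7,11,17,23$ when $G=\Alt(n)$: for these small primes the element of order $p$ lies in a larger primitive almost simple maximal subgroup (such as $\PSL(3,2)$, $M_{11}$, $M_{23}$, etc.), so $H=\AGL(1,p)\cap G$ is no longer maximal in $\Alt(p)$, or the fusion/normaliser conditions fail, and this exactly accounts for the exceptional rows of Table~\ref{tab:exceps for altn}. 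For the projective line $\PGL(2,p)$ on $p+1$ points with $p\geqslant 5$, a {\qsre} of order $p$ is a unipotent (single fixed point, one $(p)$-cycle), and I would check the normaliser condition directly using the Borel structure $N_{\PGL(2,p)}(\langle x\rangle)=\AGL(1,p)$.

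Third, the heart of the argument is the remaining primitive almost simple maximal subgroups $H$ of $\Alt(n)$ or $\Sym(n)$: these give the finitely many exceptional actions. For each such $H$ with socle a group of Lie type, alternating group, or sporadic group embedded primitively in $\Sym(n)$, I would use Lemma~\ref{lem: qs and sylow}(1) to force $p \mid n-1$ (equivalently $\gcd(p,n)=1$ with $p\mid n-1$ since a single fixed point and cycles of length $p$ give $n\equiv 1\pmod p$), which leaves only a bounded list of candidate primes, and then apply Corollary~\ref{cor: normalisers equals implies qsr} (the fusion and normaliser conditions $N_G(K)=N_H(K)$ and $K^G\cap H=K^H$). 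The candidate subgroups are precisely those where $H$ contains a Sylow $p$-subgroup of $G$ with the required $p$ cyclic and self-normalising structure; running through the finite list using character-theoretic data or the explicit structure of these subgroups produces exactly the rows of Table~\ref{tab:exceps for altn}, and these are verified by appeal to known maximal subgroup data (and, where needed, {\sc Magma}).

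I expect the main obstacle to be the almost simple primitive case in the third step: unlike the intransitive and imprimitive families, there is no single clean combinatorial criterion, so one must individually analyse each primitive embedding $H \hookrightarrow \Sym(n)$ and verify the two conditions of Corollary~\ref{cor: normalisers equals implies qsr}. The delicate points are (i) controlling the fusion $K^G\cap H=K^H$ of the cyclic subgroup $K=\langle x\rangle$, which can fail when $G=\Alt(n)$ even though it holds in $\Sym(n)$ (this is the source of the $G=\Alt(n)$ restriction in the affine and $n\equiv 1\pmod 4$ conditions), and (ii) ensuring completeness of the finite list, i.e. that no primitive almost simple maximal subgroup other than those in Table~\ref{tab:exceps for altn} contains a {\qsre}. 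Establishing that the list is exhaustive relies on the bound $p\mid n-1$ together with the fact that a primitive group of degree $n$ containing an element with cycle type $1^1 p^{(n-1)/p}$ is highly constrained (for instance by results on elements of large prime order in primitive groups), so that only the small sporadic and Lie-type configurations listed can arise.
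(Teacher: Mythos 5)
Your opening decomposition matches the paper's: the intransitive and imprimitive maximal subgroups are exactly the $k$-subset and partition stabilisers, handled by Propositions~\ref{prop:k-set stab} and~\ref{prop:k-part stab}, and the residual case is a maximal $H$ that is primitive on $[n]$. The checks you describe for $\AGL(1,p)$ and $\PGL(2,p)$ via Corollary~\ref{cor: normalisers equals implies qsr} are also in the right spirit. However, there is a genuine gap at the heart of your third step: the divisibility constraint you propose to drive the classification is wrong, because you conflate the coset action on $[G:H]$ with the natural action on $[n]$. A {\qsre} $x$ of order $p$ is quasi-semiregular on $\Omega=[G:H]$, so Lemma~\ref{lem: qs and sylow} gives $|G:H|\equiv 1\pmod p$ and that $H$ contains a Sylow $p$-subgroup of $G$ --- it does \emph{not} give $n\equiv 1\pmod p$, and $x$ need not have cycle type $1^1p^{(n-1)/p}$ on $[n]$. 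Indeed the actual examples contradict your constraint: $H=\AGL(1,p)\cap G$ has $n=p$, and $H=\mathrm{P}\Gamma\mathrm{L}(2,8)$ in $\Alt(9)$ has $p=7=n-2$, so $p\nmid n-1$ in both. Without a correct restriction on $p$ relative to $n$ you have no route to exhaustiveness of the finite list in Table~\ref{tab:exceps for altn}.

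The missing idea is the following chain, which is how the paper proceeds. Since $H$ contains a Sylow $p$-subgroup of $G$ and $p\leqslant n$, $H$ contains \emph{some} element $y$ of order $p$ acting as a single $p$-cycle on $[n]$ (not necessarily $x$ itself). As $H$ is primitive on $[n]$ and does not contain $\Alt(n)$, Jordan's theorem on primitive groups containing a $p$-cycle with $p\leqslant n-3$ forces $n-2\leqslant p\leqslant n$. One then invokes Jones's classification of primitive groups of degree $n$ containing a cycle fixing at most two points, which yields a short explicit list of candidates ($\AGL(1,p)$, subgroups between $\PGL(d,q)$ and $\mathrm{P}\Gamma\mathrm{L}(d,q)$, $\AGL(d,2)$, $\PSL(2,p)\leqslant H\leqslant\PGL(2,p)$, $\mathrm{P}\Gamma\mathrm{L}(2,q)$, and the Mathieu groups), and each candidate is decided by the single condition $N_H(\langle x\rangle)=N_G(\langle x\rangle)$ --- the fusion condition of Corollary~\ref{cor: normalisers equals implies qsr} is automatic here because $\langle x\rangle$ is a Sylow $p$-subgroup of both $H$ and $G$, so your worry (i) about fusion in $\Alt(n)$ does not arise in the primitive case (the $n\equiv 1\pmod 4$ condition comes from the $k$-subset case with $k=1$, $p=2$). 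Your appeal at the end to ``results on elements of large prime order in primitive groups'' gestures at the right tool, but you never establish that $p$ is large relative to $n$, and your proposed substitute $p\mid n-1$ is false.
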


\begin{proof}
First note that the theorem is true for $n<5$, and easily confirmed by {\sc Magma}. We assume therefore that $n\geqslant 5$, and let $\Omega = [G:H]$. The case where  
$H$ is the stabiliser of a $k$-subset of $[n]$ with $1\leqslant k < n/2$, is handled by Proposition~\ref{prop:k-set stab}, and the case where $H$ is the stabiliser of a  partition of $[n]$, with parts of size $k$, is handled by Proposition~\ref{prop:k-part stab}. These cases correspond to part  (1)  of the theorem and lines $1$  and $2$ of Table~\ref{tab:infinite families}. From now on we may assume that $H$ is a maximal subgroup of $G$ that is transitive and primitive on $[n]$.

Suppose that $x\in H$ has prime order $p$ and is {\qsr} in the action of $G$ on $[G:H]$. Lemma~\ref{lem: qs and sylow} implies that $H$ contains a Sylow $p$-subgroup $S$ of $G$. Hence $H$ contains an element $y$ of order $p$ such that $y$ is a $p$-cycle on $[n]$. If $p \leqslant n-3$, then a theorem of Jordan \cite[Theorem 3.3E]{DixonMortimer} implies that $H$ contains $\Alt(n)$, a contradiction. Hence $n-2 \leqslant p \leqslant n$. In particular, since $n\geqslant 5$, the prime $p$ is odd and $p>n/2$, so $p^2$ does not divide $n$ and $S:=\langle x \rangle$ is a Sylow $p$-subgroup of both $G$ and $H$. Note that  $x$ has $k$ fixed points on $[n]$ where $0 \leqslant k \leqslant 2$. If $k=2$, then  $N_{\Sym(n)}(S)$ contains a transposition and Lemma~\ref{lem: qs and sylow} shows that $N_G(S) =  N_H(S) $; since $H$ does not contain a transposition, we must have $G = \Alt(n)$.

The study of primitive permutation groups containing such a  $p$-cycle has a rich history. Such groups are well understood, and a list of them  can be found in \cite[Theorem 1.2]{Jones}. We consider the outcomes of that theorem below.   We note that, by Corollary~\ref{cor: normalisers equals implies qsr}, an element $x\in H$ of order $p$ is {\qsr} if and only if $N_H(\langle x \rangle)=N_G(\langle x \rangle)$ (since $\langle x \rangle$ is a Sylow $p$-subgroup of $G$). 

If \cite[Theorem 1.2(1)(a)]{Jones} holds, then $n=p$ and $H= \AGL(1,p) \cap G$.   Also $N_H(\langle x \rangle)=N_G(\langle x \rangle)$ for an element $x$ of order $p$, so $G$ contains a {\qsre}.  Moreover, the subgroup $H$ is indeed maximal in $G$, except when  $G = \Alt(p)$ and $p=7,11,17,23$, see \cite[Theorem(I)]{LPSAltSymMax}. This gives row $3$ of Table~\ref{tab:infinite families} and we are in case (1) of our theorem.

If \cite[Theorem 1.2(1)(b)]{Jones} holds, then  $n=p=(q^d-1)/(q-1)\geqslant5$ and $\PGL(d,q) \leqslant H \leqslant \mathrm{P}\Gamma\mathrm{L}(d,q)$. In particular, $d$ must be prime. Writing $q=r^e$, we see that
  $(q^d-1)/(q-1).d=pd$ divides $|N_H(S)|$, and $|N_H(S)|$ divides 
$(q^d-1)/(q-1).de=pde.$
On the other hand $|N_G(S)| = p(p-1)/t$ (where $t=1,2$ for $G=\Sym(p)$,  $\Alt(p)$, respectively). By  Lemma~\ref{lem: qs and sylow} we have $N_G(S) = N_H(S)$. 
Hence $p(p-1)/t = pde'$, for some divisor $e'$ of $e$, so $tde'=p-1 = q(q^{d-1}-1)/(q-1)= q(q^{d-2}+\cdots+q+1)$. If $r$ were odd then this would imply that $q=r^e$ divides $de$ (since $t\leqslant2$ and $e'\vert e$), which in turn implies that $d=r$ (since $d$ is prime) and $e=e'=1$. However we would then have $2r\geqslant tde'\geqslant r(r^{r-2}+1)$, which is not possible. Hence $r=2$.  
If $d=2$ then we require $q=2^e\geqslant8$ (since $H\not\geqslant \Alt(n)$), and $2te'=tde'=q=2^e$; this implies that $e=e'=4$ so  $H=\mathrm{P}{\Gamma}\mathrm{L}(2,2^4)$, which is a maximal subgroup of  
$\Alt(17)$ by  \cite[Theorem(II)]{LPSAltSymMax}.   Here, for an element $x\in H$ of order $17$, $N_H(\langle x \rangle)= \langle x \rangle.8= N_G(\langle x \rangle)$, so $x$ is {\qsr}. Thus we are in line 6 of Table~\ref{tab:exceps for altn}, and case (2) of our theorem.
So we may assume that $d\geqslant3$.
 Now $e'\leqslant e\leqslant 2^e=q$ and $t\leqslant 2$, so  
\[ 
q^{d-2}+\cdots + q+1 = \frac{tde'}{q} \leqslant 2d,
\]
and it follows that $d= 3$ (since $d$ is prime) and $q\leqslant 4$. Thus $3te'=2^e(2^e+1)$, and this implies that $q=t=2$, so $H=\PSL(3,2)$ which is a maximal subgroup of $G=\Alt(7)$,   and an element $x\in H$ of order $7$ is {\qsr} since $N_H(\langle x \rangle)= \langle x \rangle.3= N_G(\langle x \rangle)$. Thus we are in line $1$ of Table~\ref{tab:exceps for altn} and in case (2) of our theorem.

If \cite[Theorem 1.2(1)(c)]{Jones} holds (again $n=p$), then $p=11$ and $H$ is $\PSL(2,11)$ or  $M_{11}$, or $p=23$ and  $H=M_{23}$. Since we require $H$ to be maximal, we have $H=M_{11}$ or $H=M_{23}$. In these cases, an element of $H$ of order $p$ is {\qsr} by Corollary~\ref{cor: normalisers equals implies qsr}. Thus we are in case (2) of the theorem and this gives lines $4$ and $7$ of Table~\ref{tab:exceps for altn}.

If \cite[Theorem 1.2(2)(a)]{Jones} holds, then $n=q^d>4$, $\AGL(d,q)\leqslant H\leqslant \mathrm A\Gamma \mathrm
 L(d,q)$, and $p=n-1=q^d-1$ is prime. The latter implies that $q=2^e$ and  either $d=1$ with $e$  prime, or $e=1$ with $d$ prime. Since $H$ is maximal in $G$, and $\mathrm A\Gamma \mathrm
 L(1,2^e) \leqslant \AGL(e,2)$, it follows that $e=1$,   $H=\AGL(d,2)$, and  $ G = \Alt(n)$. Now for $x\in H$ of order $p=2^d-1$, the normalisers $N_H(\langle x \rangle) = C_p.C_d$ and $N_G(\la x \ra) = C_p.C_{(p-1)/2}$, so $x$ is {\qsr} if and only if these subgroups are equal, that is, $(p-1)/2 = d$, or equivalently, $2d+1=p=2^d-1$. Since $n=2^d>4$, this implies that $d=3$ and $H =\AGL(3,2)$. This gives case (2) and the entry in line $2$ of Table~\ref{tab:exceps for altn}.

If \cite[Theorem 1.2(2)(b)]{Jones} holds, then $n=p+1$ and $H=\PSL(2,p)$ or $\PGL(2,p)$ (note that $\PGL(2,p)\nleqslant \Alt(p+1)$ so $G=\Sym(n)$ in this case). Here, for $x\in H$ of order $p$,  $N_H(\langle x\rangle) = N_G(\langle x\rangle)$, so $x$ is {\qsr}. This gives case (1) and the entry in line 4 of Table~\ref{tab:infinite families}.

If \cite[Theorem 1.2(2)(c)]{Jones}  holds, then either $n=12$ and $H$ is $M_{11}$ or  $M_{12}$,  or $n=24$ and $H=M_{24}$. Since $H$ is maximal in $G$, the group $H$ is $M_{12}$ or $M_{24}$, and $G=\Alt(12)$ or $\Alt(24)$, respectively. This gives case (2) and the entries in lines  $5$ and $8$ of Table~\ref{tab:exceps for altn}.

If \cite[Theorem 1.2(3)]{Jones} holds, then $n=p+2$ (so $G=\Alt(n)$) and $\PGL(2,q) \leqslant H \leqslant \mathrm{P}\Gamma \mathrm{L}(2,q)$ with $n=q+1$. Thus $n>4$ (since $H\not\geqslant\Alt(n)$), and $p=n-2 = q-1$ is prime. It follows that $q=2^e$ and $p=2^e-1$ is a Mersenne prime, so in particular $e$ is an odd prime  and hence $\mathrm{P}\Gamma \mathrm{L}(2,q)\leqslant \Alt(n)=G$. Since $H$ is maximal, we have $H=\mathrm{P}\Gamma \mathrm{L}(2,q)$.  Now, for $x\in H$ of order $p$, we have $N_H(\langle x \rangle) = D_{2p} . C_e$ and $N_G(\langle x \rangle) = C_p.C_{p-1}$, so $x$ is {\qsr} if and only if these subgroups are equal, that is, $2e=p-1 = 2^e-2$. This implies, since $e$ is an odd prime, that $e=3$,   $n=9$ and $H=\mathrm{P}\Gamma\mathrm{L}(2,8)$. This gives case (2) and the entries in line  $3$ of Table~\ref{tab:exceps for altn}.
\end{proof}

\begin{rem} Concerning row $3$ of Table~\ref{tab:infinite families}. For $p = 7,11,17,23$ and $G=\Alt(p)$, the group $\AGL(1,p) \cap \Alt(p) \cong C_p.C_{(p-1)/2}$ is not maximal in $\Alt(p)$, and is contained in subgroups isomorphic to $\PSL(3,2)$, $M_{11}$, $\mathrm P \Gamma \mathrm L(2,16)$, $M_{23}$, respectively.

    Concerning row $4$ of Table~\ref{tab:infinite families}, when $p$ is odd and $n=p+1$, an easy calculation shows  $\PSL(2,p)=\PGL(2,p) \cap \Alt(n)$.
\end{rem}

\begin{cor}
    Suppose that $G$ is a primitive permutation group on a set $\Omega$ and that $\soc(G)=\Alt(n)$ for some $n\geqslant 5$. If $g\in G$ is a {\qsre} of order  $2$, then $n$ is odd and the $G$-action on $\Omega$ is its natural action on $n$ points. Moreover if $n\equiv 3 \pmod 4$, then $G=\Sym(n)$.
\end{cor}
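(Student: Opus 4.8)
The plan is to reduce everything to the classification already obtained in Theorem~\ref{thm:alt or sym}, together with the earlier proposition on groups with socle $\Alt(6)$, and then simply read off the cases with $p=2$. Since $g$ has order $2$, it is a {\qsre} of prime order $p=2$, and the point stabiliser $H=G_\alpha$ is a maximal core-free subgroup of the almost simple group $G$; in particular $H$ does not contain $\soc(G)=\Alt(n)$. So once the isomorphism type of $G$ is pinned down, the classification applies verbatim.

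First I would reduce to $G\in\{\Alt(n),\Sym(n)\}$. When $n\neq6$ we have $\Aut(\Alt(n))=\Sym(n)$, so this is automatic. When $n=6$ and $G\notin\{\Alt(6),\Sym(6)\}$, the relevant primitive actions are described by the proposition on groups with socle $\Alt(6)$, whose Table~\ref{tab:auta6} records {\qsres} only of order $3$ or $5$; hence no {\qsre} of order $2$ arises, and this possibility may be discarded. Thus I may assume $G\in\{\Alt(n),\Sym(n)\}$, so that Theorem~\ref{thm:alt or sym} applies directly.

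Next I would scan the two tables of Theorem~\ref{thm:alt or sym} for entries admitting $p=2$. In Table~\ref{tab:infinite families} the partition-stabiliser row requires $p$ odd, the affine row forces $n=p=2<5$, and the projective-line row requires $p\geqslant5$, so all three are excluded; and Table~\ref{tab:exceps for altn} has $p\geqslant7$ throughout. The only surviving possibility is the $k$-subset row, whose condition $k<p=2$ together with $1\leqslant k$ forces $k=1$, while $p\mid n-k$ becomes $2\mid n-1$, that is, $n$ is odd. Consequently $H$ is the stabiliser of a point of $[n]$, the $G$-action on $\Omega$ is the natural action on $n$ points, and $n$ is odd; in particular $n\neq6$, consistent with the reduction above. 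This settles the first two assertions.

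Finally, the moreover clause comes from the side-condition in the $k$-subset row of Table~\ref{tab:infinite families}: a {\qsre} of order $2$ with $(p,G)=(2,\Alt(n))$ requires $n\equiv1\pmod4$. Hence if $n\equiv3\pmod4$ then $G\neq\Alt(n)$, and since $G\in\{\Alt(n),\Sym(n)\}$ we conclude $G=\Sym(n)$. I expect no substantive obstacle here: the only care needed is the bookkeeping of verifying that the exceptional almost simple groups with socle $\Alt(6)$ are genuinely covered, and of reading the parity side-condition for $p=2$ correctly, rather than any hard step.
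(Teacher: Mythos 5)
Your proposal is correct and follows exactly the route the paper intends: the corollary is stated without a separate proof precisely because it is read off from Theorem~\ref{thm:alt or sym} (plus the $\Alt(6)$ proposition) by isolating the entries compatible with $p=2$, which is what you do. Your handling of the $n=6$ exceptional groups, the forced values $k=1$ and $n$ odd from the $k$-subset row, and the $n\equiv 1\pmod 4$ side-condition giving the moreover clause all match the paper's classification.
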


\section{The sporadic groups}

In this final section we consider  primitive permutation groups $G$ with point-stabiliser $H$ such that $\soc(G)$ is one of the twenty-six sporadic simple groups. Most of the work can be done computationally using \cite{GAP4}, but the Monster requires specialised arguments.

\subsection{Computations}

\begin{lem}
\label{lem: sporadic but not monster}
    Let $G$ be an almost simple group with socle a sporadic simple group other than the Monster. Let $H$ be a maximal subgroup of $G$. Then there is a {\qsre} of prime order $p$ in the action of $G$ on $[G:H]$ if and only if there is a $G$-conjugacy class of elements of order $p$ in the row corresponding to $H$ in one of Tables~\ref{tab:spor1}--\ref{tab:spor3}.
\end{lem}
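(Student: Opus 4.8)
The plan is to convert the statement into a finite computation governed by the fixed-point formula of Lemma~\ref{lem: x is qsres} and to carry this out with the character table library in \cite{GAP4}. First, since $x$ has prime order $p$, every cycle of $x$ on $\Omega=[G:H]$ has length $1$ or $p$; hence $x$ is {\qsr} if and only if it has a unique fixed point, i.e. $\pi(x)=1$, where $\pi$ is the permutation character of $G$ on $[G:H]$. By Lemma~\ref{lem: x is qsres}, $\pi=\mathrm{Ind}_H^G(1_H)$ and $\pi(x)=|G:H|\cdot|x^G\cap H|/|x^G|$ (which recovers the two conditions of Corollary~\ref{cor: qsr iff cent condition}). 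So the task reduces to deciding, for each $G$, each maximal subgroup $H$ and each prime $p$, whether some $G$-class of elements of order $p$ meeting $H$ satisfies $\pi(x)=1$.

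Second, I would prune the primes. By Lemma~\ref{lem: qs and sylow}(1) a {\qsr} $p$-element can lie in $H$ only if $H$ contains a Sylow $p$-subgroup of $G$, equivalently $p\nmid|G:H|$; this leaves only a handful of primes for each pair $(G,H)$. With this reduction in hand, I would run over all sporadic simple groups $T$ other than the Monster, over the almost simple groups $G$ with $\soc(G)=T$ (so $T\leqslant G\leqslant\Aut(T)$), and over representatives of the conjugacy classes of maximal subgroups $H$ recorded in \cite{GAP4} (equivalently in the ATLAS). For each admissible prime $p$, I would form $\mathrm{Ind}_H^G(1_H)$ from the stored class fusion of $H$ in $G$, evaluate it on the $G$-classes of order $p$ meeting $H$, and list those on which it equals $1$; by the first paragraph these are exactly the {\qsr} classes, and they are precisely the entries appearing in Tables~\ref{tab:spor1}--\ref{tab:spor3}.

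The main obstacle is that for several maximal subgroups the class fusion $H\to G$ is either not stored or not uniquely determined by the available invariants. In those cases I would compute all fusions compatible with the element orders, power maps, and the requirement that $\mathrm{Ind}_H^G(1_H)$ be a genuine character of $G$ (nonnegative integer multiplicities against the irreducibles), and then check whether the existence or non-existence of an order-$p$ class with $\pi(x)=1$ is the same for every admissible fusion. When a genuine ambiguity remains, I would resolve it unconditionally by passing to an explicit permutation or matrix representation of $G$: realise $H$ as a point stabiliser, locate the elements of order $p$ in $H$, and count their fixed points directly to obtain $\pi(x)$. This determines each relevant value $\pi(x)$ without reference to the disputed fusion, completing the verification and yielding exactly the rows listed in the tables.
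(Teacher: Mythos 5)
Your proposal is correct and follows essentially the same route as the paper: a character-theoretic computation over all pairs $(G,H)$ using the stored class fusions in the GAP character table library, with your criterion $\pi(x)=1$ for elements of prime order being equivalent (via Lemma~\ref{lem: x is qsres}) to the two conditions of Corollary~\ref{cor: qsr iff cent condition} that the paper checks. The only divergence is in the single case where the fusion is unavailable, namely $G=B$ with $H=(2^2\times F_4(2)).2$: the paper resolves it by a short bespoke argument (Sylow and centraliser-index constraints reduce to $p=17$, where the two order-$17$ classes of $F_4(2)$ fuse in $H$), whereas your second fallback of building an explicit permutation representation would be infeasible there since $|G:H|\approx 10^{17}$, so you would need your first fallback (enumerating admissible fusions) or a direct argument of the paper's kind to actually close that case.
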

\begin{proof}
The proof is computational and makes use of the GAP \cite{GAP4} package CTblLib \cite{CTblLib}. For a group $G$ as in the statement, the character table of $G$ is stored in GAP. We illustrate the process for $G=M_{11}$ below:
\begin{verbatim}
    t:=CharacterTable("M11");;
\end{verbatim} If the list of maximal subgroups of $G$ is known, then the function `HasMaxes' returns true on the character table of $G$. In this case the list of maximal subgroups can be found and stored as follows:
\begin{verbatim}
    maxes:=Maxes(t);;
\end{verbatim}
and for the $i$th maximal subgroup, its character table and the fusion rules, the membership map from the set of conjugacy classes of $H$ to the set of conjugacy classes of $G$ (if known), are found using:
\begin{verbatim}
    ct:=CharacterTable(maxes[i]);;
    Fus:=GetFusionMap(ct,t);;
\end{verbatim}
With all of this information at hand, for a given element $x \in H$, we can compute $|C_G(x)|$ and $|C_H(x)|$ from the character tables (the function `SizesCentralisers' is implemented to do this). Further, using the fusion map we can compute whether $x^G \cap H = x^H$. We thus have enough information to decide if the element $x$ is {\qsr} or not. This procedure works for all sporadic simple groups (other than the Monster), except when $G=B$, the Baby Monster, and $H=(2^2 \times F_4(2)).2$, and in this case the fusion map is unknown. We now treat this exceptional case.

If $x\in H$ is a {\qsre} of prime order $p$ then $|G|_p=|H|_p$, by Lemma~\ref{lem: qs and sylow}(1),  and so $p\in \{7,13,17\}$. If $p=7$, then $5 \mid |C_G(x):C_H(x)|$ and if $p=13$ then $3 \mid |C_G(x):C_H(x)|$, and hence elements of these orders $p$ are not {\qsr}, by Lemma~\ref{lem: qs and sylow}(3). Thus $p=17$. Since $|C_G(x)|=2^2.17$, we see $C_G(x)=C_H(x)$. Further, $F_4(2)$ has two classes of elements of order $17$, which are fused in $F_4(2):2$ (since the centraliser does  not grow), and hence in $H$. Thus elements of $H$ of order $17$ are {\qsr}, by Corollary~\ref{cor: qsr iff cent condition}.
\end{proof}

\subsection{The Monster}

The maximal subgroups of the Monster have been well studied, and `almost complete' lists have existed since the publication of the Atlas \cite{ATLAS} in 1985. The `unknown' maximal subgroups were believed to belong to a short list \cite{Wilson}. Recently, the `mmgroup' package of Seysen \cite{Seysen1,Seysen2,Seysen3} caused a paradigm shift in the study of the Monster and its subgroups.  Dietrich, Lee and Popiel \cite{DLP} were the first to cast new light on the study of maximal subgroups using this package, and found several corrections to the known lists. As such, the maximal subgroups we consider below are drawn from \cite{DLP,DLPP}.

\begin{lem}
\label{lem: sporadic and monster}
    Let $G$ be the Monster sporadic simple group and let $H$ be a maximal subgroup of $G$. There is a {\qsre} of prime order $p$ in the action of $G$ on $[G:H]$ if and only if $p$ appears in the row corresponding to $H$ in Table~\ref{tab:M}.
\end{lem}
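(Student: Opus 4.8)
The plan is to establish the criterion of Corollary~\ref{cor: qsr iff cent condition} --- namely that an element $x\in H$ of prime order $p$ is {\qsr} if and only if $C_G(x)=C_H(x)$ and $x^G\cap H=x^H$ --- for each maximal subgroup $H$ of the Monster $G=\mathbb{M}$ and each relevant prime $p$. The outer automorphism group of $\mathbb{M}$ is trivial, so $G$ is simple and we need only treat $G=\mathbb{M}$ itself. The first reduction, via Lemma~\ref{lem: qs and sylow}(1), is that any prime $p$ for which some element of order $p$ in $H$ is {\qsr} must satisfy $|G|_p=|H|_p$, i.e.\ $H$ contains a full Sylow $p$-subgroup of $G$. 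This immediately restricts attention, for each $H$, to the (few) primes dividing $|\mathbb{M}|$ for which the $p$-part of the index $|G:H|$ is $1$; all other primes can be discarded without further work.

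For the primes surviving this test, I would verify the two conditions of Corollary~\ref{cor: qsr iff cent condition} using the ordinary character table of $\mathbb{M}$ (available in GAP's CTblLib \cite{CTblLib}) together with knowledge of the conjugacy classes of each maximal subgroup $H$ drawn from \cite{DLP,DLPP}. The centraliser condition $C_G(x)=C_H(x)$ is checked via the stored centraliser orders: one computes $|C_G(x)|$ from the Monster's character table and $|C_H(x)|$ from $H$, and since $C_H(x)\leqslant C_G(x)$ always holds, equality reduces to comparing these two integers. For the larger primes this is often decisive by itself: when $|C_G(x)|$ is a $p$-local number with few divisors, the constraint is tight, and Lemma~\ref{lem: qs and sylow}(3) (the growth of the centraliser index by a prime $\neq p$) can rule elements out quickly, exactly as in the Baby Monster case handled in Lemma~\ref{lem: sporadic but not monster}. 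The fusion condition $x^G\cap H=x^H$ requires the fusion map from $H$-classes to $G$-classes; where this map is recorded in CTblLib it is a direct lookup, and where it is not, one argues using power maps and centraliser orders to pin down which $\mathbb{M}$-class each $H$-class of order $p$ fuses into.

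The main obstacle will be the maximal subgroups $H$ for which the class-fusion map into $\mathbb{M}$ is not available in the library --- these are precisely the cases that cannot be resolved by the uniform computation used for the other sporadic groups in Lemma~\ref{lem: sporadic but not monster}, and they demand the ``specialised arguments'' flagged in the section introduction. For such an $H$ and a surviving prime $p$, the strategy is to bound the situation from both sides: the Sylow condition forces $p^a\mid |H|$ with $p^a=|G|_p$, so the local subgroup structure of $H$ at $p$ determines the number and sizes of $H$-classes of order $p$, while the centraliser order $|C_{\mathbb{M}}(x)|$ (read from the Monster's table) constrains which $\mathbb{M}$-classes these can fuse to. Typically $|C_{\mathbb{M}}(x)|$ is so small for the large primes that $C_G(x)=C_H(x)$ is forced, and then one only needs that all $H$-elements of order $p$ are genuinely $H$-conjugate (or that the several $H$-classes all fuse to a single $\mathbb{M}$-class, as with the order-$17$ elements in the Baby Monster argument), which can be read off from $N_{\mathbb{M}}(\langle x\rangle)$ and the action of $N_H(\langle x\rangle)$ on $\langle x\rangle$. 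Assembling these case-by-case verifications, and confirming that the resulting list of $(H,p)$ pairs matches exactly the entries recorded in Table~\ref{tab:M}, completes the proof.
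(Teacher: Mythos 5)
Your proposal follows essentially the same route as the paper: reduce via the Sylow condition of Lemma~\ref{lem: qs and sylow}(1), verify the two conditions of Corollary~\ref{cor: qsr iff cent condition} computationally via CTblLib where the class fusion into $\mathbb{M}$ is stored, and handle the remaining maximal subgroups by hand using centraliser orders and fusion/class-counting arguments (the paper's actual hand cases being $2^{5+10+20}.(S_3\times L_5(2))$ with $p=31$, killed by the fusion condition since the six relevant classes of $L_5(2)$ cannot fuse into only two $\mathbb{M}$-classes of $31$-elements without splitting, and $59{:}29$, where $p=59$ survives). The only detail you omit is the paper's observation that the two involution centralisers are themselves maximal, which disposes of $p=2$ for every other $H$ at a stroke; otherwise your outline is the paper's proof.
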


\begin{proof}
    For maximal subgroups    whose character table and fusion rules are stored in \cite{CTblLib}, we can compute the {\qsres} in GAP \cite{GAP4} using the procedure described in the proof of  Lemma~\ref{lem: sporadic but not monster}.  (The command \begin{verbatim}NamesOfFusionSources(CharacterTable("M"));\end{verbatim} tells us which of the maximal subgroups of the Monster we can apply  the described method to.)
    The results computed using GAP are indicated by the entry `GAP' in the `Notes' column of Table~\ref{tab:M}. 

    From now on we assume that $H$ is a group for which the character table is not stored in \cite{CTblLib}. We will consider each maximal subgroup $H$ in turn. We assume that $x\in H$ is a {\qsre} of prime order $p$. In particular, by Lemma~\ref{lem: qs and sylow}, 
    \begin{equation}
        \label{eq:gp=hp} |G|_p=|H|_p \quad \mbox{and}\quad C_G(x)=C_H(x).
    \end{equation} 
    Note that $G$ has two classes of involutions, the centralisers of which are maximal. In particular, if $H$ is a maximal subgroup that is not an involution centraliser, then no involution in $H$ is {\qsr}. This means (since the two involution centralisers could be, and were, considered using GAP) that we may assume $p>2$ from now on.

    Suppose that $H=2^{5+10+20}.(S_3 \times L_5(2))$. From \eqref{eq:gp=hp} we have that $p=31$. Now $G$ has two classes of elements of order $31$ and representatives may be chosen to be inverses of each other \cite{ATLAS}. Since $L_5(2)$ has six classes of such elements, each class must split into three $H$-conjugacy classes. It follows that $x$ is not {\qsr} by Corollary~\ref{cor: qsr iff cent condition}.

    Next suppose that $H=59:29$. From \eqref{eq:gp=hp} we have $p=29$ or $59$. In the first case  $H$ does not contain $C_G(x)$, whereas in the latter case, we have an example, as in the top right line of Table~\ref{tab:M}. 
    \end{proof}

\subsection{Proof of Theorem~\ref{thmintro: sporadic} and Corollary~\ref{cor: more than one class of qsrs}}

Theorem~\ref{thmintro: sporadic} follows from Lemma~\ref{lem: sporadic but not monster} and Lemma~\ref{lem: sporadic and monster}.

\medskip
For Corollary~\ref{cor: more than one class of qsrs}, $G$ is a primitive permutation group of degree $n$ with socle an alternating or sporadic group. We first consider the case where $\soc(G)$ is alternating. The result is vacuously true if $n<2p$ since $G$ has only one class of subgroups of order $p$; and we see that this holds for all rows of Table~\ref{tab:exceps for altn} and all but the first two rows of   Table~\ref{tab:infinite families}. For  rows $1$ and $2$ of Table~\ref{tab:infinite families} we use Propositions~\ref{prop:k-set stab} and ~\ref{prop:k-part stab} respectively, which show that a {\qsre} of prime order $p$ has a unique cycle type unless $n<p^2$.

Now we consider the case that $\soc(G)$ is a sporadic group and has a {\qsre} of prime order $p$.  If $|G|_p=p$, then it follows from Sylow's Theorem that there is a unique conjugacy class of {\qsr} subgroups. Further, if there is a unique class of {\qsres} of order $p$, then there is nothing to do. Considering Tables~\ref{tab:M}-\ref{tab:spor3}, we are left to consider the pairs $(\soc(G),p)=(HS,5)$, $(J_2,5)$, $(McL,5)$, $(He,7)$ and $(J_4,11)$. For $\soc(G)=J_2$ and $He$, we make use of the power maps \cite{ATLAS} and \cite{OnlineAtlas} which tell us that the {\qsres} of order $p$ are powers of each other, and therefore generate the same subgroup. The remaining cases lead to genuine examples; if $\soc(G)=McL$ for example, we can see that $5a$ and $5b$ elements have centralisers of different orders, and so generate non-conjugate subgroups of order five.

\subsection{Tables of Results}
\label{sec:tables}

\begin{table}[h]
    \centering
    \begin{tabular}{|c|c|c||c|c|c|} \hline
         $H$&  class & Notes & $H$& $p$ & Notes \\ \hline
         $2.B$                              &  $47a$  $47b$   &  GAP  & $59:29$                       & $59a$  $59b$  &   \\
         $2^{2+11+22}.(M_{24}\times S_3)$   &  $23a$  $23b$   &    GAP   & $(A_5 \times U_3(8):3):2$     & $19a$    &    GAP\\
         $3.Fi_{24}$                        &  $29a$          & GAP   & $(L_3(2) \times S_4(4):2).2$  & $17a$    &  GAP\\
         $S_3 \times Th$                    &  $31a$  $31b$   & GAP   & $L_2(71)$                     & $71a$  $71b$    &  GAP \\
         $41:40$                            &  $41a$          &  GAP  &                               &       &  \\ \hline
    \end{tabular}
    \caption{Maximal subgroups $H$ of the Monster group containing a {\qsre} belonging to the class given in the `class' column.}
    \label{tab:M}
\end{table}

\begin{table}[h]
\centering

\begin{tabular}{|l | l || l|l| }  \hline \underline{$G$}, $H$ & class & \underline{$G$}, $H$  & class \\
\hline \hline
\underline{$ON$}                &                           & \underline{$J_3$}               &  \\
$L_3(7).2$                      & $7b$ $19a$--$c$    & $L_2(16).2$                     & $5a$ $5b$  \\
$L_3(7).2$                          & $7b$ $19a$--$c$    & $L_2(19)$                       & $19a$ $19b$  \\
$J_1$                           & $11a$ $19a$--$c$   & $L_2(19)$                          & $19a$ $19b$  \\
$(3^2:4 \times A_6).2$                          & $5a$                      & $L_2(17)$                       & $17a$ $17b$  \\
$L_2(31)$                       & $31a$ $31b$               & $(3 \times A_6).2$                          & $5a$ $5b$  \\
$L_2(31)$                          & $31a$ $31b$               & $3^2.3^{1+2}:8$                 & $3b$  \\ \hline

\underline{$ON.2$}              &                           & \underline{$J_1$}             &  \\
$J_1\times 2 $                  & $11a$ $19a$--$c$   & $19:6$                        & $19a$--$c$  \\
$(3^2:4\times A_6).2^2$         & $5a$                      & $11:10$                       & $11a$  \\
$7^{1+2}_+:(3\times D_{16})$    & $7b$                      & $D_6\times D_{10}$            & $3a$ $5a$ $5b$   \\
$31:30$                         & $31a$                     & $7:6$                         & $7a$  \\ \hline

$\underline{J_3.2}$           &                           & \underline{$J_4$}             &  \\
$L_2(16).4$                   & $5a$                 & $2^{3+12}.(S_5 \times L_3(2))$                        & $7a$ $7b$ $5a$  \\
$ L_2(17)\times 2$            & $17a$ $17b$               & $11_+^{1+2}:(5\times 2S_4)$   & $11a$ $11b$  \\
$(3\times M_{10}):2$          & $5a$               & $L_2(32).5$                   & $31a$--$c$  \\
$3^2.3^{1+2}:8.2$             & $3b$               & $L_2(23).2$                   & $23a$  \\
$19:18$                          & $19a$                 & $29:28$                        & $29a$  \\
                                &                       & $43:14$                       & $43a$--$c$  \\
&& $37:12 $                      & $37a$--$c$  \\ \hline
\end{tabular}
    \caption{Almost simple groups $\underline{G}$  with socle a sporadic simple group with maximal subgroup $H$ such that $H$ contains a {\qsre} belonging to class indicated in the class column.}
    \label{tab:spor1}
\end{table}

\begin{table}
\centering

\begin{tabular}{ | l| l || l | l|| l| l|}   \hline    \underline{$G$}, $H$ & class & \underline{$G$}, $H$  & class& \underline{$G$}, $H$ & class \\
\hline \hline
\underline{$M_{11}$}        &                   & \underline{$M_{24}$}       &               & \underline{$Co_2$} &  \\
$A_6.2_3$                   & $5a$              & $M_{23}$                   & $23a$ $23b$   & $U_6(2).2$  & $11a$ \\
$L_2(11)$                   & $11a$ $11b$       & $M_{22}.2$                 & $11a$         & $2^{10}:M_{22}:2$  & $11a$ \\
$3^2:Q_8.2$                 & $3a$              & $M_{12}.2$                 & $11a$         & $2^{1+8}:S_6(2)$  & $7a$ \\
$A_5.2$                     & $5a$              & $2^6:3.S_6$                & $5a$          & $HS.2$  & $11a$ \\
                            &                   & $L_3(4).3.2_2$             & $7a$ $7b$     & $M_{23}$  & $23a$ $23b$ \\
\underline{$M_{12}$}        &                   & $2^6:(L_3(2)\times S_3)$   & $7a$ $7b$     & $5^{1+2}:4S_4$  & $5a$ \\
$M_{11}$                    & $11a$ $11b$       & $L_2(23)$                  & $23a$ $23b$   &  &  \\
$M_{11}$                    & $11a$ $11b$       &                            &               & \underline{$Co_3$} &  \\
$A_6.2^2$                   & $5a$              & \underline{$HS$}           &               & $McL.2$  & $5a$ $11a$ $11b$ \\
$A_6.2^2$                   & $5a$              & $M_{22}$                   & $11a$ $11b$   & $M_{23}$  & $23a$ $23b$ \\
$L_2(11)$                   & $11a$ $11b$       & $U_3(5).2$             & $5a$ $5c$ $7a$    & $3^5:(2\times M_{11})$  & $11a$ $11b$ \\
$2\times S_5$               & $5a$              & $U_3(5).2$                 & $5a$ $5c$ $7a$    & $U_3(5).3.2$  & $5a$ \\
                            &                   & $L_3(4).2_1$           & $7a$              & $L_3(4).D_{12}$  & $7a$ \\
\underline{$M_{12}.2$}      &                   & $A_8.2$                & $7a$              & $2\times M_{12}$  & $11a$ $11b$ \\
$L_2(11).2 $                & $11a$             &$ M_{11}$               & $11a$ $11b$       & $S_3\times L_2(8).3$  & $7a$ \\
$L_2(11).2 $                & $11a$             & $ M_{11}$                 & $11b$ $11a$       &  &  \\
$(2^2\times A_5):2$         & $5a$              &                        &                   & \underline{$McL$} &  \\
                            &                   & \underline{$HS.2$}     &                   & $M_{22}$  & $11a$ $11b$ \\
\underline{$M_{22}$}        &                   & $M_{22}.2$                & $11a$             & $M_{22}$  & $11a$ $11b$ \\
$L_3(4)$                    & $7a$ $7b$         &$ L_3(4).2^2$           & $7a$              &$ L_3(4).2_2 $ & $7a$ $7b$ \\
$A_7$                       & $5a$ $7a$ $7b$    & $S_8\times 2 $          & $7a$              &$ 2.A_8 $ & $7a$ $7b$ \\
$A_7$                       & $5a$ $7a$ $7b$    & $5^{1+2}:[2^5]$               & $5a$ $5c$         & $2^4:A_7$  & $7a$ $7b$ \\
$2^4:S_5$                   & $5a$              &                        &                   & $2^4:A_7$  & $7a$ $7b$ \\
$2^3:L_3(2)$                & $7a$ $7b$         & \underline{$J_2$}      &                   & $M_{11}$  & $11a$ $11b$ \\
$A_6.2_3 $                  & $5a$              & $3.A_6.2_2$            & $3a$              & $5^{1+2}:3:8$  & $5a$ $5b$ \\
$L_2(11)$                   & $11a$ $11b$       & $L_3(2).2$             & $7a$              &  &  \\
                            &                   & $5^2:D_{12}$           & $5c$ $5d$         & \underline{$McL.2$} &  \\
\underline{$M_{22}.2 $}     &                   &                        &                   & $L_3(4).2^2$  & $7a$ \\
$L_3(4).2_2$                & $7a$ $7b$         & \underline{$J_2.2$}    &                   & $ 2.A_8.2 $  & $7a$ \\
$2^5.S_5$                   & $5a$              & $3.A_6.2^2$            & $3a$              & $2\times M_{11}$  & $11a$ $11b$ \\
$2\times 2^3:L_3(2)$        & $7a$ $7b$         & $L_3(2).2\times 2$     & $7a$              & $5^{1+2}:(24:2)$  & $5a$ $5b$ \\
$A_6.2^2 $                  & $5a$              & $5^2:(4\times S_3) $   & $5b$              &  &  \\
$L_2(11).2$                 & $11a$             &                        &                   & \underline{$Suz$} &  \\
                            &                   & \underline{$Co_1$}     &                   & $G_2(4)$  & $13a$ $13b$ \\
\underline{$M_{23}$}        &                   & $Co_2$                 & $23a$ $23b$       & $J_2.2$  & $5b$ \\
$M_{22}$                    & $11a$ $11b$       & $3.Suz.2 $             & $11a$             & $(A_4\times L_3(4)):2$  & $7a$ \\
$L_3(4).2_2$                & $7a$ $7b$         & $2^{11}:M_{24} $       & $23a$ $23b$       & $M_{12}.2 $ & $11a$ \\
$2^4:A_7$                   & $7a$ $7b$         & $Co_3 $                & $23a$ $23b$       & $L_3(3).2 $ & $13a$ $13b$ \\
$A_8$                       & $5a$              & $(A_4\times G_2(4)):2 $ & $13a$            & $L_3(3).2 $ & $13a$ $13b$ \\
$M_{11}$                    & $11a$ $11b$       &                        &                   &  &  \\
$2^4:(3\times A_5).2$       & $5a$              &                        &                   &  &  \\
$23:11$                     & $23a$ $23b$       &                        &                   &  &  \\
\hline
\end{tabular}
    \caption{Almost simple groups $\underline{G}$  with socle a sporadic simple group with maximal subgroups $H$ such that $H$ contains a {\qsre} belonging to class indicated in the class column.}
    \label{tab:spor2}
\end{table}

\begin{table}
\centering

\begin{tabular}{|l | l || l | l|} \hline  \underline{$G$}, $H$ & class & \underline{$G$}, $H$  & class \\
\hline \hline
\underline{$Suz.2$}                     &                   &       \underline{$Fi_{22}.2$}                    &  \\
$G_2(4).2$                   & $13a$             & $2.U_6(2).2$                       & $11a$ \\
$J_2.2\times 2$              & $5b$              & $O_8^+(2):S_3\times 2$               & $5a$ $7a$ \\
$(A_4\times L_3(4):2_3):2$  & $7a$               & $S_3\times U_4(3).(2^2)_{122}$     & $7a$ \\
$M_{12}.2\times 2$               & $11a$             & ${}^2F_4(2)'.2$                         & $13a$ \\
                            &                   & $G_2(3).2$                           & $13a$ \\
\underline{$He$}                          &                   &                                   &  \\
$S_4(4).2$                     & $17a$ $17b$       & \underline{$Fi_{24}'$}                             &  \\
$7^{1+2}:(S_3\times 3)$        &  $7d$   $7e$      & $Fi_{23}$                              &  $17a$   $23a$   $23b$  \\
                            &                   & $2^{11}.M_{24}$                             &  $23a$   $23b$  \\
\underline{$He.2$}                        &                   & $2^2.U_6(2).3.2$                     & $11a$ \\
$S_4(4).4$                     &  $17a$            & $He.2$                              &  $17a$  \\
$7^{1+2}:(S_3\times 6)$        & $7c$              & $He.2$                            &  $17a$  \\
                            &                   & $(3^2:2\times G_2(3)).2$             & $13a$ \\
\underline{$HN$}                          &                   & $29:14$                          &  $29a$   $29b$  \\
$A_{12}$                         & $7a$              &                                   &  \\
$2.HS.2 $                     & $11a$             & \underline{$Fi_{24}$}                              &  \\
$U_3(8).3_1$                   &  $19a$   $19b$    & $2\times Fi_{23}$                      &  $17a$   $23a$   $23b$  \\
                            &                   & $2^{12}.M_{24}$                          &  $23a$   $23b$  \\
\underline{$HN.2$}                       &                   & $2^2.U_6(2):S_3\times 2$              & $11a$ \\
$A_{12}.2$                       & $7a$              & $(S_3\times S_3\times G_2(3)):2$       & $13a$ \\
$4.HS.2$                      & $11a$             & $29:22$                              &  $29a$  \\
$U_3(8).6$                     &  $19a$            &                                   &  \\
                            &                   & \underline{$B$}                                 &  \\
\underline{$Th$}                          &                   & $2.{}^2E_6(2).2$                        &  $19a$  \\
$U_3(8).6$                     &  $19a$            & $2^{1+22}.Co_2$                      &  $23a$   $23b$  \\
$(3\times G_2(3)):2$           & $13a$             & $Th$                                &  $31a$   $31b$  \\
$L_2(19).2$                    &  $19a$            & $(2^2 \times  F_4(2)).2$             &  $17a$  \\
$31:15$                   &  $31a$   $31b$    & $S_4\times {}^2F_4(2)$                & $13a$ \\
                            &                   & $A_5.2\times M_{22}.2$                  & $11a$ \\
\underline{$Fi_{22}$}                        &                   & $L_2(31)$                            &  $31a$   $31b$  \\
$2.U_6(2)$                     & $11a$ $11b$       & $47:23$                               & $47a$ $47b$ \\
$O_7(3)$                       & $13a$ $13b$       &                                   &  \\
$O_7(3)$                      & $13a$ $13b$       & \underline{$Ly$}                                &  \\
$O_8^+(2).3.2$                  & $5a$ $7a$         & $G_2(5)$                             &  $31a$--$e$     \\
$S_3\times U_4(3).2_2$          & $7a$              & $3.McL.2$                           & $11a$ $11b$ \\
${}^2F_4(2)'$                     & $13a$ $13b$       & $2.A_{11}$                             & $7a$ \\
$A_{10}.2$                       & $7a$              & $67:22$                         & $67a$--$c$  \\
$A_{10}.2$                       & $7a$              & $37:18$                          & $37a$  $37b$  \\
                            &                   &                                   &  \\
\underline{$Ru$}                          &                   &                                   &  \\
$(2^2\times Sz(8)):3$         & $7a$ $13a$        &                                   &  \\
$L_2(29)$                      &  $29a$   $29b$    &                                   &  \\
\hline
\end{tabular}
    \caption{Almost simple groups $\underline{G}$  with socle a sporadic simple group with maximal subgroups $H$ such that $H$ contains a {\qsre} belonging to class indicated in the class column.}
    \label{tab:spor3}
\end{table}

\newpage

\end{document}